\documentclass{article}
\usepackage{booktabs}
\usepackage{multirow}
\usepackage{amsmath}
\usepackage{geometry}
\usepackage{amsfonts}
\usepackage{amsthm}
\usepackage{amssymb}
\geometry{margin=1in}
\usepackage[dvipsnames]{xcolor}
\usepackage[shortlabels]{enumitem}
\usepackage{tikz-cd}

\usepackage{hyperref}
\usepackage{cleveref}

\usepackage[normalem]{ulem}

\title{On $7$-adic Galois representations for elliptic curves over $\Q$}
\author{Lorenzo Furio and Davide Lombardo}
\date{}

\newtheorem{theorem}{Theorem}
\newtheorem{conjecture}[theorem]{Conjecture}
\newtheorem{corollary}[theorem]{Corollary}

\newtheorem{lemma}[theorem]{Lemma}

\newtheorem{proposition}[theorem]{Proposition}

\theoremstyle{definition}
\newtheorem{definition}[theorem]{Definition}
\newtheorem{remark}[theorem]{Remark}

\newtheorem{question}[theorem]{Question}
\newtheorem{program}[theorem]{Program}

\newcommand{\N}{\mathbb{N}}
\newcommand{\Z}{\mathbb{Z}}
\newcommand{\Q}{\mathbb{Q}}

\newcommand{\F}{\mathbb{F}}
\newcommand{\OK}{\mathcal{O}_K}
\newcommand{\Gal}{\operatorname{Gal}}

\newcommand{\Aut}{\operatorname{Aut}}
\newcommand{\GL}{\operatorname{GL}}

\newcommand{\pRep}{\operatorname{Im}\rho_{E,p}}
\newcommand{\pnRep}[1]{\operatorname{Im}\rho_{E,p^#1}}

\numberwithin{theorem}{section}

\begin{document}

\maketitle

\begin{abstract}
    In recent years, significant progress has been made on Mazur's Program B, with many authors beginning a systematic classification of all possible images of $p$-adic Galois representations attached to elliptic curves over $\Q$. Currently, the classification is only complete for $p \in \{2,3,13,17\}$. The main difficulty for other primes arises from the need to understand elliptic curves whose mod-$p^n$ Galois representations are contained in the normaliser of a non-split Cartan subgroup. Equivalently, this amounts to determining the rational points on the modular curves $X_{ns}^+(p^n)$.
    Here, we consider the case $p=7$ and show that the modular curve $X_{ns}^+(49)$, of genus 69, has no non-CM rational points.
    To achieve this, we establish a correspondence between the rational points on $X_{ns}^+(49)$ and the primitive integer solutions of the generalised Fermat equation $a^2 + 28b^3 = 27 c^7$, the resolution of which can be reduced to determining the rational points of several genus-three curves. Furthermore, we reduce the complete classification of $7$-adic images to the determination of the rational points of a single plane quartic.
\end{abstract}

\section{Introduction}

Let $K$ be a number field and let $E$ be an elliptic curve defined over $K$. For every positive integer $N$, the action of the absolute Galois group of $K$ on the $N$-torsion points of $E$ defines a representation
\begin{equation*}
	\rho_{E,N}: \Gal(\overline{K}/K) \to \Aut(E[N]) \cong \GL_2\left({\Z}/{N\Z}\right).
\end{equation*}
For a fixed prime $p$, we can restrict to values of $N$ of the form $p^n$ and take the limit over $n$. We then obtain the $p$-adic representation
\begin{equation*}
	\rho_{E,p^\infty}: \Gal(\overline{K}/K) \to \Aut(T_pE) \cong \GL_2(\Z_p),
\end{equation*}
where $T_pE = \varprojlim E[p^n]$ is the $p$-adic Tate module of $E$.
In 1972, Serre \cite{serre72} proved his celebrated open image theorem, which states that if the elliptic curve $E$ does not have (potential) complex multiplication, then the representation $\rho_{E,p^\infty}$ is surjective for almost all primes $p$. He actually proved a stronger statement: if $E$ does not have complex multiplication, the image of the  \textit{adelic representation}
\begin{equation*}
	\rho_E = \prod_{p \text{ prime}} \rho_{E,p^\infty} : \Gal(\overline{K}/K) \to \prod_{p \text{ prime}} \Aut(T_pE) \cong \GL_2(\widehat{\Z})
\end{equation*}
is open; equivalently, the image of $\rho_E$ has finite index in $\GL_2(\widehat{\Z})$.
In the same paper, Serre asked the following question.

\begin{question}[Serre's uniformity question]\label{question: uniformity for Q, mod-p version}
	Let $K$ be a number field. Does there exist a constant $N$, depending only on $K$, such that for every non-CM elliptic curve ${E}/{K}$ and for every prime $p>N$ the residual representation $$\rho_{E,p} : \Gal\left({\overline{K}}/{K}\right) \to \Aut(E[p]) \cong \operatorname{GL}_2(\F_p)$$ is surjective?
\end{question}

Although this problem has been extensively studied, the question remains open even in the case $K=\Q$. In this latter case, it is conjectured that the answer is affirmative, with the best possible constant being $N=37$.

Over the years, many partial results have been proven towards resolving Question \ref{question: uniformity for Q, mod-p version}. When the representation $\rho_{E,p}$ is not surjective, its image must be contained in a maximal subgroup of $\GL_2(\F_p)$. The classification of maximal subgroups of $\GL_2(\F_p)$ shows that they are of three types: some so-called `exceptional' subgroups, the Borel subgroups, and the normalisers of (split or non-split) Cartan subgroups.
Serre showed \cite[§8.4, Lemma 18]{serre81} that the image of $\rho_{E, p}$ cannot be contained in an exceptional subgroup for $p>13$.
Mazur \cite{mazur78} later proved that there are no isogenies of prime degree $p$ between non-CM elliptic curves over $\mathbb{Q}$ for $p>37$: this is equivalent to the statement that for $p>37$ the image of $\rho_{E, p}$ is not contained in a Borel subgroup. 
More recently, Bilu and Parent developed their version of Runge's method for modular curves \cite{bilu11runge}, which allowed them to prove that $\operatorname{Im}\rho_{E,p}$ is not contained in the normaliser of a split Cartan for sufficiently large $p$ \cite{bilu11split}. The result was then sharpened by Bilu-Parent-Rebolledo \cite{bilu13}, who showed that the same statement holds for every $p \geq 11$, with the possible exception of $p=13$.
Finally, the result was extended to also cover the prime $p=13$ by means of the so-called \textit{quadratic Chabauty} method \cite{balakrishnan19}.

Thus, for $p>37$, the only remaining possibility is that $\operatorname{Im} \rho_{E,p}$ is contained in the normaliser of the non-split Cartan, for which we only have partial results \cite{zywina15, lefournlemos21, furiolombardo23}. We will denote this group by $C_{ns}^+(p)$ (see Definitions \ref{def: p-adic groups} and \ref{def:cartan}).

A few years after Serre's foundational article \cite{serre72}, in the same spirit as Question \ref{question: uniformity for Q, mod-p version}, Mazur proposed his famous ``Program B''.

\begin{program}[Mazur's Program B]\label{program: B}
    Given a number field $K$ and a subgroup $G$ of $\GL_2(\widehat{\Z})$, classify all elliptic curves $E/K$ whose associated adelic Galois representation maps $\Gal(\overline{K}/K)$ into $G$.
\end{program}

If Question \ref{question: uniformity for Q, mod-p version} has an affirmative answer, the next natural step towards Mazur's Program B is to classify all the possible $p$-adic images for primes $p \le N$.
In the case $K=\Q$, substantial advances in this direction were made by Rouse--Zureick-Brown \cite{rzb15}, Sutherland--Zywina \cite{sutherlandzywina17}, and Rouse--Sutherland--Zureick-Brown \cite{rszb22}, who conducted an extensive study of modular curves of small level. Indeed, it is well known that elliptic curves with prescribed Galois representations on torsion points correspond to rational points on modular curves.
Their results are summarised in \cite[Theorem 1.6]{rszb22} as follows.

For a prime $p$ and an open subgroup $G < \GL_2(\Z_p)$, denote by $X_G$ the corresponding modular curve in the following sense: the group $G$ has finite index in $\GL_2(\Z_p)$, hence there exists a (minimal) positive integer $n$ such that $G \supset I + p^nM_2(\Z_p)$. Setting $\overline{G} = G \pmod {p^n}$ we have $\frac{\GL_2(\Z_p)}{G} \cong \frac{\GL_2(\Z/p^n\Z)}{\overline{G}}$; we then define $X_G := X_{\overline{G}}$.
We will refer to the modular curves classified in \cite{rszb22} by the labels given there, which we call \textit{RSZB labels}.

\begin{theorem}[Rouse--Sutherland--Zureick-Brown]\label{thm: rszb}
    Let $p$ be a prime, let $E/\Q$ be an elliptic curve without complex multiplication, and let $G:=\operatorname{Im}\rho_{E,p^\infty}$. Exactly one of the following is true:
    \begin{enumerate}[(i)]
        \item the modular curve $X_G$ is isomorphic to $\mathbb{P}^1$ or an elliptic curve of rank one, in which case $G$ is identified in \cite[Corollary 1.6]{sutherlandzywina17} and $\langle G, -I \rangle$ is listed in \cite[Tables 1--4]{sutherlandzywina17};
        \item the modular curve $X_G$ has finitely many non-CM rational points, and $G$ is listed in \cite[Table 1]{rszb22} together with the non-CM rational points of $X_G$ (expressed as $j$-invariants or models of the corresponding elliptic curves);
        \item $G \pmod {p^n}$ is contained in $C_{ns}^+(p^n)$ for some $p^n \in \{3^3, 5^2, 7^2, 11^2\} \cup \{p \text{ prime} \mid p \ge 19\}$, with $C_{ns}^+(p^n)$ as in Definitions \ref{def: p-adic groups} and \ref{def:cartan};
        \item $G$ is a subgroup of one of the groups with RSZB label \verb|49.147.9.1| or \verb|49.196.9.1|.
    \end{enumerate}
\end{theorem}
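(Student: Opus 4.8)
The plan is to reduce the statement to a finite — if very large — computation in the lattice of open subgroups of $\GL_2(\Z_p)$. For each prime $p$ and each exponent $n$, one enumerates, up to $\GL_2(\Z_p)$-conjugacy, the open subgroups $G \le \GL_2(\Z_p)$ of level dividing $p^n$; since such a $G$ is determined by its image in $\GL_2(\Z/p^n\Z)$, each level is a finite group-theoretic computation, and the resulting subgroups form a directed lattice under inclusion. This lattice is the backbone of the argument: a rational point of $X_G$ maps to a rational point of $X_{G'}$ whenever $G$ is conjugate into $G'$, so information about rational points propagates upwards. One keeps only the subgroups that could equal $\operatorname{Im}\rho_{E,p^\infty}$ for some non-CM $E/\Q$: such a $G$ satisfies $\det G = \Z_p^\times$, contains the image of a complex conjugation (an element of determinant $-1$ and order dividing $2$), and is admissible in the precise sense of \cite{sutherlandzywina17, rszb22} — the condition governing whether a non-CM, non-cuspidal point of $X_G$ lifts to an elliptic curve with image exactly $G$ rather than a proper conjugate. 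To each such $G$ one attaches the modular curve $X_G$ with its $j$-map $X_G \to X(1) = \mathbb{P}^1_j$, computes its genus, and produces an explicit model.

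One then classifies $X_G(\Q)$ by genus. If $X_G$ has genus $0$ with a rational point, or genus $1$ with Mordell--Weil rank one, it has infinitely many rational points; one shows the level of any such $G$ is bounded, placing these in the finite explicit list of \cite{sutherlandzywina17} — case (i). For every other admissible $G$, the curve $X_G$ has genus $\ge 2$, or genus $1$ and rank $0$, or genus $0$ with no rational point, hence $X_G(\Q)$ is finite (by Faltings' theorem in the first case), and the task is to compute it. Here one deploys a standard toolbox: local obstructions at small primes; rank and torsion computations for genus-$1$ curves; Chabauty--Coleman together with the Mordell--Weil sieve when the Jacobian has rank strictly below the genus; quadratic (and occasionally higher) Chabauty when it does not; and, pervasively, the lattice relations, which let one recover $X_G(\Q)$ as the rational points in the fibres of a map $X_G \to X_{G'}$ for a well-chosen $G' \supseteq G$ whose rational points are already known. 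The finitely many CM $j$-invariants over $\Q$ and the cusps are then removed, leaving the non-CM non-cuspidal points — case (ii), with $G$ and these points tabulated in \cite[Table 1]{rszb22}.

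The content of the theorem is the description of the $G$ for which no method in the toolbox determines $X_G(\Q)$. For every prime $p$ the obstruction is that the Jacobian of $X_G$ has Mordell--Weil rank at least its genus, so Chabauty-type methods do not apply directly; this occurs precisely when $G$ lies, at some prime-power level $p^n$, inside the normaliser $C_{ns}^+(p^n)$ of a non-split Cartan subgroup, for $p^n \in \{3^3, 5^2, 7^2, 11^2\}$ or $p$ prime with $p \ge 19$ — case (iii) — and, for $p = 7$ only, also when $G$ is contained in one of the two subgroups of level $49$ and genus $9$ with RSZB labels \verb|49.147.9.1| and \verb|49.196.9.1| — case (iv). That the four cases are mutually exclusive and exhaustive over all admissible $G$ is a bookkeeping verification on the enumerated lattice. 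The main obstacle is the determination of $X_G(\Q)$ for the hundreds of genus-$\ge 2$ modular curves involved: one must obtain explicit models and the full Mordell--Weil group of each relevant Jacobian, carry out Chabauty--Coleman, the Mordell--Weil sieve, or quadratic Chabauty successfully in each case, and — crucially — prove that the curves on which all of this fails are exactly those in cases (iii) and (iv), attacking each curve at the most favourable level in the lattice so that the residual list is as small as claimed.
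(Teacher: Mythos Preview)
The paper does not prove this theorem at all: it is quoted as a result of Rouse--Sutherland--Zureick-Brown, with the explicit attribution ``Their results are summarised in \cite[Theorem 1.6]{rszb22} as follows.'' There is therefore no proof in the paper to compare your proposal against. Your outline is a reasonable high-level sketch of the strategy actually carried out in \cite{rszb22} --- enumeration of admissible subgroups, computation of models of the associated modular curves, determination of rational points genus-by-genus via local methods, Chabauty--Coleman, Mordell--Weil sieve, etc., and identification of the residual cases --- but verifying its accuracy requires consulting \cite{rszb22} itself, not the present paper.

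One small caution about your sketch: the sentence ``For every prime $p$ the obstruction is that the Jacobian of $X_G$ has Mordell--Weil rank at least its genus'' is not uniformly correct. This rank-equals-genus phenomenon is indeed what happens for many of the non-split Cartan curves at prime level, but for the higher prime-power levels in case (iii) (e.g.\ $X_{ns}^+(49)$, of genus $69$) and for the two level-$49$ curves in case (iv), the obstruction is rather that the genus and the complexity of the models put the curves beyond the reach of the computational methods available at the time, not that a Chabauty-type rank condition provably fails.
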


Recently, the rational points on the modular curve $X_{ns}^+(27)$ have been determined by means of the quadratic Chabauty method \cite{balakrishnan25}, and hence the $3$-adic Galois representations attached to elliptic curves over $\Q$ have been completely classified.

In this article, we study $7$-adic representations and attempt to complete their classification. As \Cref{thm: rszb} shows, this is equivalent to determining the rational points on the modular curves $X_{ns}^+(49)$, \verb|49.147.9.1|, and \verb|49.196.9.1|.
From now on, we will use the notation $X_{ns}^\#(49)$ and $X_{sp}^\#(49)$ to denote the modular curves with RSZB labels \verb|49.147.9.1| and \verb|49.196.9.1|, respectively.
Both of these curves have genus $9$, while the curve $X_{ns}^+(49)$ has genus $69$. As a consequence, determining the rational points on $X_{ns}^+(49)$ appears \textit{a priori} to be the major obstacle in classifying $7$-adic representations. Nevertheless, we can prove the following theorem, which is the main result of this paper.

\begin{theorem}\label{thm: Cns+(49)}
    The modular curve $X_{ns}^+(49)$ has exactly $7$ rational points, all of which are CM.
\end{theorem}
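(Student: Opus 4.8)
The genus of $X_{ns}^+(49)$ is $69$, which puts a direct application of Chabauty--Coleman or quadratic Chabauty out of reach; instead the plan is to exploit the tower above $X_{ns}^+(7)$. First I recall the modular interpretation: a rational point of $X_{ns}^+(49)$ corresponds --- up to finitely many exceptions coming from cusps and CM points, treated separately --- to a non-CM elliptic curve $E/\Q$ whose mod-$49$ image lies in a conjugate of $C_{ns}^+(49)$, and hence whose mod-$7$ image lies in $C_{ns}^+(7)$. Since $X_{ns}^+(7)$ has genus $0$ and a rational point, it is isomorphic to $\mathbb{P}^1_\Q$; I fix such an isomorphism together with the explicit rational function $j\colon X_{ns}^+(7)\to X(1)$ of degree $21$. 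The curve $X_{ns}^+(49)$ is then a degree-$49$ cover of $\mathbb{P}^1$, and the substance of the argument is to describe this cover arithmetically.

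The key step is to trade the geometry of this degree-$49$ cover for a Diophantine equation. The obstruction to lifting a point of $X_{ns}^+(7)(\Q)$ to $X_{ns}^+(49)(\Q)$ is governed by the arithmetic of $\Q(\sqrt{-7})$, in which $7$ ramifies, together with the ramification of the $j$-line at the elliptic points $j=0$ and $j=1728$ (of orders $3$ and $2$) and at the cusp $j=\infty$. Carrying this out explicitly, one shows that a coordinate on $X_{ns}^+(7)$ realising such a lift forces a specific polynomial expression to equal, after clearing denominators by integers supported on $\{2,3,7\}$, the value of a form of signature $(2,3,7)$; concretely, one obtains a primitive integer solution $(a,b,c)$ of
\begin{equation*}
    a^2 + 28 b^3 = 27 c^7
\end{equation*}
with $abc\neq 0$ and $c\neq\pm 1$. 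Conversely, the trivial and small solutions, together with the cusps, are matched with the CM rational points; determining which CM $j$-invariants (necessarily attached to orders in which $7$ is inert or ramified, e.g. of discriminant $-7,-8,-11,-16,-28,\dots$) actually lift to $X_{ns}^+(49)$ accounts for exactly the $7$ points claimed in the statement.

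It then remains to prove that $a^2+28b^3=27c^7$ has no primitive solution with $abc\neq0$ and $c\neq\pm1$. This is a twist of the generalised Fermat equation of signature $(2,3,7)$, and I would resolve it by the descent strategy of Darmon--Granville and Poonen--Schaefer--Stoll: primitive solutions, graded by finitely many local conditions at the primes $2,3,7$, correspond to rational points on an explicit finite list of twists of a genus-$3$ curve (twists of the Klein quartic $X(7)$). Most of these twists are eliminated by checking local solubility at a handful of small places. For each of the remaining genus-$3$ plane quartics one then determines the rational points: compute the Mordell--Weil group of its Jacobian, and apply Chabauty--Coleman when the rank is small, elliptic-curve Chabauty when the quartic covers an elliptic curve over a quadratic field, the Mordell--Weil sieve, or, where necessary, quadratic Chabauty for plane quartics. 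In each case the rational points should be precisely those corresponding to the excluded trivial solutions, which closes the argument.

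The main obstacle is this last step. Computing provably correct Mordell--Weil groups --- in particular rank lower bounds via descent, and saturation --- for the Jacobians of the genus-$3$ descent curves, and then running a Chabauty-type argument together with a Mordell--Weil sieve to pin down \emph{all} rational points, is where the real work lies; if some descent curve has a simple Jacobian of rank at least $3$ one is forced into the full quadratic-Chabauty machinery for plane quartics, which is delicate both theoretically and computationally. A secondary but genuine difficulty is ensuring that the correspondence of the second step loses no rational points --- that is, that every rational point of $X_{ns}^+(49)$, including any lying over $j\in\{0,1728,\infty\}$ or with unusual reduction, is accounted for --- and verifying that exactly $7$ CM points survive.
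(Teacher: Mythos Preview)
Your overall architecture matches the paper's: push down to $X_{ns}^+(7)$, extract the generalised Fermat equation $a^2+28b^3=27c^7$, and resolve that equation by the Poonen--Schaefer--Stoll method, reducing to rational points on a handful of twists of the Klein quartic. That is exactly what the paper does.

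There is, however, a genuine gap in your second paragraph: the reduction from a rational point on $X_{ns}^+(49)$ to a primitive solution of the Fermat equation. Your account (``the obstruction to lifting\ldots is governed by the arithmetic of $\Q(\sqrt{-7})$'' together with ramification at the elliptic points) does not describe a mechanism that actually produces the equation, and it is not how the paper proceeds. The key input you are missing is a purely local/Galois-theoretic fact, proved earlier by one of the authors: if $\operatorname{Im}\rho_{E,49}\subseteq C_{ns}^+(49)$ then the denominator of $j(E)$ is a $49$-th power. One then writes the $j$-map of $X_{ns}^+(7)$ as $G(x,y)^3/F(x,y)^7$ with $F$ the cubic $x^3-7x^2y+7xy^2+7y^3$, uses a resultant computation (together with the $49$-th-power constraint on the denominator and some care at $2$ and $7$) to deduce $F(x,y)=k z^7$ with $k\in\{1,8\}$, and finally applies an invariant--covariant identity for binary cubic forms (equivalently, the syzygy for the Hessian of a cubic) to convert $F(x,y)=kz^7$ into $a^2+28b^3=27c^7$. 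The field $\Q(\sqrt{-7})$ plays no role in this step; it is $\Q(\zeta_7)^+$ (the splitting field of $F$) that enters.

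Two smaller points. First, for the equation attached to $X_{ns}^+(49)$ the paper never needs quadratic Chabauty: after eliminating most newforms of level $196,392,784$ by inertia arguments at $7$, only three twists $X_{E_1},X_{E_2},X_{E_4}$ of the Klein quartic survive, their Jacobians have rank $1,2,2$, and classical Chabauty--Coleman with a Mordell--Weil sieve suffices. (Rank $3$ appears only for the extra twist $X_{E_3}$, which is relevant to the curves $X_{ns}^\#(49),X_{sp}^\#(49)$, not to $X_{ns}^+(49)$.) Second, your list of CM discriminants is off: $-7$ and $-28$ give orders in which $7$ ramifies, so these do not yield non-split Cartan level structure; the seven CM points on $X_{ns}^+(49)$ correspond to orders of discriminant $-4,-8,-11,-16,-43,-67,-163$, where $7$ is inert.
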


Although this doesn't complete the classification of $7$-adic images, we note that it has the following useful consequence.
\begin{corollary}
    For every non-CM elliptic curve $E/\Q$, the image of the $7$-adic representation is the inverse image in $\GL_2(\Z_7)$ of $\operatorname{Im} \rho_{E, 49} \subseteq \operatorname{GL}_2(\Z/49\Z)$.
\end{corollary}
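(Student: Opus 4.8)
The plan is to obtain the corollary as a formal consequence of \Cref{thm: Cns+(49)} together with the classification recalled in \Cref{thm: rszb}. Put $G:=\operatorname{Im}\rho_{E,7^\infty}\subseteq\GL_2(\Z_7)$. An open subgroup of $\GL_2(\Z_7)$ is always the full preimage of its reduction modulo $7^n$, where $7^n$ is its level, i.e.\ the least power of $7$ with $G\supseteq I+7^nM_2(\Z_7)$; hence the statement to be proved is equivalent to the assertion that the level of $G$ divides $49$.

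I would then run through the four mutually exclusive cases of \Cref{thm: rszb} for the prime $p=7$. In cases (i) and (ii) the group $G$ is one of the finitely many groups recorded explicitly in \cite{sutherlandzywina17} and \cite{rszb22}, each of which has level dividing $49$. In case (iv) the group $G$ is contained in one of the two level-$49$ groups $X_{ns}^\#(49)$ or $X_{sp}^\#(49)$; since neither of these is conjugate into $C_{ns}^+(49)$, the level bound established in \cite{rszb22} again forces the level of $G$ to divide $49$, even though the rational points of these two genus-$9$ curves are not yet known. The clean way to package cases (i), (ii), (iv) is the following consequence of \cite{rszb22}: for a non-CM elliptic curve $E/\Q$, the level of $\operatorname{Im}\rho_{E,7^\infty}$ divides $49$ \emph{unless} $\operatorname{Im}\rho_{E,49}$ is contained in $C_{ns}^+(49)$, which is precisely case (iii).

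Thus the entire argument reduces to excluding case (iii). There $G \pmod{49}$ is contained in $C_{ns}^+(49)$ --- note that for $p=7$ the only element of $\{3^3, 5^2, 7^2, 11^2\} \cup \{p \text{ prime} \mid p \ge 19\}$ that is a power of $7$ is $49$ itself. By the modular interpretation of $X_{ns}^+(49)$ as the modular curve attached to $C_{ns}^+(49)$, the curve $E$ then gives rise to a non-cuspidal, non-CM rational point on $X_{ns}^+(49)$ (non-CM because $j(E)$ is not a CM $j$-invariant). This contradicts \Cref{thm: Cns+(49)}, which asserts that every rational point of $X_{ns}^+(49)$ is CM. Hence case (iii) cannot occur for our $E$, so the level of $G$ divides $49$ and the corollary follows.

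The one step needing real care --- and the only point where one must invoke more than the bare statement of \Cref{thm: rszb} --- is case (iv): one must be sure that no $7$-adic image of a non-CM elliptic curve over $\Q$ contained in $X_{ns}^\#(49)$ or $X_{sp}^\#(49)$ has level divisible by $343$, in spite of our ignorance of the rational points of these two curves. This is exactly the level-$49$ bound of \cite{rszb22}, so no genuinely new work is required beyond \Cref{thm: Cns+(49)}, which is what removes the single remaining exceptional case.
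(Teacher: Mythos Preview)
Your overall strategy is correct and lands on the same endpoint as the paper, but the paper's proof is a one-liner: it simply invokes \Cref{thm: Cns+(49)} together with \cite[Theorem 1.9]{furio24}. The latter is precisely the statement you call ``the clean way to package cases (i), (ii), (iv)'': for a non-CM $E/\Q$, the level of $\operatorname{Im}\rho_{E,7^\infty}$ divides $49$ unless $\operatorname{Im}\rho_{E,49}\subseteq C_{ns}^+(49)$. So the paper skips the case analysis entirely and cites the needed level bound as a black box from \cite{furio24}.

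Your attempt to extract this level bound directly from \Cref{thm: rszb} is where the argument is not airtight. Case (iv) of \Cref{thm: rszb} only asserts that $G$ is a \emph{subgroup} of one of the two level-$49$ groups; nothing in that statement prevents $G$ itself from having level $343$ or higher. You acknowledge this is ``the one step needing real care'' and then assert that ``the level-$49$ bound of \cite{rszb22}'' handles it, but you do not point to a specific result in \cite{rszb22} that says this, and \Cref{thm: rszb} alone does not. The correct reference for this fact is \cite[Theorem 1.9]{furio24}, not \cite{rszb22}. Once you make that substitution, your detour through the four cases collapses to the paper's two-line proof: the level bound from \cite{furio24} reduces everything to excluding case (iii), and \Cref{thm: Cns+(49)} does exactly that.
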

\begin{proof}
    Follows immediately from \Cref{thm: Cns+(49)}, \cite[Theorem 1.9]{furio24}, and \cite[Proposition 1]{bourdon25}.
    Alternatively, the result also follows by combining \Cref{thm: Cns+(49)}, \cite[Theorem 3.14]{furio24}, the classification in \cite[Theorem 1.5]{zywina15}, and the fact that $\frac{|\operatorname{Im}\rho_{E,49}|}{|\operatorname{Im}\rho_{E,7}|} \ge 7^2$. This last property follows from \cite[Lemma 6.2]{furio24} in the non-split Cartan case, and can be proved similarly in the split Cartan case (since $\operatorname{Im}\rho_{E,p^2} \not\subseteq C_{sp}^+(49)$ by \Cref{thm: rszb}).
\end{proof}

To prove \Cref{thm: Cns+(49)}, we first show that each rational point on $X_{ns}^+(49)$ gives rise to a primitive integral solution of the generalised Fermat equation $a^2 + 28b^3 = 27c^7$. We then solve this equation by adapting the strategy employed by Poonen--Schaefer--Stoll \cite{PSS07} to find the primitive solutions of $a^2+b^3=c^7$.
The idea is to show that each solution to this equation gives rise, via a construction involving modularity of elliptic curves over $\Q$, to a rational point on one of finitely many twists of the Klein quartic (i.e.,~the plane curve $x^3y + y^3z + z^3x = 0$). Via this construction, solving the generalised Fermat equation $a^2+28b^3=27c^7$ reduces to the determination of the rational points on a finite number of plane quartics, which in our case can be handled using the Chabauty-Coleman approach.

We also prove that a similar argument applies to the modular curves $X_{ns}^\#(49)$ and $X_{sp}^\#(49)$: the rational points on these two curves give rise to primitive integral solutions of one of the equations $a^2 + 196b^3 = 27 c^7$ and $a^2 + 28b^3 = 27c^7$ (the latter only in the case of $X_{sp}^\#(49)$). By applying again the strategy outlined above, we reduce the resolution of these equations to the determination of the rational points on one further twist of $X(7)$. Unfortunately, we cannot determine the rational points of this twist, but we conjecture that there are exactly $4$.

\begin{conjecture}\label{conj: XE3}
    The set of rational points of the curve
    $$C : \ \ x^4 + 3x^3y - 3x^2yz - 3x^2z^2 + 6xy^3 - 6xy^2z + 3xyz^2 - 2xz^3 + 4y^4 + 2y^3z - 5yz^3 = 0$$
    is $C(\Q) = \{ [0 : 0 : 1], [1 : 1 : 1], [2 : 0 : 1],  [-1 : 0 : 1] \}$.
\end{conjecture}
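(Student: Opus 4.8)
The plan is to prove Conjecture \ref{conj: XE3} by the Chabauty--Coleman method applied to the smooth plane quartic $C$. First I would compute the genus: a smooth plane quartic has genus $3$, so the Jacobian $J = \operatorname{Jac}(C)$ has dimension $3$, and Chabauty--Coleman will apply at a prime $p$ of good reduction provided the Mordell--Weil rank $r$ of $J(\Q)$ satisfies $r \le 2$. The first substantive step is therefore a careful analysis of $J(\Q)$: since $C$ arises (per the discussion above) as a twist of the Klein quartic $X(7)$, its Jacobian should decompose up to isogeny into pieces that are either elliptic curves or abelian surfaces with known modular parametrisations, and one would identify these factors via the Eichler--Shimura correspondence or by matching $L$-functions. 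On each factor one computes the rank by descent (a $2$-descent or, for the elliptic factors, just a call to the standard machinery), aiming to verify $r \le 2$; one also needs the torsion subgroup $J(\Q)_{\mathrm{tors}}$, which is easily bounded by reduction modulo several small primes.

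Assuming $r \le 2$, the second step is the Chabauty--Coleman computation proper. I would choose a prime $p$ of good reduction for $C$ (and not dividing the relevant torsion or the discriminant of the decomposition), and for each of the four known rational points lift a basis of the rank-$(g-r) \ge 1$ space of annihilating differentials in $H^0(C_{\Q_p}, \Omega^1)$; concretely, one finds differentials $\omega$ with $\int_{P_0}^{Q} \omega = 0$ for all $Q \in C(\Q)$, using the known points to pin down the coefficients. Integrating $\omega$ formally in each residue disc then produces a $p$-adic power series whose zeros contain $C(\Q) \cap (\text{that disc})$; bounding the number of zeros via a Newton polygon argument gives an upper bound for $\#C(\Q)$ in each disc, and summing over the residue discs in $C(\F_p)$ yields a global bound. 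If this bound equals $4$ we are done; if it exceeds $4$ in some disc, one either switches to a different prime $p$, or uses the Mordell--Weil sieve (combining information modulo several primes with the group structure of $J(\Q)/p^k J(\Q)$) to rule out the spurious discs, or invokes a second differential / the refinement of Chabauty--Coleman that uses $\min_Q \operatorname{ord}$ of the integrals.

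The main obstacle I expect is the rank computation: showing $\operatorname{rank} J(\Q) \le 2$ for a genus-$3$ Jacobian is exactly the kind of step that blocks such arguments, because a naive $2$-descent on an abelian threefold (or on its isogeny factors) can fail to be conclusive if Sha has nontrivial $2$-part, and one may be forced into a more delicate descent or into using the $L$-function (assuming BSD for the modular factors, which for these small-conductor pieces is known or accessible via Kolyvagin-type results). A secondary difficulty is that even with $r \le 2$ the Coleman bound might not be sharp at the first prime one tries — the curve has several rational points clustered in a way that could put two of them in the same residue disc — so some experimentation with the choice of $p$, together with a Mordell--Weil sieve, is likely to be needed to close the gap. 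I would regard the rank bound as the crux; once it is in hand, the rest is a finite (if laborious) $p$-adic computation that I would expect to succeed, though the honest assessment is that the authors were unable to complete it, which is why this remains a conjecture rather than a theorem.
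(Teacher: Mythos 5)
Your proposed approach is exactly the one the authors used to determine $X_{E_1}(\Q)$, $X_{E_2}(\Q)$, and $X_{E_4}(\Q)$, and it is also the natural first attempt on $X_{E_3}$. However, there is a decisive obstruction that you anticipate might occur but do not actually overcome: the paper computes (\Cref{rmk: rank of Jac XE3}) that $\operatorname{rank}\operatorname{Jac}(X_{E_3})(\Q)=3$, which \emph{equals} the genus. Thus the Chabauty--Coleman condition $r<g$ genuinely fails, not because of an inconclusive $2$-descent or a nontrivial $2$-part of Sha, but because the rank really is $3$: the authors exhibit three independent divisor classes $[D_1],[D_2],[D_3]$ supported on the four known rational points and verify via fake $2$-Selmer that these generate a finite-index subgroup. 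So the crux you flagged --- proving $r\le 2$ --- is false, and the whole Coleman integration step never gets off the ground.

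Since the conjecture is left open in the paper, there is of course no proof to compare against, but the authors discuss (\S5.5) why the obvious remedies also fail: $J(\Q)$ is torsion-free so there are no accessible étale covers over $\Q$; bitangents have degree $28$, so geometric covers are impractical; no nontrivial endomorphism of $J$ is defined over $\Q$; and while over $\Q(\sqrt{-7})$ one picks up an order in $\Q(\sqrt{-7})$ inside $\operatorname{End}(J)$, the restriction of the Rosati involution to the centre is complex conjugation, so this endomorphism is not Rosati-symmetric and hence cannot be fed directly into the quadratic Chabauty machinery. A correct proposal would have to start from the observation that linear Chabauty is excluded and then argue for quadratic Chabauty (perhaps over $\Q(\sqrt{-7})$ or $\Q(\zeta_7)^+$, as in the recent $X_{ns}^+(27)$ work), or exploit the extra Diophantine constraints inherited from the modular-curve problem (e.g.\ that only points with $c=-z^2$ matter); none of these is routine, which is precisely why the statement remains a conjecture.
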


A proof of \Cref{conj: XE3} would lead to a complete classification of the images of $7$-adic Galois representations attached to elliptic curves over $\Q$, as the following result shows.

\begin{theorem}\label{thm: conjectural 7-adic images}
    Let $p$ be a prime, let $E/\Q$ be an elliptic curve without complex multiplication, and let $G:=\operatorname{Im}\rho_{E,p^\infty}$. 
    Assume \Cref{conj: XE3}.
    Exactly one of the following holds:
    \begin{enumerate}[(i)]
        \item the modular curve $X_G$ is isomorphic to $\mathbb{P}^1$ or an elliptic curve of rank one, in which case $G$ is identified in \cite[Corollary 1.6]{sutherlandzywina17} and $\langle G, -I \rangle$ is listed in \cite[Tables 1--4]{sutherlandzywina17};
        \item the modular curve $X_G$ has finitely many non-CM rational points, and $G$ is listed in \cite[Table 1]{rszb22} with the non-CM rational points of $X_G$ (expressed as $j$-invariants or models of the corresponding elliptic curves);
        \item $G \pmod {p^n}$ is contained in $C_{ns}^+(p^n)$ for some $p^n \in \{5^2, 11^2\} \cup \{p \text{ prime} \mid p \ge 19\}$, with $C_{ns}^+(p^n)$ as in Definitions \ref{def: p-adic groups} and \ref{def:cartan}.
    \end{enumerate}
\end{theorem}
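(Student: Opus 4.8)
The plan is to start from Theorem 1.3 (Rouse–Sutherland–Zureick-Brown) and show that, assuming Conjecture 1.6, the three exceptional alternatives in that theorem — the occurrences $p^n \in \{3^3, 7^2\}$ in case (iii), and case (iv) — either disappear or collapse into the remaining cases. Concretely, I would proceed by eliminating, one prime at a time, each modular curve that Theorem 1.3 leaves open and that is not already covered by cases (i)–(iii) of the present statement. There are four such curves: $X_{ns}^+(27)$ (the $p^n = 3^3$ entry of case (iii)), $X_{ns}^+(49)$ (the $p^n = 7^2$ entry), and the two genus-$9$ curves $X_{ns}^\#(49)$ and $X_{sp}^\#(49)$ (the content of case (iv), with RSZB labels \verb|49.147.9.1| and \verb|49.196.9.1|).

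First I would dispose of $X_{ns}^+(27)$: its rational points have been completely determined by Balakrishnan et al.\ \cite{balakrishnan25} via quadratic Chabauty, and all of them are CM; hence for a non-CM curve $E/\Q$ the group $G = \operatorname{Im}\rho_{E,3^\infty}$ is never contained (mod $27$) in $C_{ns}^+(27)$, so the entry $3^3$ is removed from case (iii). Next I would invoke \Cref{thm: Cns+(49)} of this paper: $X_{ns}^+(49)$ has only CM rational points, so the entry $7^2$ is likewise removed from case (iii). It remains to handle $X_{ns}^\#(49)$ and $X_{sp}^\#(49)$. For these I would cite the analysis carried out in the body of the paper, which reduces their rational points to primitive integral solutions of $a^2 + 196 b^3 = 27 c^7$ and $a^2 + 28 b^3 = 27 c^7$, and then — via the modularity/Klein-quartic construction of Poonen–Schaefer–Stoll type — to the rational points of the single plane quartic $C$ of \Cref{conj: XE3}. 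Granting the conjecture, $C(\Q)$ consists of the four listed points; I would then trace each of these four points back through the construction and check that none of them yields a non-CM elliptic curve whose $7$-adic image is a subgroup of \verb|49.147.9.1| or \verb|49.196.9.1| that is not already accounted for by case (i) or (ii) (i.e., that the corresponding $j$-invariants are either CM or give a $7$-adic image whose modular curve is $\mathbb{P}^1$, a positive-rank elliptic curve, or appears in \cite[Table 1]{rszb22}). This bookkeeping step removes case (iv) entirely.

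Assembling these four eliminations with Theorem 1.3 then yields exactly the trichotomy stated: cases (i) and (ii) are inherited verbatim, and case (iii) survives only with $p^n \in \{5^2, 11^2\} \cup \{p \ge 19 \text{ prime}\}$, since $3^3$ and $7^2$ have been removed and case (iv) has been absorbed. The main obstacle in this argument is not any of the modular-curve inputs — those are quoted — but the final bookkeeping for $X_{ns}^\#(49)$ and $X_{sp}^\#(49)$: one must verify that the construction sending solutions of the two generalised Fermat equations to points of $C$ is faithful enough that \emph{all} rational points of these two genus-$9$ curves are recovered (no solutions are lost at primes of bad reduction or at the finitely many ``degenerate'' solutions that the descent naturally throws away), and that the four points of $C(\Q)$ really do pull back only to CM $j$-invariants or to images already in the RSZB tables. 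This is precisely the kind of finite but delicate case check where, in the unconditional \Cref{thm: Cns+(49)}, the authors had the luxury of a Chabauty–Coleman computation to close the argument; here it rests on \Cref{conj: XE3}.
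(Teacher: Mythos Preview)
Your proposal is correct and follows essentially the same route as the paper: start from \Cref{thm: rszb}, eliminate $p^n=3^3$ via \cite{balakrishnan25}, eliminate $p^n=7^2$ via \Cref{thm: Cns+(49)}, and eliminate case (iv) by reducing the rational points of $X_{ns}^\#(49)$ and $X_{sp}^\#(49)$ to the generalised Fermat equations and then to the curve of \Cref{conj: XE3}. The ``delicate bookkeeping'' you flag at the end is exactly the content of \Cref{cor: 7-adic classification} (part 2), whose proof traces the $(\star)$-solutions back through the Thue equations and shows that, assuming \Cref{conj: XE3}, the only surviving $j$-invariant is $j(E)=0$, hence CM; so your worry about lost solutions or non-CM leftovers is already resolved there.
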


\medskip

We now quickly explain the strategy behind the proofs of \Cref{thm: Cns+(49),thm: conjectural 7-adic images}.

\begin{enumerate}[(1)] 

    \item As proved in \cite[Proposition 5.13]{furio24}, for every elliptic curve $E/\Q$ such that $\operatorname{Im}\rho_{E,p^n} \subseteq C_{ns}^+(p^n)$ for some odd prime power $p^n$, the $j$-invariant $j(E)$ is a rational number whose denominator is a $p^n$-th power, i.e.,~$j(E) = \frac{a}{b^{p^n}}$ for relatively prime integers $a,b$. Similarly, in \Cref{prop: ciuffetti denominators} we prove that if $\operatorname{Im}\rho_{E,7^2} \subseteq G$, where $G$ is the subgroup corresponding to one of the modular curves $X_{ns}^\#(49)$ or $X_{sp}^\#(49)$, the denominator of $j(E)$ is a $49$-th power.

    \item The modular curve $X_{ns}^+(49)$ admits a map to the curve $X_{ns}^+(7)$, which is a genus-0 modular curve with a $j$-map to $X(1)$ expressed by the rational function $j_{ns}(t) = \frac{g(t)^3}{f(t)^7}$ (see \Cref{thm: j-invariants Zywina}). This means that for every elliptic curve $E/\Q$ such that $\operatorname{Im}\rho_{E,7^2} \subseteq C_{ns}^+(7^2)$ there exists $t \in \mathbb{P}^1(\Q)$ such that $j(E) = \frac{g(t)^3}{f(t)^7}$ is a rational number whose denominator is a $49$-th power. A similar conclusion holds for the modular curves $X_{ns}^\#(49)$ and $X_{sp}^\#(49)$.

    \item Since $g(t)^3$ and $f(t)^7$ have the same degree, we can write $t=\frac{x}{y}$, with $x,y$ coprime integers, and express $j(E)$ as the ratio $\frac{G(x,y)^3}{F(x,y)^7}$, where $F$ and $G$ are two-variable polynomials with integral coefficients, given respectively by the homogenisations of $f$ and $g$. Computing the resultant of $f$ and $g$, we find that the gcd of $F(x,y)$ and $G(x,y)$ is $k \in \{1,8\}$. Since the denominator of $j(E)$ is a $49$-th power, this implies that there exists an integer $z$ such that $F(x,y) = kz^7$. This argument is explained in \Cref{prop: norm equations}.

    \item The equation $F(x,y) = kz^7$ has finitely many solutions, hence it suffices to find them to determine the rational points of $X_{ns}^+(49)$. Some elementary arithmetic arguments allow us to show that every primitive solution to this equation corresponds to a primitive solution of the generalised Fermat equation $a^2 + 28b^3 = 27 c^7$. Applying the same argument to the modular curves $X_{ns}^\#(49)$ and $X_{sp}^\#(49)$ we obtain the Fermat equation $a^2 + 196b^3 = 27 c^7$. This reduction is explained in \Cref{prop: from norm to Fermat} and \Cref{cor: specific norm equations}.

    \item We can now apply the strategy of Poonen, Schaefer, and Stoll to the equation $a^2 + 28b^3 = 27c^7$. This strategy can be seen as a generalisation of the modular proof of Fermat's last theorem. To each solution of the equation $a^2+28b^3=27c^7$ (or its twisted version $a^2 + 196b^3 = 27c^7$) we attach an elliptic curve $E_{(a,b,c)}$ over $\Q$ whose $j$-invariant encodes the solution $(a,b,c)$. Due to the special form of $E_{(a,b,c)}$, level-lowering results imply that the mod-7 representation of $E_{(a,b,c)}$ must arise from one of finitely many modular forms. For the cases of interest, one can show that the modular form giving rise to $E_{(a,b,c)}[7]$ can be taken to be a weight-2 newform $f$ with rational coefficients, so that (by modularity) there is some other elliptic curve $E_f/\Q$ with $E_{(a,b,c)}[7] \cong E_f[7]$. The crucial point here is that there is a finite, known list of forms $f$, and therefore a finite, known list of possible curves $E_f$.

    \item Finally, elliptic curves over $\Q$ whose mod-7 Galois representations are isomorphic to $E_f[7]$ correspond to rational points on suitable twists of the modular curve $X(7)$, which is isomorphic to the Klein quartic (i.e., the plane curve $x^3y + y^3z + z^3x = 0$). Putting everything together, the solution of the generalised Fermat equation of signature $(2, 3, 7)$ can be reduced to the determination of the rational points on finitely many twists of the Klein quartic. Both in \cite{PSS07} and in our case $a^2 + 28b^3 = 27c^7$, this is a task that can be completed by a combination of 2-descent, Mordell-Weil sieve, and Coleman's effective version of Chabauty's method. In the case $a^2 + 196b^3 = 27c^7$, we obtain an additional twist (namely, the curve of \Cref{conj: XE3}) of which we are not able to determine the rational points 
\end{enumerate}

The paper is structured as follows. We begin in \Cref{sect: preliminaries} by reviewing known results on the $p$-adic representations of elliptic curves over $\Q$ and slightly extending them to include the Galois representations associated with the modular curves $X_{ns}^\#(49)$ and $X_{sp}^\#(49)$ (corresponding to step (1) above). In \Cref{sect: ratpts to Fermat}, we carry out steps (2)--(4), showing that each rational point on the modular curves $X_{ns}^+(49)$, $X_{ns}^\#(49)$, and $X_{sp}^\#(49)$ yields a primitive solution to a certain generalised Fermat equation. In \Cref{sect: Fermat to ell curves} we implement step (5), associating with every solution of our Fermat equations suitable elliptic curves over $\Q$. Any such elliptic curve arises from a modular form, and we use several arguments (inertia action, symplectic criteria, global computation of torsion fields...) to reduce the list of relevant modular forms to just three. In \Cref{sect: ell curves to ratpts genus 3} we recall how specific twists of $X(7)$ parametrise elliptic curves over $\Q$ with a given mod-7 Galois representation and write down the finitely many twists that we need to consider (see step (6) above). Finally, in \Cref{sect: genus 3} we complete the proof of \Cref{thm: Cns+(49)} by determining the rational points on all but one of these twists.

\subsection{Possible extensions}

One may wonder whether the methods of this paper apply to other modular curves $X_{ns}^+(p^n)$ with $p \ne 7$ and $n \geq 2$ (by \Cref{thm: rszb}(iii), the most interesting cases are $p^n \in \{3^3, 5^2, 11^2\}$). Unfortunately, it seems that their application is not straightforward for different reasons.

The curve $X_{ns}^+(27)$ admits a map to the modular curve $X_{ns}^+(9)$, which is again isomorphic to $\mathbb{P}^1$. In suitable coordinates, the $j$-map $X_{ns}^+(9) \to \mathbb{P}^1$ is of the form $\frac{G(x,y)}{F(x,y)^9}$ (see for example \cite{baran09}), where $F(x,y)$ is a homogeneous polynomial of degree $3$ (indeed, the degree of $F$ equals the number of cusps of the curve $X_{ns}^+(9)$, which is $3 = [\Q(\zeta_9)^+ : \Q]$). We then obtain two equations $F(x,y) = kz^3$, for $k \in \{1,3\}$. These equations define two elliptic curves: the first one, for $k=1$, has rank $1$, while the second one, for $k=3$, has rank $0$. This implies that there are infinitely many rational points on $X_{ns}^+(9)$ whose denominator is a $27$-th power. We also note that the rational points on $X_{ns}^+(27)$ have recently been determined via the quadratic Chabauty method \cite{balakrishnan25}.

In the case of the curve $X_{ns}^+(25)$, there is again a map to the genus-0 curve $X_{ns}^+(5)$. The $j$-map from this curve to $\mathbb{P}^1$ is of the form $\frac{G(x,y)}{F(x,y)^5}$ (see for example \cite[Theorem 1.4]{zywina15}), where $F$ is a homogeneous polynomial of degree $2$ (which is the number of cusps of $X_{ns}^+(5)$). Imposing that the denominator of the $j$-invariant is a $25$-th power then leads to an equation of the form $f(x,y)=k z^5$, where $f(x,y)$ is now a binary \textit{quadratic} form, and equations of this form can have infinitely many solutions.

Finally, in the case of $X_{ns}^+(121)$, we have a map to $X_{ns}^+(11)$, which is an elliptic curve, making it harder to analyse the $j$-map to $\mathbb{P}^1$.

In a slightly different direction, it should be possible to adapt these ideas to study Cartan structures of level $7p$ for $p > 7$: by \Cref{prop: cartan49 denominator}, if $E/\Q$ has a non-split Cartan structure both at $7$ and at $p$, then the denominator of the $j$-invariant is a $(7p)$-th power. This leads to equations of the form $F(x,y) = k z^p$, where $F(x,y)$ is a (fixed) homogeneous form of degree 3 and $k$ belongs to a small, explicit list of integers. The main result of \cite{MR2999040} shows that in many cases such equations are only solvable for bounded values of $p$; unfortunately, this does not apply here, because $F(1,0)=1$, so $(x,y,z,p)=(1,0,1,p)$ is a solution for all $p$, at least for $k=1$. On the other hand, it is not hard to show that the $abc$ conjecture implies that $F(x,y) = k z^p$ only has trivial solutions for sufficiently large $p$, which leads to the conclusion that (under the $abc$ conjecture) $X_{ns}^+(7p)(\Q)$ consists of CM points for sufficiently large $p$.

\begin{proposition}\label{prop: cartan7p}
    Assume the $abc$ conjecture. For all sufficiently large primes $p$, the only rational points on $X_{ns}^+(7) \times_{X(1)} X_{ns}^+(p)$ are CM points.
\end{proposition}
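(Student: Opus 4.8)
The plan is to follow the blueprint sketched above: translate a rational point of the fibre product into a primitive solution of a single binary-form equation $F(x,y) = kz^p$, use the $abc$ conjecture to bound the height of $(x,y)$ by a constant independent of $p$, and then rule out the finitely many surviving candidates by invoking Serre's open image theorem. Concretely, I would first observe that for a prime $\ell \ge 5$ the cusps of $X_{ns}^+(\ell)$ form a single Galois orbit of size $(\ell-1)/2 > 1$, so neither $X_{ns}^+(7)$ nor $X_{ns}^+(p)$ has a rational cusp, and hence neither does the fibre product. Thus any rational point $(P_1, P_2)$ of $X_{ns}^+(7) \times_{X(1)} X_{ns}^+(p)$ has a finite common $j$-invariant $j_0 \in \Q$, and — unless $j_0 \in \{0, 1728\}$, in which case the point is CM — it corresponds to an elliptic curve $E/\Q$ (well defined up to quadratic twist, because $-I \in C_{ns}^+(\ell)$) with $\operatorname{Im}\rho_{E,7} \subseteq C_{ns}^+(7)$ and $\operatorname{Im}\rho_{E,p} \subseteq C_{ns}^+(p)$. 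By \Cref{prop: cartan49 denominator} the denominator of $j_0$ is then a $(7p)$-th power, while the $j$-map $X_{ns}^+(7) \to X(1)$ of \Cref{thm: j-invariants Zywina} writes $j_0 = G(x,y)^3/F(x,y)^7$ with $x, y$ coprime integers and $G$, $F$ the homogenisations of $g$, $f$; arguing as in \Cref{prop: norm equations} (computing the resultant of $f$ and $g$ to control $\gcd(F(x,y), G(x,y))$) one gets $F(x,y) = kz^p$ for some integer $z$ and some $k$ in a fixed finite set. Crucially, $F$ is a fixed binary cubic form that is separable and has no rational root — its roots are the three distinct, Galois-conjugate, finite cusps of $X_{ns}^+(7)$ (finite because $F(1,0) = 1$) — so in particular $z \ne 0$.

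Next I would bound $H := \max(|x|, |y|)$ using the $abc$ conjecture. Since $F$ is a separable binary cubic, it is a standard consequence of $abc$ that for every $\varepsilon > 0$ there is $c_\varepsilon = c_\varepsilon(F) > 0$ with $\operatorname{rad}(F(x,y)) \ge c_\varepsilon H^{1-\varepsilon}$ for all coprime $(x,y)$ with $H$ large. On the other hand $F(x,y) = kz^p$ gives $\operatorname{rad}(F(x,y)) \le |k|\,|z|$ and $|z|^p = |F(x,y)|/|k| \le C_F H^3$ for a constant $C_F$, hence $\operatorname{rad}(F(x,y)) \le |k|\,(C_F H^3)^{1/p}$. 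Taking $\varepsilon = 1/4$ and comparing the two inequalities yields $H^{3/4 - 3/p} \le |k| \max(C_F, 1)/c_{1/4}$, so for all $p \ge 13$ we have $3/4 - 3/p \ge 27/52 > 0$ and therefore $H \le B$ for a constant $B$ independent of $p$; combined with the finitely many remaining cases of bounded $H$, we conclude that only finitely many pairs $(x,y)$ can occur. For each such pair $F(x,y)$ is a fixed nonzero integer, so as soon as $p > \log_2\bigl(\max |F(x,y)|\bigr)$ the equation $|z|^p = |F(x,y)|/|k|$ forces $|z| \le 1$, i.e. $z = \pm 1$ and $|F(x,y)| = |k|$. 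This leaves only a finite list of possible $j$-invariants $j_0 = G(x,y)^3/F(x,y)^7 \in \Q$.

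Finally I would discard the candidates $j_0$ that are not CM. Fix such a non-CM $j_0$ and one elliptic curve $E_0/\Q$ with $j(E_0) = j_0$. Since $-I \in C_{ns}^+(p)$, the condition $\operatorname{Im}\rho_{E,p} \subseteq C_{ns}^+(p)$ is invariant under quadratic twisting, so $j_0$ lies in the image of $X_{ns}^+(p)(\Q) \to X(1)(\Q)$ if and only if $\operatorname{Im}\rho_{E_0,p} \subseteq C_{ns}^+(p)$. But $E_0$ has no complex multiplication, so by \cite{serre72} the representation $\rho_{E_0,p}$ is surjective for all $p$ larger than some $N(E_0)$, and then $\operatorname{Im}\rho_{E_0,p} \not\subseteq C_{ns}^+(p)$ because $[\GL_2(\F_p) : C_{ns}^+(p)] = p(p-1)/2 > 1$. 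Hence, taking $p$ larger than $13$, than $\log_2\bigl(\max|F(x,y)|\bigr)$, and than $N(E_0)$ for each of the finitely many non-CM candidates, no non-CM rational point of the fibre product survives, which is the claim. I expect the only genuinely delicate input to be the $abc$-based radical lower bound for separable binary cubics (which we are assuming); of the remaining steps, the one requiring some care is the twist-invariance of the non-split Cartan condition in the last paragraph, which is precisely what reduces the problem to a single curve $E_0$ for each candidate $j$-invariant.
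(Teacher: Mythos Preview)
Your proof is correct, but it diverges from the paper's at the $abc$ step. After reaching $F(x,y)=kz^p$ with $F$ the fixed separable binary cubic, you invoke the standard $abc$-consequence $\operatorname{rad}(F(x,y)) \gg_\varepsilon H^{1-\varepsilon}$ for coprime $(x,y)$ and play it off against the upper bound $\operatorname{rad}(F(x,y)) \le |k|\,|z| \ll H^{3/p}$, which bounds $H$ uniformly in $p$. The paper instead pushes one step further along its main storyline: it feeds $F(x,y)=kz^p$ through \Cref{prop: from norm to Fermat} and \Cref{cor: specific norm equations} to obtain a primitive solution of $a^2+28b^3=-27c^p$ (or its $196$-variant) with $c=z^2$, and then applies the $abc$ conjecture directly to the triple $(a^2/D,\;27c^p/D,\;28|b|^3/D)$, bounding $|b|$ and hence forcing $c\in\{0,1\}$ for large $p$. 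Your route is shorter and works verbatim for any separable binary form of degree $\ge 3$, at the cost of quoting the Langevin-type radical bound as a black box; the paper's route keeps the $abc$ input completely elementary (a single explicit triple) and reuses the covariant machinery it has already set up for the main theorem. The endgame---finitely many Thue equations, hence finitely many $j$-invariants, each non-CM one killed for large $p$ by Serre's open image theorem and twist-invariance of the Cartan condition---is the same in both arguments.
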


We will give the proof of this fact in the appendix.
Although conditional on $abc$, this proposition is interesting in that it describes the rational points on an \textit{infinite family} of non-split Cartan modular curves, many of which do have rational (CM) points.
No other result of this flavour seems to be known in the non-split Cartan case.
The method of proof of \Cref{prop: cartan7p} also yields information for fixed values of $p$, and it would be worthwhile to examine specific cases in detail. For example, when $p=5$, the determination of the rational points on $X_{ns}^+(7) \times_{X(1)} X_{ns}^+(5)$ reduces to solving equations of the form $F(x,y) = kz^5$, where $F(x,y)$ is a homogeneous polynomial of degree 3. We point out that in the case $p=3$, the resulting equations define (cones over) genus 1 curves, one of which is an elliptic curve with positive rank. One can check that this elliptic curve is in fact isomorphic to $X_{ns}^+(7) \times_{X(1)} X_{ns}^+(3)$.

By an argument similar to that of \Cref{prop: cartan7p} one can prove, assuming again the $abc$ conjecture, that for sufficiently large $p$ the only rational points of the curve $X_{sp}^+(7) \times_{X(1)} X_{ns}^+(p)$ are CM points. However, a more general and unconditional result was proved by Lemos in \cite{lemos19split}.
Nevertheless, this strategy could be applied to determine rational points on the curves $X \times_{X(1)} X_{ns}^+(p)$, where $X$ is a genus-0 modular curve with $j$-map of the form $\frac{G(x,y)}{F(x,y)}$, where $F$ is a homogeneous polynomial with at least $3$ distinct roots (see for example \cite[Theorem 1]{darmon-granville95}, which guarantees that this is a sufficient condition to ensure finiteness of solutions of the equations $F(x,y) = kz^p$ for $p$ sufficiently large).

\paragraph{Computer-assisted calculations.} Several of the arguments in this paper depend on computer calculations, performed in MAGMA \cite{MR1484478}. All the scripts to verify our claims are available at
\begin{center}
    \url{https://github.com/DavideLombardoMath/7-adic-representations/}.
\end{center}

\paragraph{Acknowledgements.} We thank Mart\'in Azon, Jennifer Balakrishnan, Niels Bruin, Matteo Longo, and Jan Steffen M\"uller for interesting conversations related to the topic of this paper. We are grateful to Abbey Bourdon, Özlem Ejder, Elisa Lorenzo García, \'Elie Studnia, and Andrew Sutherland for their helpful comments on an earlier version of this manuscript and for drawing our attention to relevant references.
The first author is supported by the grant ANR-HoLoDiRibey of the Agence Nationale de la Recherche (PI Gregorio Baldi).
The second author is supported by MUR grant PRIN-2022HPSNCR (funded by the European Union project Next Generation EU) and is a member of the INdAM group GNSAGA.

\section{Preliminaries}\label{sect: preliminaries}

We collect in this section some known results about the mod-$p$ (and especially mod-$7$) images of Galois of elliptic curves over $\Q$.
The following result is part of \cite[Theorem 1.5]{zywina15}.

\begin{theorem}[Zywina]\label{thm: j-invariants Zywina}
    Let $E/\Q$ be an elliptic curve.
    \begin{itemize}
        \item $\operatorname{Im}\rho_{E,7}$ is contained in a Borel subgroup if and only if there exists $t \in \Q^\times$ such that $$j(E) = j_B(t) = \frac{(t^2+245t+2401)^3(t^2+13t+49)}{t^7}.$$
        \item $\operatorname{Im}\rho_{E,7}$ is contained in the normaliser of a split Cartan subgroup if and only if there exists $t \in \Q$ such that $$j(E) = j_{sp}(t) = \frac{t(t+1)^3(t^2-5t+1)^3(t^2-5t+8)^3(t^4-5t^3+8t^2-7t+7)^3}{(t^3-4t^2+3t+1)^7}.$$
        \item $\operatorname{Im}\rho_{E,7}$ is contained in the normaliser of a non-split Cartan subgroup if and only if there exists $t \in \mathbb{P}^1(\Q)$ such that $$j(E) = j_{ns}(t) = \frac{64t^3(t^2+7)^3(t^2-7t+14)^3(5t^2-14t-7)^3}{(t^3-7t^2+7t+7)^7}.$$
    \end{itemize}
\end{theorem}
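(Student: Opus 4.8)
The plan is to read the three statements as assertions about the modular curves $X_0(7)$, $X_{sp}^+(7)$ and $X_{ns}^+(7)$, all of which have genus $0$. Indeed, if $H$ is a Borel subgroup, the normaliser of a split Cartan, or the normaliser of a non-split Cartan of $\GL_2(\F_7)$, the genus of $X_H(7)$ can be computed from the Riemann--Hurwitz formula applied to the forgetful map $j\colon X_H(7)\to X(1)$, using the ramification data read off from the action of $H$ on the cusps and on the elliptic points of orders $2$ and $3$; in each case one gets genus $0$. Moreover each of these curves has a rational point: $X_0(7)$ has rational cusps, and $X_{sp}^+(7)$, $X_{ns}^+(7)$ carry rational CM points (e.g.\ $j=1728$ and $j=8000$ lie on $X_{ns}^+(7)$, since $7$ is inert in $\Q(i)$ and in $\Q(\sqrt{-2})$). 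Hence each $X_H(7)$ is isomorphic to $\mathbb{P}^1_\Q$; fixing a coordinate (Hauptmodul) $t$, the $j$-map becomes a rational function $j_H(t)\in\Q(t)$ whose degree equals $[\GL_2(\F_7):H]$, namely $8$, $28$ and $21$ respectively --- exactly the degrees of the three functions in the statement.

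The heart of the proof is to determine these rational functions explicitly, and this is the step I expect to be the only genuinely laborious one (classical for $X_0(7)$, but substantial for the two Cartan cases). One clean approach exploits the ramification of $j_H$: it is unramified away from $\{0,1728,\infty\}$; over $j=\infty$ the ramification indices are precisely the cusp widths of $X_H(7)$, over $j=1728$ they are $1$ or $2$, and over $j=0$ they are $1$ or $3$. Writing $j_H=N(t)/D(t)$ in lowest terms, this forces $D(t)$ (up to scalars) to be $\prod_{\text{cusps}}(\text{linear factor})^{\text{width}}$, forces $j_H$ to be a cube in $\Q(t)$ away from the order-$3$ elliptic points, and $j_H-1728$ to be a square away from the order-$2$ elliptic points; combined with the location on the $t$-line of a few distinguished points (the cusps and CM points of known $j$-invariant such as $0,1728,8000,\dots$), this pins down $j_H$ up to the residual freedom in the choice of coordinate. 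Equivalently, one may posit an unknown rational function of the correct degree and solve for its coefficients by matching $q$-expansions: the Hauptmodul of $X_0(7)$ is the eta-quotient $h=(\eta(\tau)/\eta(7\tau))^4$, giving the classical formula $j=(h^2+13h+49)(h^2+245h+2401)^3/h^7$, while for $X_{sp}^+(7)$ and $X_{ns}^+(7)$ one can instead descend explicit data from the Klein-quartic model of $X(7)$ with its $\operatorname{PSL}_2(\F_7)$-action, forming the quotient by the image of $H$ and pushing forward the $j$-function.

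Finally one deduces the theorem from the modular interpretation. Each of the three groups $H$ contains $-I$, hence is stable under multiplication by $\pm I$ and therefore by any quadratic character; consequently, for an elliptic curve $E/\Q$ the property ``$\operatorname{Im}\rho_{E,7}$ is contained in some $\GL_2(\F_7)$-conjugate of $H$'' is invariant under quadratic twist, and so depends only on $j(E)$ (all Borel subgroups, all split Cartan subgroups, and all non-split Cartan subgroups being respectively conjugate, so that ``contained in a Borel / split Cartan normaliser / non-split Cartan normaliser'' is unambiguous). A non-cuspidal, non-CM rational point of $X_H(7)$ is exactly the datum of the $j$-invariant of such a curve, so $j(E)$ has the stated property if and only if it lies in the image of $X_H(7)(\Q)\to X(1)(\Q)$, i.e.\ if and only if $j_H(t)=j(E)$ for some $t\in\mathbb{P}^1(\Q)$. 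The restricted domains in the statement ($t\in\Q^\times$ for the Borel case, $t\in\Q$ for the split-Cartan case, all of $\mathbb{P}^1(\Q)$ for the non-split case) merely record which values of $t$ are cusps, and the finitely many CM $j$-invariants involved are checked directly to be attained at the relevant special values of $t$.
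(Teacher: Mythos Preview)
Your approach is correct and aligns with the paper's treatment, but note that the paper does not actually prove this theorem: it is quoted as \cite[Theorem 1.5]{zywina15}, with only a brief remark afterwards that the three functions are precisely the $j$-maps from the genus-$0$ modular curves $X_0(7)$, $X_{sp}^+(7)$, $X_{ns}^+(7)$ to $X(1)$, and that this handles the CM case as well. Your sketch supplies exactly the argument underlying that remark, in considerably more detail than the paper gives.
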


Zywina's theorem is stated for non-CM elliptic curves. However, one can easily verify that CM $j$-invariants also satisfy the conclusion of the statement: indeed, the maps given above are simply the $j$-maps from the modular curves $X_0(7)$, $X_{sp}^+(7)$, $X_{ns}^+(7)$ (which are all isomorphic to $\mathbb{P}^1$, with coordinate $t$) to $X(1)$.

\begin{remark}\label{rmk: extreme values}
    In the case of the function $j_{ns}(t)$ we easily compute
    \[
    j_{ns}(0)=0, \quad j_{ns}(\infty)=8000,
    \]
    which are both CM $j$-invariants. For $j_{sp}$ we have instead $j_{sp}(0)=0$ and $j_{sp}(\infty)=\infty$.
\end{remark}

\begin{lemma}\label{lemma: 7 isogenies}
    Let $E/\Q$ be an elliptic curve. If $E$ admits a rational $7$-isogeny, then $v_2(j(E)) \le 0$.
\end{lemma}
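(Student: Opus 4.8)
The plan is to combine the explicit description of $j$-invariants in the Borel case from \Cref{thm: j-invariants Zywina} (which, as noted right after its statement, applies verbatim to CM curves as well, being just the $j$-map of $X_0(7)$) with a short $2$-adic computation.

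First I would record that $E$ admits a rational $7$-isogeny precisely when $E[7]$ contains a Galois-stable subgroup of order $7$, i.e.\ precisely when $\operatorname{Im}\rho_{E,7}$ lies in a Borel subgroup of $\GL_2(\F_7)$. By \Cref{thm: j-invariants Zywina} there is then some $t\in\Q^\times$ with
\[
j(E)=j_B(t)=\frac{(t^2+245t+2401)^3(t^2+13t+49)}{t^7};
\]
the value $t$ is nonzero because the two cusps of $X_0(7)$ sit at $t=0$ and $t=\infty$, so a non-cuspidal rational point gives a nonzero rational $t$. Next I would write $t=a/b$ with $a,b$ coprime integers, $ab\neq 0$, and clear denominators to get
\[
j(E)=\frac{P(a,b)^3\,Q(a,b)}{a^7b},\qquad P(a,b)=a^2+245ab+2401b^2,\quad Q(a,b)=a^2+13ab+49b^2.
\]
The crucial observation is that modulo $2$ both $P$ and $Q$ reduce to the norm form $a^2+ab+b^2$, which takes an odd value whenever $a,b$ are not both even; since $\gcd(a,b)=1$ this always holds. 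Hence the numerator $P(a,b)^3Q(a,b)$ is odd, so that
\[
v_2(j(E))=-v_2(a^7b)=-\bigl(7\,v_2(a)+v_2(b)\bigr)\le 0,
\]
with equality exactly when $ab$ is odd.

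I do not expect a real obstacle here: the lemma becomes elementary once the parametrisation is in hand, and the only points needing care are confirming that the $t\in\{0,\infty\}$ cusps are correctly excluded and that CM $j$-invariants admitting a rational $7$-isogeny are indeed of the form $j_B(t)$ (both already addressed in the discussion surrounding \Cref{thm: j-invariants Zywina}). A more intrinsic alternative would be to analyse the reduction type of $E$ at $2$ directly — distinguishing potentially good from potentially multiplicative reduction and exploiting the constraints that a rational $7$-isogeny imposes on the special fibre and on the associated Tate curve — but the parametric computation above is shorter and entirely self-contained.
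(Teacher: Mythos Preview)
Your proof is correct and follows essentially the same route as the paper: both invoke the Borel parametrisation from \Cref{thm: j-invariants Zywina} and then do a short $2$-adic computation. The only cosmetic difference is that the paper works directly with $v_2(t)$ (splitting into the cases $v_2(t)<0$ and $v_2(t)\ge 0$) rather than homogenising via $t=a/b$, but the content is the same.
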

\begin{proof}
    By Theorem \ref{thm: j-invariants Zywina} there exists $t \in \Q^\times$ such that $$j(E) = j_B(t) = \frac{(t^2+245t+2401)^3(t^2+13t+49)}{t^7} = \frac{f(t)}{t^7}.$$
    If $v_2(t) < 0$, then $v_2(f(t)) = 8v_2(t)$, and so $v_2(j(t))=8v_2(t)-7v_2(t) = v_2(t) < 0$. If instead $v_2(t) \ge 0$, we have $v_2(f(t)) = 0$ and therefore $v_2(j(t)) = -7v_2(t) \le 0$.
\end{proof}

\begin{proposition}\label{prop: j and Hasse}
    Let $p>3$ be a prime, $K$ a $p$-adic field with uniformiser $\pi_K$, and $E/K$ an elliptic curve with potentially good reduction.
    \begin{itemize}
        \item Suppose that $j(E) \equiv 0 \pmod {\pi_K}$: then $E$ has potentially supersingular reduction if and only if $p \equiv 2 \pmod 3$.
        \item Suppose that $j(E) \equiv 1728 \pmod {\pi_K}$: then $E$ has potentially supersingular reduction if and only if $p \equiv 3 \pmod 4$.
    \end{itemize}
\end{proposition}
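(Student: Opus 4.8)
The plan is to reduce the statement to the classical determination of which elliptic curves over $\overline{\F}_p$ with $j$-invariant $0$ or $1728$ are supersingular.

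First I would pass to the residue field. Choose a finite extension $L/K$ over which $E$ acquires good reduction (this exists because $E$ has potentially good reduction, equivalently $j(E)\in\OK$), and let $\widetilde{E}/k_L$ be its special fibre. Then $j(\widetilde{E})$ is the image of $j(E)$ under the reduction map $\mathcal{O}_L\to k_L$; since $p>3$, the residues of $0$ and $1728=2^6 3^3$ modulo $p$ are still distinct, so the hypothesis $j(E)\equiv 0\pmod{\pi_K}$ (resp.\ $\equiv 1728$) forces $j(\widetilde{E})=0$ (resp.\ $=1728$) in $k_L$. By definition $E$ has potentially supersingular reduction precisely when $\widetilde{E}$ is supersingular, and this is a property of $\widetilde{E}_{\overline{\F}_p}$ alone; moreover over $\overline{\F}_p$ an elliptic curve is determined up to isomorphism by its $j$-invariant. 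Hence everything comes down to deciding, for $p>3$, whether the curve over $\overline{\F}_p$ with $j$-invariant $0$ (resp.\ $1728$) is supersingular.

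Next I would settle this using explicit models. Take $E_0 : y^2 = x^3+1$ and $E_{1728} : y^2 = x^3+x$ over $\F_p$; both have good reduction for $p>3$, their base changes to $\overline{\F}_p$ are the curves from the previous step, and supersingularity is equivalent to $a_p=0$, i.e.\ to $\#E(\F_p)=p+1$. If $p\equiv 2\pmod 3$, then $x\mapsto x^3$ is a bijection of $\F_p$, so the affine solutions of $y^2=x^3+1$ biject with those of $y^2=u+1$ and $\#E_0(\F_p)=p+1$, so $E_0$ is supersingular. If $p\equiv 3\pmod 4$, then $\left(\tfrac{-1}{p}\right)=-1$, so the substitution $x\mapsto -x$ negates the character sum $\sum_{x\in\F_p}\left(\tfrac{x^3+x}{p}\right)$, which must therefore vanish; hence $\#E_{1728}(\F_p)=p+1$ and $E_{1728}$ is supersingular. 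For the converse implications I would use that $E_0$ and $E_{1728}$ have complex multiplication by the maximal orders of $\Q(\sqrt{-3})$ and $\Q(i)$ respectively: when $p\equiv 1\pmod 3$ (resp.\ $p\equiv 1\pmod 4$) the prime $p$ splits in the corresponding imaginary quadratic field, and a field split at $p$ cannot embed into the endomorphism algebra of a supersingular curve (the quaternion algebra $B_{p,\infty}$, whose completion at $p$ is a division algebra and so contains no copy of $\Q_p\times\Q_p$); therefore $\widetilde{E}$ is ordinary in those cases. Combining the two directions gives both equivalences.

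I do not expect a serious obstacle: the core fact — supersingularity of the $j=0$ and $j=1728$ curves in terms of congruence conditions on $p$ — is entirely classical (cf.\ Silverman, \emph{Advanced Topics in the Arithmetic of Elliptic Curves}, Ch.~V, or Deuring's criterion). The only points that need genuine care are all in the first step: that potential good reduction makes $j(E)$ integral so that reducing it makes sense, that $p>3$ keeps $0$ and $1728$ distinct in characteristic $p$, and that ``potentially supersingular'' is independent of the auxiliary extension $L$.
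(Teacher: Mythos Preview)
Your proposal is correct and follows essentially the same route as the paper: pass to an extension of good reduction, identify the reduced $j$-invariant, and appeal to the classical supersingularity criterion for the curves $y^2=x^3+1$ and $y^2=x^3+x$. The only difference is that the paper dispatches the last step with ``it is well known,'' whereas you actually sketch both directions (point counts for one implication, the CM/quaternion argument for the other); your added detail is sound.
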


\begin{proof}
    Let $L/K$ be a finite extension over which $E$ acquires good reduction and let $\tilde{E}$ be the special fibre of a model of $E_L$ having good reduction. Writing $\pi_L$ for a uniformiser of $L$, we have $j(\tilde{E}) = j(E) \bmod (\pi_L)$, so in the two cases $j(\tilde{E})$ is $0$ or $1728$. In particular, $\tilde{E}$ is a twist of $y^2=x^3+1$ (respectively $y^2=x^3+x$): it is well-known that this curve is superingular over a finite field of characteristic $p$ precisely when $p \equiv 2 \pmod{3}$ (resp.~$p\equiv 3 \pmod 4$).
\end{proof}

\subsection{$7$-adic images of Galois representations}

Following \cite{furio24}, which builds on the work of Zywina \cite{zywina11}, we recall a classification of subgroups of $\GL_2(\Z_p)$ that includes all groups that can arise as $p$-adic Galois images of non-CM elliptic curves over~$\Q$.

\begin{definition}\label{def: p-adic groups}
	Given a prime $p$ and a subgroup $G < \GL_2(\Z_p)$, for every $n \ge 1$ we define:
	\begin{itemize}
		\item $G(p^n):= G \pmod {p^n} \subseteq \GL_2\left({\Z}/{p^n\Z}\right)$;
		\item $G_n := \left\lbrace A \in G \mid A \equiv I \pmod {p^n} \right\rbrace$, where $I$ is the identity matrix.
	\end{itemize}
    Note that $G(p^n)=G/G_n$.
\end{definition}

\begin{definition}\label{def:cartan}
	Let $p$ be an odd prime and let $\varepsilon$ be the reduction modulo $p$ of the least positive integer which represents a quadratic non-residue in $\F_p^\times$. We define the following subgroups of $\GL_2(\Z_p)$:
	\begin{align*}
		\text{split Cartan:} \qquad &C_{sp}:= \left\lbrace \begin{pmatrix} a & 0\\ 0 & b \end{pmatrix} \,\middle|\, a,b \in \Z_p^\times \right\rbrace, \\
		\text{non-split Cartan:}  \qquad &C_{ns}:= \left\lbrace \begin{pmatrix} a & \varepsilon b \\ b & a \end{pmatrix} \,\middle|\, a,b \in \Z_p, \ (a,b) \not\equiv (0,0) \mod p \right\rbrace.
	\end{align*}
	Further set $C_{sp}^+ := C_{sp} \cup \begin{pmatrix} 0 & 1 \\ 1 & 0 \end{pmatrix} C_{sp}$ and $C_{ns}^+:= C_{ns} \cup \begin{pmatrix} 1 & 0 \\ 0 & -1 \end{pmatrix} C_{ns}$. These are the normalisers in $\GL_2(\Z_p)$ of $C_{sp}$ and $C_{ns}$, respectively.
\end{definition}

Throughout the rest of the paper we will make the following identifications. 
\begin{itemize}
    \item Given a class $a \in \Z/p\Z$, we denote by $pa \in \Z/p^2\Z$ the class modulo $p^2$ of $p\tilde{a}$, where $\tilde{a} \in \Z$ is any lift of $a \in \Z/p\Z$. 
    \item
    Let $\GL_2(\Z/p^2\Z)_1 :=\operatorname{ker}\left( \operatorname{GL}_2(\Z/p^2\Z) \to \operatorname{GL}_2(\Z/p\Z)\right)$. There is a group isomorphism 
    \begin{align*}
        \begin{array}{ccc}
            \left(\GL_2(\Z/p^2\Z)_1 , \times \right) & \longrightarrow & \left( M_2(\Z/p\Z), + \right) \\
            \begin{pmatrix}
                1 + pa_{11} & pa_{12} \\
                pa_{21} & 1+pa_{22}
            \end{pmatrix} & \longleftrightarrow & \begin{pmatrix}
                a_{11} & a_{12} \\
                a_{21} & a_{22}
            \end{pmatrix} \\
            I + pA & \longleftrightarrow & A
        \end{array}
    \end{align*}
    Since this is a group isomorphism, we can describe subgroups of $\GL_2(\Z/p^2\Z)_1$ as $\{I + pA \mid A \in V \}$, where $V$ is a vector subspace of $M_2(\F_p)$. We will denote such a subgroup simply by $I + pV$.
\end{itemize}

\begin{definition}\label{def: ciuffetto}
    Let $p$ and $\varepsilon$ be as in Definition \ref{def:cartan}. Let $V_{sp}$ and $V_{ns}$ be the additive groups of matrices with coefficients in $\F_p$ of the form
    \begin{align*}
		\text{split Cartan:} \qquad &V_{sp}:= \left\lbrace \begin{pmatrix} a & b\\ c & a \end{pmatrix} \,\middle|\, a,b,c \in \F_p \right\rbrace, \\
		\text{non-split Cartan:} \qquad &V_{ns}:= \left\lbrace \begin{pmatrix} a & \varepsilon b \\ -b & d \end{pmatrix} \,\middle|\, a,b,d \in \F_p \right\rbrace.
	\end{align*}
    We define the group $G_{ns}^\#(p^2)$ as the unique subgroup of $\GL_2(\Z/p^2\Z)$ of order $2p^3(p^2-1)$ containing $I + pV_{ns}$ and whose projection modulo $p$ equals $C_{ns}^+(p)$. In particular, we have $G_{ns}^\#(p^2) \cong C_{ns}^+(p) \ltimes V_{ns}$, where the semidirect product is defined by the conjugation action (see for example \cite[Proposition 2.2]{furio24}). 
    We define $G_{sp}^\#(p^2)$ in a similar way, as the unique subgroup of $\GL_2(\Z/p^2\Z)$ containing $I+pV_{sp}$ whose projection modulo $p$ equals $C_{sp}^+(p)$.
\end{definition}

\begin{remark}
The groups $G_{sp}^\#(7^2)$ and $G_{ns}^\#(7^2)$ correspond respectively to the modular curves \verb|49.196.9.1| and \verb|49.147.9.1|. 
    This motivates our notation $X_{sp}^\#(49)$ and $X_{ns}^\#(49)$ for these modular curves, which correspond to case (iv) of \Cref{thm: rszb}.
\end{remark}

The following is \cite[Proposition 5.13]{furio24}.

\begin{proposition}\label{prop: cartan49 denominator}
    Let $E/\Q$ be an elliptic curve and let $p$ be an odd prime such that $\pnRep{n} \subseteq C_{ns}^+(p^n)$ for some positive integer $n$. Write $j(E) = \frac{a}{b}$ with $\gcd(a,b)=1$: then $b=c^{p^n}$ for some integer $c$.
\end{proposition}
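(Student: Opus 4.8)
The statement to prove is \Cref{prop: cartan49 denominator}: if $\operatorname{Im}\rho_{E,p^n} \subseteq C_{ns}^+(p^n)$ for an odd prime $p$, then writing $j(E) = a/b$ in lowest terms, the denominator $b$ is a perfect $p^n$-th power.

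The plan is to work locally at each prime $\ell$ dividing the denominator $b$ and show that $v_\ell(j(E))$ is a negative multiple of $p^n$. So fix such a prime $\ell$, so that $v_\ell(j(E)) < 0$, which means $E$ has potentially multiplicative reduction at $\ell$. The key input is the theory of the Tate curve: over $\Q_\ell$ (or a quadratic extension, in the non-split-multiplicative case), $E$ becomes isomorphic to a Tate curve $E_q$ with $v_\ell(q) = -v_\ell(j(E)) > 0$, and the Galois action on torsion points is explicitly described in terms of $q$. Write $m := v_\ell(q)$; I want to show $p^n \mid m$.

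First I would reduce to understanding the image of inertia $I_\ell$ (or of $I_{\ell}$ in a relevant quadratic twist) under $\rho_{E,p^n}$. On the Tate curve, $E_q[p^n]$ is generated by $\zeta_{p^n}$ and a chosen $p^n$-th root $q^{1/p^n}$, and the action of $\sigma \in I_\ell$ sends $q^{1/p^n} \mapsto \zeta_{p^n}^{c(\sigma)} q^{1/p^n}$ for a homomorphism $c: I_\ell \to \Z/p^n\Z$ whose image has order exactly $p^n / \gcd(p^n, m)$ (since adjoining $q^{1/p^n}$ to $\Q_\ell^{\mathrm{ur}}$ gives a totally ramified extension of degree $p^n/\gcd(p^n,m)$ when $p \neq \ell$, using $\gcd(p,\ell)=1$). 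In a suitable basis, the image of this inertia subgroup is the group of unipotent matrices $\left(\begin{smallmatrix} 1 & * \\ 0 & 1\end{smallmatrix}\right)$, possibly composed with the cyclotomic character on the diagonal; in particular it contains a nontrivial unipotent element unless $p^n \mid m$. The crux is then purely group-theoretic: the normaliser $C_{ns}^+(p^n)$ of the non-split Cartan in $\GL_2(\Z/p^n\Z)$ contains no nonidentity unipotent element — equivalently, $C_{ns}^+(p^n)$ has no element of the form $I + N$ with $N$ nonzero nilpotent. For $n=1$ this is classical (a non-split Cartan and its normaliser act without nonzero fixed vectors on $\F_p^2$, or: the Cartan is a torus, and its normaliser's non-identity coset consists of elements with eigenvalues of the form $\lambda, -\lambda$, so none is unipotent); for general $n$ it follows because reduction mod $p$ of a unipotent in $C_{ns}^+(p^n)$ would be a unipotent in $C_{ns}^+(p)$, forcing it into $I + p M_2$, and there one checks directly that $C_{ns}$ meets $I + pM_2(\Z/p^n\Z)$ in the scalar-type matrices $I + p\left(\begin{smallmatrix} a & \varepsilon b \\ b & a\end{smallmatrix}\right)$, among which the only unipotent is the identity (its characteristic polynomial is $(T-1-pa)^2 - \varepsilon p^2 b^2$, which has a double root $1$ only when $pa \equiv 0$ and $b \equiv 0$), and inducting. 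Hence the image of inertia is contained in the diagonal torus-part, the unipotent part is trivial, so $p^n \mid m = v_\ell(q) = -v_\ell(j(E))$.

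Running this over all $\ell$ with $v_\ell(j(E)) < 0$ shows that the denominator $b$ of $j(E)$ is a $p^n$-th power, which is the claim. I expect the main obstacle to be handling the non-split \emph{multiplicative} case cleanly — there $E$ only acquires a Tate parametrisation over the unramified quadratic extension $\Q_{\ell^2}$, so one must be careful that passing to this extension does not change divisibility of the inertia-valuation by $p^n$ (it does not, since $[\Q_{\ell^2}:\Q_\ell] = 2$ is coprime to the odd prime $p$, so the ramification index of $\Q_\ell(q^{1/p^n})/\Q_\ell$ is unchanged), and that the mod-$p^n$ image of the decomposition group at $\ell$ still lands in (a conjugate of) $C_{ns}^+(p^n)$ so the group-theoretic input applies. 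Everything else — the Tate curve formulas, the structure of torsion under inertia — is standard (Silverman's \emph{Advanced Topics}, Ch.~V), so the write-up is mostly the group-theoretic lemma about unipotents in $C_{ns}^+(p^n)$ and the bookkeeping of valuations.
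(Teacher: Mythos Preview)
Your overall approach via Tate uniformisation is the natural one, and is close in spirit to the argument the paper gives for the related \Cref{prop: ciuffetti denominators} (the paper does not prove \Cref{prop: cartan49 denominator} itself but cites it from \cite{furio24}). However, your key group-theoretic lemma is \emph{false as stated}, and your characteristic-polynomial argument does not establish what you need.

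You claim that $C_{ns}^+(p^n)$ contains no nonidentity element $I+N$ with $N$ nilpotent. For $n\ge 2$ this fails: take
\[
M=\begin{pmatrix} 1 & \varepsilon p^{n-1} \\ p^{n-1} & 1\end{pmatrix}\in C_{ns}(p^n);
\]
then $(M-I)^2=\varepsilon p^{2n-2} I=0$ in $M_2(\Z/p^n\Z)$, so $M$ is a nonidentity unipotent. Your char-poly computation only yields $pa\equiv 0$ and $\varepsilon p^2 b^2\equiv 0\pmod{p^n}$, and the second condition does \emph{not} force $b\equiv 0$ once $n\ge 2$; the suggested induction stalls at $M\equiv I\pmod{p^{\lceil n/2\rceil}}$.

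What the Tate curve actually hands you is stronger than ``char poly $(T-1)^2$'': the inertial image is conjugate to a subgroup of $\left\{\left(\begin{smallmatrix}1&c\\0&1\end{smallmatrix}\right)\right\}$, so every element fixes a common \emph{unimodular} vector. The correct lemma is that no nonidentity $M\in C_{ns}^+(p^n)$ fixes a unimodular vector $v$ while having trace $2$. This is immediate: trace $2$ excludes the non-Cartan coset (trace $0$, $p$ odd); for $M=\left(\begin{smallmatrix}a&\varepsilon b\\ b&a\end{smallmatrix}\right)$ in the Cartan with $Mv=v$ and, say, the first coordinate of $v$ a unit, one gets $(a-1)(x^2-\varepsilon y^2)=0$ with $x^2-\varepsilon y^2$ a unit (since $\varepsilon$ is a non-residue), hence $a=1$ and then $b=0$. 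With this correction your argument works for $\ell\ne p$. You also leave $\ell=p$ untreated; that case needs a separate (short, at least for $p>3$) argument that the non-split Cartan hypothesis forces potentially good reduction at $p$.
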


We now generalise this result to the groups $G_{ns}^\#(p^2)$ and $G_{sp}^\#(p^2)$.

\begin{proposition}\label{prop: ciuffetti denominators}
    Let $E$ be an elliptic curve defined over a number field $K$ and let $p$ be an odd prime such that $\pnRep{2}$ is contained in $G_{ns}^\#(p^2)$ or in $G_{sp}^\#(p^2)$ up to conjugacy. In the split case, assume also that $p$ is tamely ramified in $K$. Then $p^2$ divides
    $$\gcd_{\lambda \subseteq \OK \text{ prime}}(\max\{0,-v_\lambda(j(E))\}).$$
\end{proposition}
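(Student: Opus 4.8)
# Proof Proposal

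The plan is to mimic the proof of \Cref{prop: cartan49 denominator} (i.e.\ \cite[Proposition 5.13]{furio24}), but tracking the extra mod-$p^2$ information encoded in the groups $G_{ns}^\#(p^2)$ and $G_{sp}^\#(p^2)$. The statement is local: fixing a prime $\lambda \subseteq \OK$ above $p$, we must show that if $v_\lambda(j(E)) < 0$ then in fact $p^2 \mid v_\lambda(j(E))$. (For $\lambda \nmid p$ the condition $v_\lambda(j(E))<0$ forces potentially multiplicative reduction, and the image of inertia at $\lambda$ in $\GL_2(\Z/p^2\Z)$ is, up to conjugacy, generated by a transvection $I + \binom{0\ *}{0\ 0}$; one checks directly that such an element lies in $G_{ns}^\#(p^2)$ or $G_{sp}^\#(p^2)$ only if $v_\lambda(j(E)) \equiv 0 \bmod p^2$, exactly as in the $C^+_{ns}$ case. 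So the real content is at $\lambda \mid p$, and we may base change to the completion $K_\lambda$.) First I would reduce to the case where $E$ has potentially multiplicative reduction at $\lambda$ over $K_\lambda$: if $v_\lambda(j(E))<0$ this is automatic. Replacing $K$ by a quadratic twist does not change $v_\lambda(j(E))$ and only conjugates the image by a scalar, so we may assume $E$ has split multiplicative reduction over a finite extension; the Tate curve then gives a uniformisation with parameter $q$, $v_\lambda(q) = -v_\lambda(j(E)) =: d > 0$.

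The key step is the structure of the image of inertia $I_\lambda$ in $\GL_2(\Z/p^2\Z)$ under the Tate parametrisation. For the Tate curve $E_q$ over $K_\lambda$ with split multiplicative reduction, $\rho_{E,p^2}$ restricted to $I_\lambda$ (the wild/tame inertia) has image generated, in a suitable basis, by the matrix $\begin{pmatrix} 1 & d \\ 0 & 1 \end{pmatrix}$ together with matrices $\begin{pmatrix} \chi(\sigma) & * \\ 0 & 1 \end{pmatrix}$ coming from the cyclotomic character $\chi: I_\lambda \to (\Z/p^2\Z)^\times$; here the ``$d$'' appears because the extension $K_\lambda(q^{1/p^2})/K_\lambda(\zeta_{p^2})$ has degree equal to $p^2/\gcd(p^2,d)$ and the Kummer cocycle is given by $\sigma \mapsto \sigma(q^{1/p^2})/q^{1/p^2}$, which records $d \bmod p^2$. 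So the image of $I_\lambda$ contains a unipotent element $u = I + p^? \binom{0\ *}{0\ 0}$ whose ``strictly-upper-right'' entry, reduced appropriately, encodes $d \bmod p^2$.

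Now I would combine this with the constraint that $\operatorname{Im}\rho_{E,p^2} \subseteq G_{ns}^\#(p^2)$ (resp.\ $G_{sp}^\#(p^2)$), up to conjugacy. The group $G_{ns}^\#(p^2)$ reduces mod $p$ to $C_{ns}^+(p)$, which is conjugate to no Borel; but the Tate curve forces $\rho_{E,p}(I_\lambda)$ to be contained in a Borel (upper triangular) subgroup, containing the nontrivial unipotent $\binom{1\ \bar d}{0\ 1}$ if $p \nmid d$. The intersection of $C_{ns}^+(p)$ with a Borel contains no nontrivial unipotent element, so we must have $p \mid d$ — this recovers the mod-$p$ divisibility and is exactly the argument from \Cref{prop: cartan49 denominator}. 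The new point is the second step: writing $d = p d'$, the unipotent part of $\rho_{E,p^2}(I_\lambda)$ now lands in $\GL_2(\Z/p^2\Z)_1$, i.e.\ in $I + pV$ for the appropriate $V$ ($V_{ns}$ or $V_{sp}$), and corresponds under the isomorphism of Definition \ref{def: ciuffetto} to a nilpotent matrix of the form $\binom{0\ d'}{0\ 0} \in M_2(\F_p)$ (up to the conjugation needed to match bases). But I claim that no conjugate of $\binom{0\ d'}{0\ 0}$ with $d' \not\equiv 0$ lies in a $C_{ns}^+(p)$-stable subspace of the shape $V_{ns}$: concretely, an element of $I + pV_{ns}$ that also lies (after reduction and conjugation) in the upper-triangular Borel must, by the explicit shape of $V_{ns}$ in Definition \ref{def: ciuffetto}, be the identity — a matrix $\binom{a\ \varepsilon b}{-b\ d}$ that is strictly upper triangular forces $b=0$ and $a=d=0$ once we also remember it must be \emph{nilpotent} (trace and determinant zero). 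Hence $d' \equiv 0 \bmod p$, i.e.\ $p^2 \mid d = -v_\lambda(j(E))$. The split case is handled identically, using $V_{sp}$ in place of $V_{ns}$; this is precisely where the tame-ramification hypothesis on $p$ in $K$ is used, to ensure that the image of wild inertia is trivial on $\mu_{p^2}$ and so the Kummer-theoretic computation of the unipotent part goes through with no wild contribution polluting the mod-$p^2$ reduction.

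The main obstacle I anticipate is bookkeeping the conjugation: the containment $\operatorname{Im}\rho_{E,p^2}\subseteq G_{ns}^\#(p^2)$ holds only up to conjugacy in $\GL_2(\Z/p^2\Z)$, and one must check that the \emph{simultaneous} normalisation — putting the Tate-curve inertia image in upper-triangular form \emph{and} the ambient group in the standard form of Definition \ref{def: ciuffetto} — is consistent, i.e.\ that the conjugating element can be chosen to respect both. The clean way around this is to phrase the contradiction invariantly: the image of inertia at $\lambda$ contains a nontrivial unipotent element $u \in \GL_2(\Z/p^2\Z)$ with $(u - I)^2 = 0$ and $u \equiv I \bmod p$ (once we know $p \mid d$), and one shows that \emph{every} such element of $G_{ns}^\#(p^2)$ (resp.\ $G_{sp}^\#(p^2)$), for \emph{any} embedding of that group as a conjugate of the standard one, actually satisfies $u \equiv I \bmod p^2$. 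This is a finite group-theoretic check — $|G_{ns}^\#(p^2)| = 2p^3(p^2-1)$ — reducing to: the only element of $I + pV_{ns}$ of the form $I + p N$ with $N$ nilpotent and conjugate (by an element of $\GL_2(\Z/p\Z)$ normalising the relevant structure) to $\binom{0\ *}{0\ 0}$ is the identity. Since $V_{ns}$ contains no nonzero nilpotent matrix at all — if $\binom{a\ \varepsilon b}{-b\ d}$ has trace $a+d=0$ and determinant $ad+\varepsilon b^2 = -a^2 + \varepsilon b^2 = 0$, then $\varepsilon b^2 = a^2$ forces $a=b=0$ because $\varepsilon$ is a non-residue, hence also $d=0$ — the claim is immediate in the non-split case. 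In the split case $V_{sp}$ does contain nonzero nilpotents (e.g.\ $\binom{0\ 1}{0\ 0}$), so one genuinely needs the Borel-intersection argument together with the tame-ramification hypothesis to pin down the unipotent generator; this is the only place where a little care is required, and it is why the hypothesis appears only in the split case.
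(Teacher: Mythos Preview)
Your argument hinges on the claim that the local image contains a specific nonzero \emph{nilpotent} element $\begin{pmatrix} 0 & d' \\ 0 & 0 \end{pmatrix}$ (with $d'=e/p$), which you then contradict against $V_{ns}$. But once you pass to $K(\zeta_p)$, the image of $G_{K_\lambda}$ under $I+pA \mapsto A$ sits inside $\left\{ \begin{pmatrix} a & b \\ 0 & 0 \end{pmatrix} \right\}$ with $a$ coming from $(\chi_{p^2}-1)/p$, and when $\zeta_{p^2}\notin K_\lambda$ this image may well be a one-dimensional line generated by some $\begin{pmatrix} a_0 & b_0 \\ 0 & 0 \end{pmatrix}$ with $a_0\ne 0$ --- a line containing \emph{no} nonzero nilpotent. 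Your contradiction simply does not fire in this case, and nothing you have written excludes it. The paper handles precisely this case by using $a_0\ne 0$ \emph{constructively}: the element $q^{1/p^2}\zeta_{p^2}^{-b_0/a_0}$ is fixed by the generator, hence lies in $K_\lambda$, so its valuation $e/p^2$ is an integer. The fact that $V_{ns}$ has no nonzero nilpotents does appear in the paper's proof, but in the opposite direction --- to \emph{force} $a_0\ne 0$, thereby enabling the trick --- not to conclude $d'=0$ directly.

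In the split case your sketch is too thin to constitute a proof, and your stated role for the tame hypothesis (``ensure wild inertia is trivial on $\mu_{p^2}$'') is not right: $\Q_p(\zeta_{p^2})/\Q_p(\zeta_p)$ is wildly ramified. The paper uses tame ramification of $p$ in $K$ to guarantee $\zeta_{p^2}\notin K_\lambda$, so that $\chi_{p^2}$ hits $1+p$ and one can again find a generator with $a_0\ne 0$, feeding the same $\zeta^{-b/a}$ construction. You also do not rule out the order-$p^2$ case in the split setting; since $V_{sp}$ \emph{does} contain nilpotent lines, nilpotency alone is insufficient there, and the paper supplies a separate linear-algebra argument showing that no $2$-dimensional subspace of singular matrices of the form $\begin{pmatrix} * & * \\ 0 & 0 \end{pmatrix}$ can sit inside a conjugate of $V_{sp}$.
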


\begin{proof}
    If $j(E) \in \OK$ the statement is trivial, so suppose otherwise. Let $\lambda$ be a prime of $K$ such that $e := -v_\lambda(j(E)) > 0$. We need to show that $p^2 \mid e$. We can assume that $\zeta_p \in K$: indeed, $p$ does not divide $[K(\zeta_p):K]$, and so the power of $p$ that divides the valuation of $j(E)$ at primes of $K(\zeta_p)$ above $\lambda$ is the same as the power of $p$ dividing the $\lambda$-adic valuation of $j(E)$.
    Consider $E$ to be defined over $K_\lambda$ and let $E_q$ be the Tate curve with parameter $q \in K_\lambda^\times$, isomorphic to $E$ over an at most quadratic extension of $K_\lambda$ (see \cite[Chapter V, \S3]{silverman-advanced-topics} for details). We know that $v_\lambda(q) = e$. If $\chi_{p^2}$ is the cyclotomic character modulo $p^2$, there is an at most quadratic character $\psi$ such that $\rho_{E_q,p^2} \cong \rho_{E,p^2} \otimes \psi$, hence we have
	$$\rho_{E,p^2} \otimes \psi \cong \begin{pmatrix} \chi_{p^2} & k \\ 0 & 1 \end{pmatrix} \equiv \begin{pmatrix} 1 & k \\ 0 & 1 \end{pmatrix} \pmod p,$$
	where $k(\sigma)$ is defined by the equality $\sigma\left(q^\frac{1}{p^2}\right) = q^\frac{1}{p^2} \zeta_{p^2}^{k(\sigma)}$. Note that the character $\chi_p = \chi_{p^2} \pmod p$ is trivial, as $\zeta_p \in K$. Moreover, $k \equiv 0 \pmod p$, for otherwise $\pRep$ would contain elements of order $p$, which is impossible since $\pRep \subseteq C_{ns}^+(p) \cup C_{sp}^+(p)$.
    In particular, every element in $\operatorname{Im}\rho_{E_q,p^2}$ can be written as $I+pA$, with $A$ of the form $\begin{pmatrix} \ast & \ast \\ 0 & 0 \end{pmatrix}$. Under the isomorphism $I+pA \mapsto A \pmod p$, we deduce that $\operatorname{Im}\rho_{E_q,p^2}$ can be viewed as a subgroup of the additive group $\F_p\begin{pmatrix} 1 & 0 \\ 0 & 0 \end{pmatrix} \oplus \F_p \begin{pmatrix} 0 & 1 \\ 0 & 0 \end{pmatrix} \subseteq M_2(\F_p)$, which has order $p^2$. The group $\operatorname{Im}\rho_{E_q,p^2}$ must be conjugate to a subgroup of either $V = V_{sp}$ or $V = V_{ns}$ (see \Cref{def: ciuffetto}).
    Suppose that $\operatorname{Im}\rho_{E_q,p^2}$ has order $p^2$. This implies that there exists a subgroup $H$ of $V$ of order $p^2$ every element of which has rank $0$ or $1$. Let $M_1, M_2$ be an $\F_p$-basis of $H$ and write $M_i=\begin{pmatrix} a_i & b_i \\ c_i & d_i\end{pmatrix}$ for $i=1, 2$. Changing basis if necessary (recall that $\operatorname{Im}\rho_{E_q,p^2}$ has a natural structure of $\F_p$-vector space) we can assume that $\operatorname{tr}M_1 = 0$, and so that $d_1 = -a_1$. For every choice of $x,y \in \F_p$ we must have
    $$0 = \det(xM_1+yM_2) = (a_1x+a_2y)(d_1x+d_2y)-(b_1x+b_2y)(c_1x+c_2y) = (a_1d_2 + a_2d_1 - b_1c_2 - b_2c_1)xy = 0,$$
    where we used the fact that $\det M_i = a_id_i-b_ic_i = 0$. In particular, this gives $a_1d_2+a_2d_1 = b_1c_2+b_2c_1$. \\
    In the non-split case, where $b_i = -\varepsilon c_i$, using the fact that $d_1 = -a_1$, we would have $0 = \det M_1 = -a_1^2 + \varepsilon c_1^2$, which is impossible by definition of $\varepsilon$ (since $a_1$ and $c_1$ cannot be both equal to $0$, otherwise $M_1 = 0$). \\
    In the split case, where $a_i = d_i$, we have $a_1 = -d_1 = -a_1$, and so $a_1 = 0$. This gives $0 = \det M_1 = -b_1c_1$, and so exactly one of $b_1$ and $c_1$ is $0$. Suppose without loss of generality that $c_1=0$. The equation $a_1d_2+a_2d_1 = b_1c_2+b_2c_1$ becomes $0 = b_1c_2$, and so $c_2=0$. This implies that $0 = \det M_2 = a_2^2$, and so
    $$M_2 = \begin{pmatrix} 0 & b_2 \\ 0 & 0 \end{pmatrix} \in \F_p \begin{pmatrix} 0 & b_1 \\ 0 & 0 \end{pmatrix} = \F_p M_1,$$
    which is impossible because $M_1$ and $M_2$ are a basis (if we had assumed that $b_1 = 0$ we would have obtained the same result). \\
    We have then proved that $\operatorname{Im}\rho_{E_q,p^2}$ has order either $1$ or $p$. In the first case, we have $k=0$, and so $\sigma(q^{\frac{1}{p^2}}) = q^{\frac{1}{p^2}}$ for every automorphism $\sigma$. This implies that $q^\frac{1}{p^2} \in K_\lambda$, which gives $v_\lambda(q^{\frac{1}{p^2}}) = \frac{e}{p^2} \in \Z$, and hence $p^2 \mid e$. In the second case, let
	\begin{equation*}
		M_\sigma := (\rho_{E,p^2} \otimes \psi)(\sigma) = I + p \begin{pmatrix} a & b \\ 0 & 0 \end{pmatrix}
	\end{equation*}
	be a generator of the group. The matrix $M_\sigma$ is the reduction modulo $p^2$ of $(\rho_{E,p^\infty} \otimes \psi)(\sigma)$, so (by a slight abuse of notation) we can also consider $a,b$ as elements of $\Z_p$.
    We can assume that $a \not \equiv 0 \pmod{p}$: indeed, in the split case there exists an automorphism $\sigma$ such that $\chi_{p^2}(\sigma) = 1+p$ (because $p$ is tamely ramified in $K$), while in the non-split case, $a \equiv 0 \pmod{p}$ would give a non-zero element $g$ in $V$ conjugate to $\begin{pmatrix} 0 & 1 \\ 0 & 0 \end{pmatrix}$, and so with $\operatorname{tr} g = \det g = 0$, which is impossible as shown above.
	Note that $a$ is invertible in $\Z_p$ and consider the element $q^\frac{1}{p^2}\zeta_{p^2}^{-{b}/{a}}$: in the basis $( \zeta_{p^2}, q^\frac{1}{p^2} )$ this is expressed as the vector $\left(-{b}/{a} , 1\right)$. 
    We then have
	\begin{align*}
		\sigma(q^\frac{1}{p^2}\zeta_{p^2}^{-{b}/{a}}) \longleftrightarrow \begin{pmatrix} 1+p a & p b \\ 0 & 1 \end{pmatrix} \begin{pmatrix} -{b}/{a} \\ 1 \end{pmatrix} = \begin{pmatrix} -{b}/{a} \\ 1 \end{pmatrix} \longleftrightarrow q^\frac{1}{p^2}\zeta_{p^2}^{-{b}/{a}},
	\end{align*}
	and so $\sigma(q^\frac{1}{p^2}\zeta_{p^2}^{-{b}/{a}}) = q^\frac{1}{p^2}\zeta_{p^2}^{-{b}/{a}}$. Since $M_\sigma$ generates $\operatorname{Im}\rho_{E_q,p^2}$, this implies that $q^\frac{1}{p^2}\zeta_{p^2}^{-{b}/{a}}$ is fixed by every automorphism, and so $q^\frac{1}{p^2}\zeta_{p^2}^{-{b}/{a}} \in K_\lambda$. We conclude that $v_\lambda(q^\frac{1}{p^2}\zeta_{p^2}^{-{b}/{a}}) = \frac{e}{p^2} \in \Z$, and hence $p^2 \mid e$.
\end{proof}

\begin{lemma}\label{lemma: 7-adic valuation on X_ns^++(49)}
    Let $E/\Q$ be an elliptic curve with $\operatorname{Im}\rho_{E,49} \subseteq G_{ns}^\#(49)$. We have $v_7(j(E)) > 0$.
\end{lemma}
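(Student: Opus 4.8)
The plan is to marry the $j$-map of $X_{ns}^+(7)$ with a local analysis at the prime $7$. Reducing the hypothesis modulo $7$ gives $\operatorname{Im}\rho_{E,7}\subseteq C_{ns}^+(7)$, so by \Cref{thm: j-invariants Zywina} there is $t=x/y\in\mathbb{P}^1(\Q)$ (with $x,y$ coprime integers, and $(x,y)=(1,0)$ if $t=\infty$) such that
\[
j(E)=j_{ns}(t)=\frac{64\,G(x,y)^3}{F(x,y)^7},\qquad
\begin{aligned}
G&=x(x^2+7y^2)(x^2-7xy+14y^2)(5x^2-14xy-7y^2),\\
F&=x^3-7x^2y+7xy^2+7y^3,
\end{aligned}
\]
the homogenisations of the numerator and denominator of $j_{ns}$. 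I would then distinguish two cases according to whether $7\mid x$.

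If $7\mid x$, write $v_7(x)=m\ge1$ (so $7\nmid y$). A direct $7$-adic valuation count gives $v_7=m,1,1,1$ on the four factors of $G$ and $v_7(F)=1$, hence $v_7(j(E))=3(m+3)-7=3m+2>0$, which is the desired conclusion. The crux is therefore to rule out the case $7\nmid x$. In that case the same count yields $v_7(G)=v_7(F)=0$, so $v_7(j(E))=0$; moreover $G\equiv5x^7$ and $F\equiv x^3\pmod7$, whence $j(E)\equiv64\cdot5^3=8000\equiv1728\pmod7$. So $E$ has potentially good reduction at $7$ with $j(E)\equiv1728\pmod7$, and since $7\equiv3\pmod4$, \Cref{prop: j and Hasse} shows this reduction is potentially supersingular.

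It remains to see this contradicts $\operatorname{Im}\rho_{E,49}\subseteq G_{ns}^\#(49)$. For this I would use the description of the inertia action on torsion in the potentially good supersingular case; write $I_7$ for the inertia subgroup at $7$. The semistability defect $e$ of $E/\Q_7$ divides the order of the automorphism group of the reduced curve, which has $j$-invariant $1728\in\F_7$; hence $e\mid4$, in particular $e$ is prime to $7$. Over the tamely ramified degree-$e$ extension of $\Q_7^{\mathrm{unr}}$ where $E$ acquires good supersingular reduction, the Lubin--Tate–type description of the Galois action on the (height $2$) formal group — combined with a Newton-polygon computation showing that $\Q_7^{\mathrm{unr}}(E[7])$ is totally ramified of degree $48$ over $\Q_7^{\mathrm{unr}}$ and $[\Q_7^{\mathrm{unr}}(E[49]):\Q_7^{\mathrm{unr}}(E[7])]=49$ — gives that $\rho_{E,49}(I_7)$ is, up to conjugacy, the full non-split Cartan $C_{ns}(49):=C_{ns}(\Z_7)\bmod49$, of order $48\cdot49$. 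Writing $\rho_{E,49}(I_7)=h\,C_{ns}(49)\,h^{-1}$ and reducing modulo $7$, the group $\bar h\,C_{ns}(7)\,\bar h^{-1}$ is a non-split Cartan contained in $C_{ns}^+(7)$; since $C_{ns}(7)$ is the unique non-split Cartan subgroup contained in $C_{ns}^+(7)$ (its other index-$2$ subgroups are not cyclic of order $48$), we get $\bar h\in N(C_{ns}(7))=C_{ns}^+(7)$. Let $W\subseteq M_2(\F_7)$ be the image of $C_{ns}(49)\cap\GL_2(\Z/49\Z)_1$ under the isomorphism $I+7A\leftrightarrow A$, explicitly $W=\left\{\left(\begin{smallmatrix}a&\varepsilon b\\ b&a\end{smallmatrix}\right):a,b\in\F_7\right\}$; since $W\setminus\{0\}$ is exactly the group $C_{ns}(7)$ viewed in $M_2(\F_7)$, and $\bar h$ normalises $C_{ns}(7)$, we get $\bar hW\bar h^{-1}=W$, so
\[
\rho_{E,49}(I_7)\cap\GL_2(\Z/49\Z)_1=h\,(I+7W)\,h^{-1}=I+7\,\bar hW\bar h^{-1}=I+7W.
\]
On the other hand $\rho_{E,49}(I_7)\subseteq G_{ns}^\#(49)$ forces this group into $G_{ns}^\#(49)\cap\GL_2(\Z/49\Z)_1=I+7V_{ns}$, so $W\subseteq V_{ns}$. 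Comparing \Cref{def:cartan} with \Cref{def: ciuffetto} and using $2\ne0$ in $\F_7$, one checks $W\cap V_{ns}=\F_7\cdot I\subsetneq W$, a contradiction that rules out the case $7\nmid x$.

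The step I expect to be the main obstacle is the identification of $\rho_{E,49}(I_7)$ with the \emph{full} non-split Cartan $C_{ns}(49)$, rather than merely a subgroup of the normaliser of one. The mod-$7$ statement (that $\rho_{E,7}(I_7)$ is a non-split Cartan subgroup) is classical, but one also needs to control the wild inertia part $\rho_{E,49}(I_7)\cap\GL_2(\Z/49\Z)_1$ and show that it has order $49$ and the correct ``Cartan shape'' compatibly with the conjugacy already fixed modulo $7$; this is where the formal-group analysis and the Newton-polygon ramification computation enter, and where the hypothesis $j(E)\equiv1728\pmod7$ is essential — it guarantees that the relevant twisting character has order dividing $4$, hence coprime to $7$, so that passing from good to potentially good reduction does not shrink the image.
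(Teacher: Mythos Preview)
Your approach is entirely different from the paper's. The paper gives a purely computational proof: working with the canonical model of $X_{ns}^\#(49)$ and its $j$-map, written as $j_0/j_1$ with $j_0,j_1$ integral homogeneous polynomials, it simply checks that for every point $P$ on the na\"ive integral model modulo $49$ one has $j_0(P)\equiv 0\pmod 7$ and $j_1(P)\not\equiv 0\pmod 7$. This is a finite machine check requiring nothing beyond the explicit equations, but it is opaque. Your argument is conceptual: you parametrise via $X_{ns}^+(7)$, dispose of the case $7\mid x$ by a direct valuation count, and for $7\nmid x$ observe that $j(E)\equiv 1728\pmod 7$, deduce potential supersingular reduction at $7$ via \Cref{prop: j and Hasse}, and seek a contradiction by comparing the inertia image with the shape of $G_{ns}^\#(49)$. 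This explains \emph{why} the lemma holds: the ``extra'' wild piece $I+7V_{ns}$ of $G_{ns}^\#(49)$ is precisely orthogonal to the Cartan wild piece $I+7W$ that supersingular inertia forces.

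The argument is essentially correct, and the final contradiction $W\not\subseteq V_{ns}$ is exactly right (indeed $V_{ns}=\F_7 I\oplus W^{\perp}$ for the trace-form complement $W^{\perp}$ of $W$). Two points deserve care. First, a minor inaccuracy: when $e>1$ the group $\rho_{E,49}(I_7)$ need not \emph{equal} a conjugate of $C_{ns}(49)$ --- it can be strictly larger, sitting between $C_{ns}(49)$ and its normaliser --- so your degree claim $[\Q_7^{\mathrm{unr}}(E[7]):\Q_7^{\mathrm{unr}}]=48$ is off. What you actually need is the wild part: since $e\mid 4$ is prime to $7$ one has $P_7=P_L$, and it suffices that $\rho_{E,49}(P_L)$ be a $C_{ns}^+(7)$-conjugate of $I+7W$. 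Second, and more substantively, the wild part having the Cartan shape $I+7W$ rather than $I+7W^{\perp}$ (which \emph{does} sit inside $V_{ns}$, so would give no contradiction) requires knowing that $\rho_{E,49}(I_L)$ lies in a non-split Cartan, not merely in some group surjecting onto $C_{ns}(7)$. This is true --- for good supersingular reduction the inertia image on $T_pE$ lies in $\mathcal{O}_{p^2}^\times$ up to conjugacy, hence is abelian, and abelianness forces the wild part to centralise $C_{ns}(7)$ and therefore to lie in $W$; combined with your Newton-polygon bound $\lvert\text{wild part}\rvert\ge 49$ this pins it down as $I+7W$ exactly. But this abelianness is the genuine content of the step you flag as the obstacle, and it needs Serre's description of the supersingular inertia image (not just the mod-$p$ version) rather than a bare ramification count.
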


\begin{proof}
    Set $X:=X_{ns}^\#(49)$. We consider the canonical model of the curve $X$ and its $j$-map to $\mathbb{P}^1$ (which can be found at \cite[\href{https://beta.lmfdb.org/ModularCurve/Q/49.147.9.a.1/}{modular curve 49.147.9.a.1}]{lmfdb}), and write $j$ as the ratio $\frac{j_0}{j_1}$ of two homogeneous polynomials $j_0, j_1$ with coefficients in $\Z$. We na\"ively extend $X$ over $\Z$ by considering the projective scheme $\mathcal{X} / \operatorname{Spec} \mathbb{Z}$ defined by the same equations defining the canonical model of $X$ (rescaled to have integer coefficients). 
    It then suffices to check that for every $P \in \mathcal{X}(\Z/49\Z)$ we have $j_0(P) \equiv 0 \pmod 7$ and $j_1(P) \not\equiv 0 \pmod 7$.
\end{proof}

\section{From rational points on modular curves to Fermat equations}\label{sect: ratpts to Fermat}

\begin{proposition}\label{prop: norm equations}
    Let $E/\Q$ be an elliptic curve without CM and let $j_{sp}$ and $j_{ns}$ be defined as in \Cref{thm: j-invariants Zywina}.
    \begin{itemize}
        \item If $\operatorname{Im}\rho_{E,49} \subseteq C_{ns}^+(49)$, there exist $k \in \{1,8\}$ and pairwise coprime integers $x,y,z \in \Z$ such that
        $$x^3-7x^2y+7xy^2+7y^3 = k \cdot z^7 \qquad \text{and} \qquad j(E) = j_{ns}\left(\frac{x}{y}\right).$$
        \item If $\operatorname{Im}\rho_{E,49} \subseteq G_{ns}^\#(49)$, there exist $k \in \{1,8\}$ and pairwise coprime integers $x,y,z \in \Z$ such that
        $$x^3-7x^2y+7xy^2+7y^3 = 7k \cdot z^7 \qquad \text{and} \qquad j(E) = j_{ns}\left(\frac{x}{y}\right).$$
        \item If $\operatorname{Im}\rho_{E,49} \subseteq G_{sp}^\#(49)$, there exist $k \in \{1,7\}$ and pairwise coprime integers $x,y,z,w \in \Z$ such that
        $$x^3-4x^2y+3xy^2+y^3 = k \cdot z^7, \qquad y=w^7, \qquad \text{and} \qquad j(E) = j_{sp}\left(\frac{x}{y}\right).$$
    \end{itemize}
\end{proposition}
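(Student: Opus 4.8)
The plan is to combine Zywina's parametrisations (\Cref{thm: j-invariants Zywina}) with the ``denominator is a $49$th power'' results (\Cref{prop: cartan49 denominator} and \Cref{prop: ciuffetti denominators}) and then reduce everything to a prime-by-prime analysis of the valuations of $j(E)$. In each of the three cases the reduction modulo $7$ of the relevant group is $C_{ns}^+(7)$ (first two cases) or $C_{sp}^+(7)$ (third), so $\operatorname{Im}\rho_{E,7}$ lies in the corresponding Cartan normaliser and \Cref{thm: j-invariants Zywina} gives $t\in\mathbb{P}^1(\Q)$ with $j(E)=j_{ns}(t)$, resp.\ $j_{sp}(t)$. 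Since $E$ has no CM we may discard $t\in\{0,\infty\}$ (these give CM or undefined values of $j$, cf.\ \Cref{rmk: extreme values}), so $t=x/y$ with $x,y$ coprime and $xy\neq 0$. Substituting $t=x/y$ and clearing denominators, in the non-split cases one gets $j(E)=G(x,y)^3/F(x,y)^7$ with $F(x,y)=x^3-7x^2y+7xy^2+7y^3$ and $G$ the degree-$7$ homogenisation of $4t(t^2+7)(t^2-7t+14)(5t^2-14t-7)$ (a $7$th power since $64=4^3$); in the split case the lone, uncubed factor $t$ in the numerator of $j_{sp}$ produces $j(E)=N(x,y)/\bigl(y\,F_{sp}(x,y)\bigr)^7$ with $F_{sp}(x,y)=x^3-4x^2y+3xy^2+y^3$. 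By \Cref{prop: cartan49 denominator} and \Cref{prop: ciuffetti denominators} (the split case using that $7$ is unramified in $\Q$) the denominator of $j(E)$ in lowest terms is a $49$th power, i.e.\ $\max\{0,-v_p(j(E))\}\equiv 0\pmod{49}$ for every prime $p$.

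For the non-split cases I would argue as follows. A direct computation of $\operatorname{Res}(f,g)$ shows that its only prime factors are $2$ and $7$; hence for $p\notin\{2,7\}$ the values $F(x,y)$ and $G(x,y)$ cannot be simultaneously divisible by $p$ (a common prime divisor would give a common root in $\mathbb{P}^1(\F_p)$), so $v_p$ of the denominator of $j(E)$ equals $7\,v_p(F(x,y))$, and divisibility by $49$ forces $7\mid v_p(F(x,y))$. At $p=2$: $F(x,y)$ is odd unless $x,y$ are both odd, and in that case a congruence modulo $16$ gives $F(x,y)\equiv 8\pmod{16}$, i.e.\ $v_2(F(x,y))=3$ exactly (while $v_2(G(x,y))>3$); so $v_2(F(x,y))\in\{0,3\}$. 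At $p=7$ the two cases diverge. Since $F(x,y)\equiv x^3\pmod 7$, one has $7\mid F(x,y)$ iff $7\mid x$, in which case $v_7(F(x,y))=1$ and $v_7(j(E))>0$, so $\overline{j(E)}=0$; by \Cref{prop: j and Hasse} (and $7\equiv1\pmod3$) $E$ would then have potentially good \emph{ordinary} reduction at $7$, so over a ramified extension $L/\Q_7$ the group $I_L$ would act on $T_7E$ through $\operatorname{diag}(\chi_{49},1)$ with $\chi_{49}|_{I_L}$ of order $\geq 7$; but an element of $C_{ns}^+(49)$ whose eigenvalues $\zeta,1$ both lie in $\Z/49\Z$ must satisfy $\zeta\in\{1,-1\}$, contradicting $\operatorname{Im}\rho_{E,49}\subseteq C_{ns}^+(49)$. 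Hence in the $C_{ns}^+(49)$ case $7\nmid x$ and $v_7(F(x,y))=0$, so $F(x,y)=\pm 2^{v_2(F)}\cdot(\text{$7$th power coprime to }14)$, i.e.\ $F(x,y)=kz^7$ with $k\in\{1,8\}$ (absorbing the sign into $z$, legitimate since $7$ is odd). In the $G_{ns}^\#(49)$ case \Cref{lemma: 7-adic valuation on X_ns^++(49)} forces $v_7(j(E))>0$, which here forces $7\mid x$ (otherwise $v_7(F)=v_7(G)=0$) and hence $v_7(F(x,y))=1$, giving $F(x,y)=7kz^7$ with $k\in\{1,8\}$. In both cases $x,y,z$ are pairwise coprime, since a prime dividing $z$ divides $F(x,y)$ and therefore, using $\gcd(x,y)=1$ and the explicit shape of $F$, divides neither $x$ nor $y$.

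The split case is parallel but exploits the factorisation $\bigl(y\,F_{sp}(x,y)\bigr)^7$ of the denominator. One checks directly that $\gcd\bigl(y,F_{sp}(x,y)\bigr)=\gcd\bigl(y,N(x,y)\bigr)=1$ (any common prime divisor would divide $x$), so for $p\mid y$ the $p$-valuation of the denominator of $j(E)$ equals $7\,v_p(y)$, whence $7\mid v_p(y)$ for all such $p$ and $y=w^7$ (choosing $y>0$). Next, $F_{sp}(x,y)$ is always odd, and computing $\operatorname{Res}(f_{sp},\,\cdot\,)$ against each irreducible factor of the numerator of $j_{sp}$ shows that the only prime possibly dividing $\gcd\bigl(F_{sp}(x,y),N(x,y)\bigr)$ is $7$; so for $p\neq 7$ the same divisibility argument gives $7\mid v_p(F_{sp}(x,y))$, while a congruence modulo $49$ (using $F_{sp}(x,y)\equiv(x+y)^3\pmod 7$, and $F_{sp}(x,y)\equiv -7y^3\pmod{49}$ when $7\mid x+y$) gives $v_7(F_{sp}(x,y))\in\{0,1\}$. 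Assembling these, $F_{sp}(x,y)=kz^7$ with $k\in\{1,7\}$, and $x,y,z,w$ are pairwise coprime as before.

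The step I expect to be the main obstacle is the treatment of the prime $7$ in the $C_{ns}^+(49)$ case: the divisibility conditions coming from ``the denominator of $j(E)$ is a $49$th power'' do not by themselves exclude $7\mid x$, and ruling it out genuinely requires input from reduction theory at $7$ (via \Cref{prop: j and Hasse} together with the incompatibility of potentially good ordinary reduction at $7$ with the mod-$49$ non-split Cartan condition, as sketched above). The remaining ingredients — the resultant computations pinning the ``bad primes'' down to $\{2,7\}$ in the non-split cases and to $\{7\}$ in the split case, and the congruence modulo $16$ needed to sharpen $k$ from $\{1,2,4,8\}$ to $\{1,8\}$ — are elementary but must be carried out explicitly.
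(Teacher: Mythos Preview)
Your proposal is correct and follows the same overall architecture as the paper: Zywina's parametrisation, resultant computation to isolate the primes $2$ and $7$, the ``denominator is a $49$th power'' input, and a case-by-case treatment at the bad primes. Two points of execution differ and are worth noting. At $p=2$ the paper observes that $2$ is inert in $\Q(\zeta_7)^+$ so that $3\mid v_2(F(x,y))$ (norm argument), then uses a mod-$16$ check to bound $v_2(F)\le 3$; you achieve the same $v_2(F)\in\{0,3\}$ by a direct congruence. At $p=7$ in the $C_{ns}^+(49)$ case the paper invokes two external facts from \cite{furio24} (potentially good reduction at $7$, and potentially \emph{supersingular} reduction) and then applies \Cref{prop: j and Hasse} to force $v_7(j(E))=0$; you instead argue directly that potentially good \emph{ordinary} reduction (which follows from $v_7(j)>0$ via \Cref{prop: j and Hasse}) produces inertia elements whose eigenvalues on $E[49]$ are $(\chi_{49}(\sigma),1)$ with $\chi_{49}(\sigma)\notin\{\pm1\}$, and such eigenvalue pairs cannot occur in $C_{ns}^+(49)$. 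Your route is self-contained (no appeal to \cite{furio24}), while the paper's is shorter given those citations; both are valid. One small wording fix: the inertia action is upper-triangular with diagonal $(\chi_{49},1)$ rather than literally diagonal, though this does not affect the eigenvalue argument.
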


\begin{proof}
    Let $X =X_{ns}^+(49)$ or $X=X_{ns}^\#(49)$. This curve admits a map to $X_{ns}^+(7)$. If $E/\Q$ is an elliptic curve with $\operatorname{Im}\rho_{E,49}$ contained in either $C_{ns}^+(49)$ or $G_{ns}^\#(49)$, this corresponds to a rational point on the modular curve $X$, and in turn to a rational point on the modular curve $X_{ns}^+(7)$.
    In particular, by \Cref{thm: j-invariants Zywina} there exists a number $t = \frac{x}{y} \in \mathbb{P}^1(\Q)$ such that
    \[
    j(E) = j_{ns}(t) =  \frac{g(x,y)^3}{f(x,y)^7},
    \]
    where 
    \[
    g(x,y) = 4x(x^2+7y^2)(x^2-7xy+14y^2)(5x^2-14xy-7y^2), \quad\quad f(x,y) = x^3-7x^2y+7xy^2+7y^3,
    \]
    and $x, y$ are coprime integers. Since $E$ is not CM, by \Cref{rmk: extreme values} we have $xy\neq 0$.
    The resultant of $f$ and $g$, considered as polynomials in $x$ with coefficients in the ring $\Z[y]$, is $-2^{21} \cdot 7^7 \cdot y^{10}$. It is clear that $f(x,y)$ and $y$ are coprime, hence the greatest common divisor of $f(x,y)$ and $g(x,y)$ must divide $2^{21} \cdot 7^7$. It is easy to see that $f(x,y) \equiv 0 \pmod{49}$ implies $x \equiv y \equiv 0 \pmod{7}$, while we are assuming $(x,y)=1$.
    Similarly, one can check that $f(x,y) \equiv 0 \pmod{2^4}$ implies $x \equiv y \equiv 0 \pmod{2}$, which is again impossible.
    By \Cref{prop: cartan49 denominator,prop: ciuffetti denominators} we know that the denominator of $j(E)$ is a $49$-th power. There is no simplification in the ratio $g(x,y)^3 / f(x,y)^7$ except possibly at the primes $2$ and $7$, so for any prime $p \not \in \{2,7\}$ that divides $f(x,y)$ we must have $v_p(f(x,y)^7) \equiv 0 \pmod{49}$, that is, $v_p(f(x,y))  \equiv 0 \pmod{7}$. This shows that there exists an integer $k$, divisible at most by the primes $2$ and $7$, such that $f(x,y) = k \cdot z^7$ for some integer $z$. The above remarks imply that $k \mid 2^3 \cdot 7$ and $(z, 14)=1$. Note in particular that this implies $v_7(f(x,y))=v_7(k)$ and $v_2(f(x,y))=v_2(k)$.
   
    The splitting field of the dehomogenisation of $f$ is $K=\Q(\zeta_7)^+$, which is a totally real cubic field of class number $1$. In particular, there exists $\alpha \in \OK$ (a root of $f$) such that $f(x,y)$ is the norm of $x-\alpha y$. As $2$ is inert in $K$ we obtain $3 \mid v_2(f(x,y))=v_2(k)$. This implies $k \in \{1,7,2^3,2^3 \cdot 7\}$. 
    By \cite[Corollary 4.2]{furio24} the elliptic curve $E$ has potentially good reduction at $7$, so $v_7(j(E)) \ge 0$. If $7 \mid k$, this implies $0 \le v_7(j(E)) = 3v_7(g(x,y)) - 7v_7(f(x,y)) = 3v_7(g(x,y)) - 7$, and hence $v_7(j(E)) \ge 2$.
    If $X=X_{ns}^+(49)$, we know by \cite[Lemma 6.3]{furio24} that $E$ has potentially supersingular reduction at $7$, and by \Cref{prop: j and Hasse} we see that $v_7(j(E)) = 0$. This implies that in this case $7 \nmid k$.
    If instead $X=X_{ns}^\#(49)$, by \Cref{lemma: 7-adic valuation on X_ns^++(49)} we know that $v_7(j(E)) > 0$, and this can happen only if $7 \mid k$ (to see this, notice that the numerator $g(x,y)^3$ of $j(E)$ must be divisible by $7$, and the congruence $g(x,y) \equiv 0 \pmod{7}$ implies $x \equiv 0 \pmod 7$. The equation $f(x,y) = kz^7$ with $v_7(z)=0$ then gives $7 \mid k$).

    We can repeat the argument for the curve $X=X_{sp}^\#(49)$, which admits a map to $X_{sp}^+(7)$. By \Cref{thm: j-invariants Zywina}, an elliptic curve $E/\Q$ corresponding to a rational point on $X$ has $j$-invariant
    \[
    j(E) = \frac{xg_{sp}(x,y)^3}{(yf_{sp}(x,y))^7},
    \]
    where now
    \[
    g_{sp}(x,y) = (x+y)(x^2-5xy+y^2)(x^2-5xy+8y^2)(x^4-5x^3y+8x^2y^2-7xy^3+7y^4),
    \]
    \[
    f_{sp}(x,y) = x^3-4x^2y+3xy^2+y^3,
    \]
    and $x, y$ are relatively prime integers. One checks that $\gcd(xg_{sp}(x,y), y) = 1$ and $\gcd(x, yf_{sp}(x,y)) = 1$,    
    hence $\gcd(xg_{sp}(x,y), yf_{sp}(x,y)) = \gcd(g_{sp}(x,y),f_{sp}(x,y))$.  Computing the resultant of $f_{sp}(x,y)$ and $g_{sp}(x,y)$ as polynomials in $\Z[y]$, we obtain that $\gcd(f_{sp}(x,y), g_{sp}(x,y)) \mid 7^7 y^{27}$. However, we know that $\gcd(g_{sp}(x,y), y) = 1$, and so $\gcd(f_{sp}(x,y), g_{sp}(x,y)) \mid 7^7$.
    As above, we check that $f_{sp}(x,y) \equiv 0 \pmod{49}$ implies $x \equiv y \equiv 0 \pmod{7}$, hence $49 \nmid f_{sp}(x,y)$ and therefore $\gcd(f_{sp}(x,y), g_{sp}(x,y)) \mid 7$. In particular, applying \Cref{prop: ciuffetti denominators} and reasoning as above, we find that there exist integers $z$ and $w$ such that $f_{sp}(x,y) = k \cdot z^7$, with $k \in \{1,7\}$ and $y=w^7$ (this uses that $y$ and $f_{sp}(x,y)$ are coprime).
\end{proof}

The equations of \Cref{prop: norm equations} are \textit{a priori} weaker constraints on $j(E)$ with respect to the existence of a rational point with that $j$-invariant on the modular curves $X_{ns}^+(49), X_{ns}^\#(49)$ and $X_{sp}^\#(49)$. Indeed, when considering these equations we are simply looking for elliptic curves $E$ such that $\operatorname{Im}\rho_{E,7}$ is contained in either $C_{ns}^+(7)$ or $C_{sp}^+(7)$ and the denominator of $j(E)$ is a $49$-th power.
However, Darmon and Granville (\cite[Theorem 1]{darmon-granville95}) showed that equations of this kind admit a finite number of solutions in the integers, so (just like with rational points on the relevant modular curves) we are still dealing with a problem with a finite number of solutions.

The method used by Darmon and Granville to prove \cite[Theorem 1]{darmon-granville95} is not completely explicit. However, in specific cases, it is possible to relate the solutions of equations of the form $f(x,y) = kz^n$ with those of certain generalised Fermat equations. The result of the next proposition is closely related to \cite{MR2999040}, in which equations of the form $F(x,y) = k z^\ell$ are also studied. In particular, the identity $a^2+4b^3 = -27D k^2c^n$ in the proposition follows immediately from considering invariants and covariants of cubic forms, as in \cite[Equation (4)]{MR2999040}. We have chosen to provide a direct arithmetic proof both to offer a slightly different perspective and because this makes it easier to control certain arithmetic properties of the integers involved in our construction.

\begin{proposition}\label{prop: from norm to Fermat}
    Let $f(t) \in \Z[t]$ be a monic separable polynomial of degree $3$ and let $K$ be its splitting field. Suppose that $[K:\Q] \in \{1,3\}$. Let $F(x,y)$ be the homogenisation of $f$ and let $D$ be the discriminant of $f$.
    Fix $k \in \Z$ and $n \in \N$. Every integer solution $(x,y,z)$ to the equation $F(x,y) = kz^n$ gives rise to an integer solution of the equation $$a^2 + 4b^3 = -27Dk^2 c^n$$
    with $c=z^2$. This solution can be taken to satisfy the following additional properties:
    \begin{enumerate}
        \item Suppose that $[K:\Q]=3$ and let $p$ be a rational prime inert in $K$. If $(x,y,z) \ne (0,0,0)$, then $a \ne 0$ and $v_p(a) \equiv 0 \pmod 3$.
        \item Suppose $(x,y)=1$: then every prime $p$ dividing the gcd of at least two among $a,b,c$ divides $6Dk$.
    \end{enumerate}    
\end{proposition}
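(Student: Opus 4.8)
The plan is to start from the classical fact that for a binary cubic form $F(x,y) = x^3 + a_2 x^2 y + a_1 x y^2 + a_0 y^3$ (the homogenisation of a monic $f$), the ring of invariants is generated by the discriminant, and there is a standard \emph{syzygy} relating the Hessian covariant $H(x,y)$ and the Jacobian covariant $J(x,y) = \partial_x F \, \partial_y H - \partial_y F \, \partial_x H$ (up to scaling): namely an identity of the shape
\[
J(x,y)^2 = -27 D \, F(x,y)^2 + 4 H(x,y)^3 ,
\]
valid over $\Z$ after fixing the usual normalisations, where $D$ is the discriminant of $f$. I would first verify this identity directly: both sides are binary forms of degree $6$ in $(x,y)$ with integer coefficients depending polynomially on $a_0,a_1,a_2$, so it suffices to check it as a polynomial identity in $\Z[a_0,a_1,a_2,x,y]$, which is a finite (computer-assisted) check. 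Given such a solution $(x,y,z)$ with $F(x,y) = k z^n$, I would then set $a := J(x,y)$, $b := H(x,y)$, and $c := z^2$; substituting $F(x,y) = kz^n$ into the syzygy gives exactly
\[
a^2 + 4 b^3 = J(x,y)^2 - 4 H(x,y)^3 = -27 D\, F(x,y)^2 = -27 D k^2 (z^n)^2 = -27 D k^2 c^n ,
\]
establishing the main claim with $c = z^2$ as required.

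For property (1): suppose $[K:\Q] = 3$ and $p$ is inert in $K$, and $(x,y,z) \neq (0,0,0)$. First note that if $(x,y) = (0,0)$ then $z = 0$ too (as $kz^n = F(0,0) = 0$ forces $z=0$ when $k \neq 0$; the degenerate case $k = 0$ contradicts $f$ separable, since then $F$ would have a repeated factor). So $(x,y) \neq (0,0)$. Since $f$ is irreducible over $\Q$ (it has no rational root, as $[K:\Q]=3$), the binary form $F(x,y)$ does not vanish at any nonzero rational point, and the same is true of its Jacobian covariant $J$: the vanishing locus of $J$ consists of the points where the tangent conic degenerates, which over $\overline{\Q}$ are the roots of $F$ together with (here) nothing rational, so $J(x,y) \neq 0$, i.e.\ $a \neq 0$. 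For the congruence $v_p(a) \equiv 0 \pmod 3$, I would argue that $J$ is, up to a constant, the norm form $N_{K/\Q}$ composed with a linear change of variables over $K$ — more precisely, over $K$ the cubic $F$ factors as $\prod_{i}(x - \alpha_i y)$ and the Jacobian covariant factors (up to a rational constant with $v_p = 0$) as $\prod_{i} (x - \beta_i y)$ where $\beta_i \in K$ are the three conjugates of a single element $\beta \in \OK$ generating $K$; hence $a = J(x,y) = (\text{unit at } p) \cdot N_{K/\Q}(x - \beta y)$, and since $p$ is inert in $K$ the valuation of any norm from $K$ is divisible by $3$. This is the step I expect to be the main obstacle: pinning down the precise factorisation of the Jacobian covariant over $K$ and checking that the constant of proportionality is a $p$-adic unit (equivalently, controlling its prime factorisation away from $6Dk$) requires a careful classical computation. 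The paper's remark that the authors prefer a "direct arithmetic proof" suggests they in fact bypass covariants and argue via an explicit factorisation $F(x,y) = N_{K/\Q}(x - \alpha y)$ with $\alpha$ a root of $f$ in $\OK$, deriving the auxiliary quantities $a,b$ by hand from $x - \alpha y$ and its conjugates; I would follow that route, writing $a$ and $b$ as symmetric functions of the three values $x - \alpha_i y$ and reading off the valuation statement from the inertness of $p$.

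For property (2): assume $(x,y) = 1$ and let $p$ be a prime dividing at least two of $a, b, c$. The key identity $a^2 + 4b^3 = -27 D k^2 c^n$ immediately shows that $p \mid a$ and $p \mid b$ together force $p \mid 27 D k^2 c^n$, and similarly any two of the three divide $27Dk^2$ times a power of $c$ — so the only issue is to rule out $p \mid c$ without $p \mid 6Dk$. If $p \mid c = z^2$ then $p \mid z$, so $p \mid F(x,y) = k z^n$; but $p \nmid k$ (else $p \mid 6Dk$ and we are done), so $p \mid F(x,y)$, i.e.\ $F(x,y) \equiv 0 \pmod p$. Since $F$ is the homogenisation of a monic cubic, $F(x,y) \equiv 0 \pmod p$ with $p \nmid y$ would give a root of $f$ mod $p$, call it $x/y \equiv r$; then because $a$ and $b$ are (symmetric) covariant expressions in $(x,y)$, reducing mod $p$ shows $a \equiv J(r,1) y^{\deg J}$ and $b \equiv H(r,1)y^{\deg H}$, and $p$ dividing \emph{both} $a$ and $b$ (the case at hand, since $p$ divides two of $a,b,c$ and we've used $p \mid c$) forces $r$ to be a common root of $F$, $H$ and $J$ modulo $p$; but the resultant of any two of these forms divides a power of the discriminant $D$ (up to the leading-coefficient contributions, which here are units or controlled), so $p \mid D$, again giving $p \mid 6Dk$. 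The remaining subcase $p \mid y$ is excluded because $(x,y)=1$ forces $p \nmid x$, and then $F(x,y) \equiv x^3 \not\equiv 0 \pmod p$, contradicting $p \mid F(x,y)$. This disposes of all cases, so every such $p$ divides $6Dk$.
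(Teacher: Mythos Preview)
Your covariant/syzygy approach for the main identity $a^2+4b^3=-27Dk^2c^n$ is correct, and the paper explicitly acknowledges this route (citing Bennett--Dahmen) before choosing a direct construction instead. The paper sets $\beta_i=(\alpha_{i-1}-\alpha_{i+1})(x-\alpha_i y)$ with indices mod $3$, observes $\beta_1+\beta_2+\beta_3=0$, and takes $b=e_2(\beta_1,\beta_2,\beta_3)$ and $a=(\beta_1-\beta_2)(\beta_2-\beta_3)(\beta_3-\beta_1)$; the identity then drops out as the discriminant of the depressed cubic $T^3+bT\mp\sqrt{D}\,kz^n=\prod(T-\beta_i)$. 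Your $(H,J)$ are these same covariants up to normalisation, but having the $\beta_i$ explicitly is precisely what makes (1) and (2) clean.

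Your arguments for the refinements have genuine gaps. In (1), the claim that the zeros of the Jacobian covariant are the roots of $F$ is false --- the cubic covariant has different roots --- so your $a\neq 0$ argument does not go through; and the aside that ``$k=0$ contradicts $f$ separable'' is a non sequitur ($k$ is an unrelated parameter). You correctly identify the real obstacle in the norm argument (controlling the constant in front of $N_{K/\Q}(x-\beta y)$) and sensibly propose switching to the direct approach; the paper simply notes that the three factors $\beta_i-\beta_{i+1}$ are Galois-conjugate and hence, at an inert prime $p$, either all or none are divisible by $p\mathcal{O}_K$, giving $v_p(a)=3\,v_p(\beta_1-\beta_2)$ with no constant to track. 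The nonvanishing of $a$ follows from showing each $\beta_i$ generates $K$ when $(x,y)\neq(0,0)$.

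For (2), your resultant sketch is plausible but the assertion that the pairwise resultants of $F,H,J$ are pure powers of $D$ is unproved. The paper's argument is sharper: if $p\nmid 6Dk$ and $p\mid c$, choose a prime $\pi$ of $K$ over $p$ dividing some $x-\alpha_i y$; since $\gcd(x-\alpha_i y,\,x-\alpha_j y)$ divides $(\alpha_i-\alpha_j)\mid D$, the other two linear factors are $\pi$-units, so $b\equiv\beta_{i-1}\beta_{i+1}\not\equiv 0\pmod\pi$, contradicting $p\mid b$.
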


\begin{proof}
    First, we associate the trivial solution $(x,y,z) = (0,0,0)$ with the trivial solution $(a,b,c) = (0,0,0)$. From now on, we assume $(x,y,z) \ne (0,0,0)$.
    Consider the factorisation $F(x,y) = (x-\alpha_1 y)(x-\alpha_2y)(x-\alpha_3y)$ in $K$. As $(\alpha_i - \alpha_j)x = \alpha_i(x-\alpha_jy) - \alpha_j(x-\alpha_iy)$, we can write
    \begin{align*}
        0 &= (\alpha_1 - \alpha_2)x + (\alpha_2 - \alpha_3)x + (\alpha_3 - \alpha_1)x \\
        &= (\alpha_3-\alpha_2)(x-\alpha_1y) + (\alpha_1-\alpha_3)(x-\alpha_2y) + (\alpha_2-\alpha_1)(x-\alpha_3y).
    \end{align*}
    Since the Galois group of $K/\Q$ is contained in $A_3$, the discriminant $D$ of $f$ is a square. We can hence define $d=\sqrt{D}$. Set $\beta_i = (\alpha_{i-1}-\alpha_{i+1})(x-\alpha_iy)$, where indices are considered modulo $3$. We notice that for every $i$ we have $\Q(\beta_i) \subseteq K$, that $\beta_1 + \beta_2 + \beta_3 = 0$, and that $\beta_1 \beta_2 \beta_3 = \pm dkz^n$.
    Moreover, the equality $\Q(\beta_i)=K$ holds for every $i$. Indeed, if $K=\Q$ the statement is obvious, while in the case $[K:\Q]=3$ it suffices to prove that $\beta_i \notin \Q$. Suppose that $\beta_i \in \Q$. By definition, the numbers $\beta_1, \beta_2, \beta_3$ are conjugate under the action of $\Gal(K/\Q)$. In particular, this means that $\beta_1=\beta_2=\beta_3=0$, where the last equation follows from the equality $0 = \beta_1 + \beta_2 + \beta_3 = 3\beta_i$. Since $f$ is separable, we know that $\alpha_{i-1}-\alpha_{i+1} \ne 0$, and so $x-\alpha_iy=0$. However, for every $\beta_j$ with $j \ne i$ we conclude in the same way that $x-\alpha_jy = 0$, and if $y \ne 0$ this implies that $\alpha_i = \alpha_j$, giving a contradiction. If instead $y=0$, we obtain that $0=x-\alpha_iy= x$, and this gives the trivial solution $(x,y,z)=(0,0,0)$, which we already excluded.
 
    Define the polynomial $q(T) = (T-\beta_1)(T-\beta_2)(T-\beta_3)$. Expanding, we find $q(T) = T^3 +bT \mp dkz^n$, where $b=\beta_1\beta_2+\beta_2\beta_3+\beta_3\beta_1$ is an integer. The splitting field of $q$ is contained in $K$, hence the discriminant of $q$ is a square. In particular, there exists an integer $a=(\beta_1-\beta_2)(\beta_2-\beta_3)(\beta_3-\beta_1)$ such that $a^2 = -4b^3 -27d^2k^2z^{2n}$, which is the equation in the statement of the theorem.
    We now show that the solution thus constructed satisfies the additional properties given in the statement:
    \begin{enumerate}
        \item Since $(x,y,z) \ne (0,0,0)$, we know that the splitting field of $q(T)$ is $K$, so $q$ is separable (because it is irreducible), which implies that $a \ne 0$.
        Suppose that $v_p(a) > 0$. By assumption, $p$ is inert in $K$. Since $p \mid a = (\beta_1-\beta_2)(\beta_2-\beta_3)(\beta_3-\beta_1)$ and $p$ is prime in $\mathcal{O}_K$, we have $v_p(\beta_i - \beta_{i+1})>0$ for some $i \in \{1,2,3\}$. However, all $\beta_i-\beta_{i+1}$ are conjugate under the action of Galois, so every factor $\beta_i - \beta_{i+1}$ has the same valuation. It follows as desired that $v_p(a) = v_p(\beta_1-\beta_2) + v_p(\beta_2-\beta_3) + v_p(\beta_3-\beta_1) = 3 v_p(\beta_1 - \beta_2)$.
        \item Suppose that $p$ is a prime such that $p \nmid 6Dk$. It is easy to notice that if $p$ divides two among $a,b,c$, then it also divides the third. Since $p \mid c = z^2$, there exists a prime $\pi_i$ of $K$ above $p$ dividing $\beta_i$, and so $\pi_i \mid x-\alpha_iy$, because $p \nmid D$. We have 
        $$\gcd(x-\alpha_iy, x-\alpha_jy) \mid \gcd((\alpha_i-\alpha_j)x, (\alpha_i-\alpha_j)y) = \alpha_i - \alpha_j \mid D,$$
        and so $\pi_i \nmid \beta_{i-1}\beta_{i+1}$. This implies that $b \equiv \beta_{i-1}\beta_{i+1} \not\equiv 0 \pmod {\pi_i}$, which contradicts our assumption $p \mid b$. \qedhere
    \end{enumerate}    
\end{proof}

\begin{corollary}\label{cor: specific norm equations}
    Let $k \in \{1,8\}$ and let $n$ be a positive integer. 
    \begin{enumerate}
        \item For every triple $(x,y,z)$ of integers satisfying one of the equations
        \begin{align*}
            x^3-7x^2y+7xy^2+7y^3 = k \cdot z^n \qquad \text{or} \qquad x^3-4x^2y+3xy^2+y^3 = z^n,
        \end{align*}
        there exists an integer solution $(a,b,c)$ of the equation $a^2 + 28b^3 = -27c^n$ with $c=z^2$ which satisfies:
        \begin{enumerate}
            \item if $(x,y) \ne (0,0)$ and $p \not\equiv \pm 1 \pmod 7$ is a prime, then $v_p(a) \equiv 0 \pmod 3$;
            \item if $x,y$ are coprime and $n>4$, then $c$ is coprime with $42ab$;
        \end{enumerate}
        
        \item For every triple $(x,y,z)$ of integers satisfying one of the equations
        \begin{align*}
            x^3-7x^2y+7xy^2+7y^3 = 7k \cdot z^n \qquad \text{or} \qquad x^3-4x^2y+3xy^2+y^3 = 7z^n,
        \end{align*}
        there exists an integer solution $(a,b,c)$ of the equation $a^2 + 196b^3 = -27c^n$ with $c=z^2$ which satisfies:
        \begin{enumerate}
            \item if $(x,y) \ne (0,0)$ and $p \not\equiv \pm 1 \pmod 7$ is a prime, then $v_p(a) \equiv 0 \pmod 3$;
            \item if $x,y$ are coprime and $n>4$, then $c$ is coprime with $42ab$;
        \end{enumerate}
    \end{enumerate}
\end{corollary}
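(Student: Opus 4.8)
The plan is to deduce the corollary from \Cref{prop: from norm to Fermat}, applied to the two monic cubics $f_1(t)=t^3-7t^2+7t+7$ and $f_2(t)=t^3-4t^2+3t+1$ occurring in \Cref{prop: norm equations}, followed by an explicit rescaling of the resulting equation. The first thing I would record is the arithmetic of these polynomials: both are irreducible with splitting field $K=\Q(\zeta_7)^+$ — indeed $\operatorname{disc}(f_1)=2^6\cdot 7^2$ and $\operatorname{disc}(f_2)=7^2$ are squares, so $K$ is a cyclic cubic field, and the only such field with discriminant dividing $2^6\cdot 7^2$ is $\Q(\zeta_7)^+$ (of conductor $7$), since the conductor of a cyclic cubic field involves only primes $\equiv 1\pmod 3$. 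In $K$ the prime $7$ is totally ramified, $7\OK=\mathfrak p^3$, while $2$ is inert (its order in $(\Z/7\Z)^\times/\{\pm 1\}$ is $3$). Writing $F_i$ for the homogenisation of $f_i$ and $D_i=\operatorname{disc}(f_i)$, \Cref{prop: from norm to Fermat} attaches to every solution of $F_i(x,y)=k'z^n$ — with $k'=k$ in part (1) and $k'=7k$ in part (2) — an integer solution of $a_0^2+4b_0^3=-27\,D_i(k')^2c^n$ with $c=z^2$, given by $a_0=\prod_{\mathrm{cyc}}(\beta_i-\beta_{i+1})$, $b_0=\sum_{\mathrm{cyc}}\beta_i\beta_{i+1}$ and $\beta_i=(\alpha_{i-1}-\alpha_{i+1})(x-\alpha_i y)$, where $\alpha_1,\alpha_2,\alpha_3$ are the roots of $f_i$.

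The heart of the argument will be to extract extra $2$- and $7$-divisibilities of $a_0$ and $b_0$, so that the constant $-27\,D_i(k')^2$ on the right collapses to $-27$. Proceeding as in the proof of \Cref{prop: from norm to Fermat}(1): the elements $\alpha_{i-1}-\alpha_{i+1}$ are Galois-conjugate with product $\pm\sqrt{D_i}$, and the $x-\alpha_i y$ are Galois-conjugate with product $F_i(x,y)$; since $\mathfrak p$ and the prime $(2)$ are the unique (hence Galois-stable) primes of $K$ above $7$ and $2$, all the $\alpha_{i-1}-\alpha_{i+1}$ (resp.\ all the $x-\alpha_i y$) share the same $v_{\mathfrak p}$- and $v_{(2)}$-valuation, equal to $\tfrac13 v_{\mathfrak p}(\sqrt{D_i})$ and $\tfrac13 v_{\mathfrak p}(F_i(x,y))$ (and analogously for $v_{(2)}$). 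Plugging in the values of $D_i$ and $k'$ gives lower bounds for $v_{\mathfrak p}(\beta_i)$ and $v_{(2)}(\beta_i)$, hence for the corresponding valuations of $a_0$ and $b_0$; and since $a_0,b_0\in\Z$, with $v_{\mathfrak p}(m)=3v_7(m)$ and $v_{(2)}(m)=v_2(m)$ for $m\in\Z$, these $\OK$-valuation bounds round up to honest divisibilities in $\Z$. A short case analysis then produces divisors $\delta_a\mid a_0$ and $\delta_b\mid b_0$ equal to $(56,28)$ and $(448,112)$ for the two $f_1$-equations of part (1), to $(7,7)$ for the $f_2$-equation of part (1), and to $(392,196)$, $(3136,784)$, $(49,49)$ for the three equations of part (2); in each case one checks directly that $a:=a_0/\delta_a$ and $b:=b_0/\delta_b$ satisfy $a^2+28b^3=-27c^n$ (part (1)) or $a^2+196b^3=-27c^n$ (part (2)).

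Finally I would check properties (a) and (b) by bookkeeping. For (a) with $p\ne 7$: a prime $p\not\equiv\pm 1\pmod 7$ is inert in $K$, so \Cref{prop: from norm to Fermat}(1) gives $3\mid v_p(a_0)$; as $p\nmid\delta_a$ for $p\ne 2$, while $v_2(\delta_a)\in\{0,3,6\}$, this yields $3\mid v_p(a)$. For $p=7$ one uses that $7\nmid z$ — which holds when $\gcd(x,y)=1$ and $n\ge 2$, since $f_1\equiv t^3$ and $f_2\equiv(t+1)^3\pmod 7$ force $v_7(F_i(x,y))\le 1$ for coprime $(x,y)$ — and then reduces $a^2+28b^3=-27c^n$ (or its $196$-analogue) modulo $7$ to get $a^2\equiv -27c^n\not\equiv 0\pmod 7$, whence $7\nmid a$. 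For (b): the same congruences show that for coprime $(x,y)$ one has $2,3\nmid F_i(x,y)$, with the single exception that $v_2(f_1(x,y))=3$ exactly when $2\mid x+y$; hence, since $n>4$, comparing $v_2$ and $v_7$ on both sides of $F_i(x,y)=k'z^n$ forces $2,3,7\nmid z$, so $c=z^2$ is coprime to $42$. By \Cref{prop: from norm to Fermat}(2) any prime dividing the gcd of two of $a_0,b_0,c$ divides $6D_ik'$, hence lies in $\{2,3,7\}$; since $c$ is coprime to $2,3,7$ it is coprime to $a_0b_0$, and a fortiori to $ab$. Therefore $c$ is coprime to $42ab$, as required.

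The only genuine difficulty is the rescaling step — pinning down, in each of the six cases, the exact powers of $2$ and $7$ dividing $a_0$ and $b_0$. Everything else is a formal consequence of \Cref{prop: from norm to Fermat} together with the ramification behaviour of $2$ and $7$ in $\Q(\zeta_7)^+$.
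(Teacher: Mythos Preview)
Your approach is essentially the paper's: apply \Cref{prop: from norm to Fermat} over $K=\Q(\zeta_7)^+$, then exploit that $7$ is totally ramified and $2$ is inert in $K$ to extract the exact powers of $2$ and $7$ from $a_0$ and $b_0$, and rescale. The one organizational difference is that the paper first eliminates the $k=8$ case by a linear substitution: since $F_1(x,y)\equiv 0\pmod 8$ forces $x\equiv y\pmod 2$, writing $x=2w+y$ gives $F_1(2w+y,y)=8(w^3-2w^2y-wy^2+y^3)$, so one is reduced to three equations $f_i(x,y)=z^n$ with $k=1$ and discriminants $2^6\cdot 7^2,\,7^2,\,7^2$. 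This trades your six-case table of divisors $(\delta_a,\delta_b)$ for a shorter uniform argument; your direct bookkeeping is equally valid and gives the same output.

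One small point to flag: your treatment of $p=7$ in (a) invokes $7\nmid z$, which you only justify under the extra hypothesis $\gcd(x,y)=1$, whereas the statement asks only for $(x,y)\ne(0,0)$. The paper glosses over this too --- it asserts that every $p\not\equiv\pm 1\pmod 7$ is inert in $K$, which is false for $p=7$ (it is ramified), so \Cref{prop: from norm to Fermat}(1) does not literally apply there. In the applications one always has $\gcd(x,y)=1$ (coming from \Cref{prop: norm equations}) and only $p=3$ is ever used, so this does not affect anything downstream; but as written, neither argument fully covers the case $p=7$ under the bare hypothesis $(x,y)\ne(0,0)$.
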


\begin{proof}
    We start by proving the first statement. First, in the case $k=8$ and $f_1(x,y) = x^3-7x^2y+7xy^2+7y^3$, we notice that $x,y$ must be either both even or both odd. If they are both even, we can write $(2x,2y)$ in place of $(x,y)$ and we reduce to the case $k=1$. If they are both odd, we can write $x=2w+y$ for some integer $w$, and so
    \begin{align*}
        f_1(2w+y,y) &= (2w+y)^3 - 7(2w+y)^2y + 7(2w+y)y^2 + 7y^3 = 8(w^3 - 2w^2y - wy^2 + y^3),
    \end{align*}
    and so we obtain the equation $f_2(w,y) = w^3 - 2w^2y - wy^2 + y^3 = z^n$. We can therefore assume that $k=1$ and prove the statement for the three equations
    \begin{align*}
        f_1(x,y) &:= x^3-7x^2y+7xy^2+7y^3 = z^n, \\
        f_2(x,y) &:= x^3 - 2x^2y - xy^2 + y^3 = z^n, \\
        f_3(x,y) &:= x^3-4x^2y+3xy^2+y^3 = z^n.
    \end{align*}
    The splitting field of each of the polynomials $f_i(x,1)$ is $K=\mathbb{Q}(\zeta_7)^+$, in which every prime $p \not\equiv \pm 1 \pmod 7$ is inert.
    We now compute the discriminant of the polynomials $f_1(x,1), \; f_2(x,1), \; f_3(x,1)$, which are respectively $2^6 \cdot 7^2$, $7^2$, and $7^2$.
    By Proposition \ref{prop: from norm to Fermat} we know that every solution $(x,y,z)$ gives rise to a triple $(a_0,b_0,c)$ such that $a_0^2+4b_0^3 = -2^6 \cdot 3^3 \cdot 7^2 \cdot c^n$ in the case of $f_1$, or $a_0^2+4b_0^3 = -3^3 \cdot 7^2 \cdot c^n$ in the cases of $f_2, f_3$, with $c=z^2$ in all cases.
    Since $p \not\equiv \pm 1 \pmod 7$ is inert in $K$, we can also assume $v_p(a_0) \equiv 0 \pmod 3$.
    Write $f$ for $f_1, f_2, f_3$, according to which equation we are solving.
    Since $7$ is totally ramified in the splitting field $K=\Q(\zeta_7)^+$ of $f(x,1)$, we can write $7 = \pi^3$ for a prime $\pi$ of the ring of integers of $\mathcal{O}_K$. We then have $\pi \mid \alpha_i - \alpha_j$ for every pair $\alpha_i, \alpha_j$ of roots of $f$. In particular, $\pi$ divides the values $\beta_i = (\alpha_{i-1}-\alpha_{i+1})(x-\alpha_iy)$, and by the proof of \Cref{prop: from norm to Fermat} we have $7 \mid b_0$, since $b_0=\beta_1\beta_2 + \beta_2\beta_3 + \beta_1\beta_3$. Similarly, in the case of $f_1$ we have $4 \mid b_0$: we have $2 \mid (\alpha_{i-1}-\alpha_{i+1}) \mid \beta_i$ for every $i$ (note that $2$ is prime in $K$), hence $4 \mid b_0=\beta_1\beta_2+\beta_2\beta_3+\beta_3\beta_1$. By congruence arguments, one shows easily that in the case of $f_1$ there exist integers $a,b$ such that $a_0 = 2^3 \cdot 7 \cdot a$ and $b_0 = 2^2 \cdot 7 \cdot b$, and so $a^2+28b^3 = -27c^n$.
    In the cases $f_2, f_3$ we find instead $a_0 = 7 \cdot a$ and $b_0 = 7 \cdot b$, and so we obtain again $a^2+28b^3 = -27c^n$.
    Note that in all cases the $p$-adic valuation of $a$ is the same as the $p$-adic valuation of $a_0$ for every prime $p \not\equiv \pm 1 \pmod 7$, hence it is congruent to $0$ modulo $3$.\\
    The second statement is proved similarly. As in the first part, we reduce to working with the three equations $f_i(x,y) = 7z^n$ for $i=1, 2, 3$. As above, we have solutions $(a_0, b_0, c)$ to the equation $a_0^2+4b_0^3 = -2^6 \cdot 3^3 \cdot 7^4  \cdot c^n$ in the case of $f_1$, or $a_0^2+4b_0^3 = -3^3 \cdot 7^4 \cdot c^n$ in the cases of $f_2, f_3$, with $c=z^2$ in all cases. By the same argument as above, we have $7 \mid b_0$, and in the case $i=1$  also $4 \mid b_0$. Hence, there exist integers $a_1,b_1$ such that $a_1^2 + 28b_1^3 = -27 \cdot 7^2 \cdot c^n$ in all cases. More precisely, we have
    \[
    a_0=2^3 \cdot 7 \cdot a_1, \ b_0=2^2 \cdot 7 \cdot b_1 \quad \text{ for }i=1 \qquad \text {and} \qquad a_0=7a_1, \ b_0=7b_1 \quad \text{ for }i=2,3.
    \]
    We easily see that $7$ must divide $a_1$, and so that $7 \mid b_1$. Writing $a_1=7a$ and $b_1=7b$ we obtain $a^2+196b^3= -27c^n$ as desired.
    Moreover, we again have that $v_p(a)=v_p(a_1)=v_p(a_0)$ is congruent to $0$ modulo $3$ for every $p \not\equiv \pm 1 \pmod 7$. \\
    We now show that if $x$ and $y$ are coprime, then $c$ is coprime with $42ab$.
    If $p$ is a prime dividing $(c, 42ab)$, by \Cref{prop: from norm to Fermat} we have $p \in \{2,3,7\}$. However, one can check that for all coprime integers $x,y$ and for every $i=1,2,3$ we have $f_i(x,y) \not\equiv 0$ modulo $16,3,49$, and since $n>4$ this gives $p \nmid z$ (which in turn implies $p \nmid c$).
\end{proof}

We now focus on the case $n=7$, which is the one that arises from \Cref{prop: norm equations}: we will consider the equations $a^2+28b^3=27c^7$ and $a^2+196b^3=27c^7$, where $c=-z^2$.
We will often need to refer to the arithmetic conditions in the previous statement, so we introduce the following definition for ease of reference:

\begin{definition}
    We say that a solution $(a, b, c)$ to one of the equations $a^2+28b^3=27c^7$ or $a^2+196b^3=27c^7$ satisfies $(\star)$ if the following conditions hold:
    \[
    (\star) \qquad  \begin{cases} \; \begin{array}{l}
        (c, 42ab) = 1 \\
        v_3(a)=0 \text{ or } v_3(a) \geq 3
    \end{array}
    \end{cases}
    \]
    We will often write that $(a,b,c)$ is a $(\star)$-solution.
\end{definition}

Note in particular that $(\star)$ implies $(a,b,c)=1$, so any $(\star)$-solution is primitive. Moreover, the first condition in $(\star)$ gives that $c$ is odd, which (together with \eqref{eq: Cns} or \eqref{eq: C sharp}) shows that $a$ is automatically odd. We will freely use these facts in the rest of the paper. We will show:

\begin{theorem}\label{thm: solutions twisted Fermat}\phantom{~}
    \begin{enumerate}
        \item The $(\star)$-solutions of the equation
        \begin{equation}\label{eq: Cns}
            a^2+28b^3=27c^7
        \end{equation}
        are precisely
        \[
        (\pm 1,-1, -1), \quad (\pm 27, -3, -1), \quad (\pm 2521, -61, -1).
        \]
        \item Assume \Cref{conj: XE3}. The $(\star)$-solutions of the equation
        \begin{equation}\label{eq: C sharp}
            a^2+196b^3=27c^7
        \end{equation}
        are precisely $(\pm 13, 196, -1)$.
    \end{enumerate}
\end{theorem}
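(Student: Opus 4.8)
# Proof Proposal for Theorem \ref{thm: solutions twisted Fermat}

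\textbf{Overall strategy.} The plan is to follow the Poonen–Schaefer–Stoll paradigm outlined in steps (5)–(6) of the introduction. Given a $(\star)$-solution $(a,b,c)$ of \eqref{eq: Cns} or \eqref{eq: C sharp}, I would first write down an explicit elliptic curve $E_{(a,b,c)}/\Q$ whose invariants $c_4, c_6$ (or equivalently whose $j$-invariant and minimal discriminant) encode $a$ and $b$ — concretely, a curve of the shape $y^2 = x^3 - 3b\,x + a$ (up to twisting and clearing denominators), so that its discriminant is essentially $-432(a^2+4b^3) = \mp 432\cdot 27 c^7$ (resp.\ with the factor $196$), a near-perfect seventh power times a controlled bad part. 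The condition $(\star)$ is precisely what guarantees that the bad primes of $E_{(a,b,c)}$ are confined to $\{2,3,7\}$ (and that the reduction type at the remaining primes dividing $c$ is multiplicative with $7 \mid v_p(\Delta)$), which is exactly the input needed for level-lowering.

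\textbf{Level-lowering and the list of newforms.} Next I would analyse $\bar\rho_{E_{(a,b,c)},7}$: using Ribet's level-lowering theorem together with a careful study of the local behaviour at $2$, $3$, $7$ (Tate curve / inertia arguments as in Proposition \ref{prop: j and Hasse} and Lemma \ref{lemma: 7 isogenies}), one shows that $\bar\rho_{E_{(a,b,c)},7}$ arises from a weight-$2$ newform of some small, explicitly bounded level $N \mid 2^a 3^b 7^c$. I would then enumerate these newforms. The crucial reduction — and here several auxiliary tools come in (the symplectic/anti-symplectic criterion of Kraus–Oesterlé to match Weil pairings, computation of the fields $\Q(E_f[7])$, discarding forms with non-surjective or otherwise incompatible mod-$7$ image) — is to show that only newforms with \emph{rational} Hecke eigenvalues survive, i.e.\ $\bar\rho_{E_{(a,b,c)},7} \cong E_f[7]$ for $E_f$ in a short explicit list of elliptic curves over $\Q$. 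For \eqref{eq: Cns} the relevant $E_f$ should be (twists of) three specific curves; for \eqref{eq: C sharp} there should be one more, the one responsible for Conjecture \ref{conj: XE3}.

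\textbf{Twists of $X(7)$ and Chabauty–Coleman.} Given the finite list of curves $E_f$, elliptic curves $E/\Q$ with $E[7] \cong E_f[7]$ (symplectically or anti-symplectically) correspond to rational points on the two quadratic twists of the Klein quartic $X(7)$ attached to $E_f$ — a genus-$3$ plane quartic. Each $(\star)$-solution thus yields a rational point on one of these finitely many quartics, and conversely each such rational point can be traced back (via its $j$-invariant and the relation to $j_{ns}$ or $j_{sp}$) to at most finitely many solution triples. So it remains to determine $C'(\Q)$ for each twist $C'$. For all twists except the one in Conjecture \ref{conj: XE3}, I would do this by the combination advertised in step (6): a $2$-descent on the Jacobian to pin down the Mordell–Weil rank (aiming for rank $\le 2$, ideally $<3$), a Mordell–Weil sieve to cut down residual cosets, and Coleman's effective Chabauty bound at a well-chosen prime of good reduction to conclude that the rational points found are all of them. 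Pulling these back and matching against the explicit parametrisations $j_{ns}, j_{sp}$ then yields exactly the triples listed in part (1), and — conditionally on Conjecture \ref{conj: XE3}, which supplies $C(\Q)$ for the last twist — the triples in part (2). I expect the \textbf{main obstacle} to be twofold: first, the bookkeeping in the level-lowering step — ruling out all irrational newforms and all the ``wrong'' rational curves $E_f$ requires juggling inertia types at $2,3,7$, the symplectic criterion, and possibly global torsion-field computations, and it is easy to overlook a case; second, verifying the Chabauty–Coleman hypotheses for the genus-$3$ quartics (getting the rank low enough and finding a prime where the Coleman bound is sharp), which is where the computer-assisted MAGMA calculations carry the real weight.
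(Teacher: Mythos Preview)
Your proposal is correct and follows essentially the same route as the paper: attach a Frey-type curve to each $(\star)$-solution, level-lower its mod-$7$ representation to a short list of newforms of level $2^f\cdot 7^2$, reduce (via inertia at $7$, reducibility/CM, and quadratic-twist relations) to a handful of rational elliptic curves $E_i$, and then determine the rational points on the corresponding twists $X_{E_i}^{\pm}(7)$ of the Klein quartic by $2$-descent, Mordell--Weil sieve, and Chabauty--Coleman. The one concrete trick you do not name explicitly is the quadratic twist by $-3$ of the Frey curve (\Cref{lemma: twists of Eabc Fabc tilde}), which removes $3$ entirely from the conductor and keeps the newform levels in $\{196,392,784\}$; and the main workhorse for discarding newforms is the $7$-inertia shape $\chi^{1-\alpha}\oplus\chi^{\alpha}$ with $\alpha=(p-1)v_7(\Delta_{\min})/12$ rather than the symplectic criterion, the latter being used instead to show that $X_{E_2}^-$ and $X_{E_3}^-$ have no $\Q_2$-points.
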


\begin{remark}\label{rmk: small solutions}
There exist primitive solutions of Equations \eqref{eq: Cns} and \eqref{eq: C sharp} that are not $(\star)$-solutions. A computer search for primitive solutions to $a^2 + 28b^3 = 27c^7$ with $c \neq 0$, $|c| \leq 300$ found
\[
(\pm1, -1, -1), \quad (\pm27, -3, -1), \quad (\pm 2521, -61, -1), \quad (\pm 2\cdot 181 \cdot 313 \cdot 317, \, 3593, \, 90),
\]
the last of which is not a $(\star)$-solution. To find these solutions, we looped over $c \in [-300, 300] \cap \mathbb{Z}$ and computed, for each fixed $c$, the integral points on the elliptic curve $a^2+28b^3=27c^7$. In particular, the above list contains all primitive solutions with $|c| \leq 300$.
We also record the solution
\[
(\pm 2^{13} \cdot 5 \cdot 59957, -2^8 \cdot 1867, 2^4 \cdot 17)
\]
because, while not primitive, it corresponds to a rational point on one of the genus-3 curves we will have to study in \Cref{sect: genus 3}. A similar search for primitve solutions of $a^2+196b^3 = 27c^7$ found $(\pm 13, -1, -1)$ and $(\pm 16074, 39, 10)$. The second one is not a $(\star)$-solution.
\end{remark}

We can use \Cref{thm: solutions twisted Fermat} to determine the elliptic curves over $\mathbb{Q}$ having mod-49 representation contained in $C_{ns}^+(49)$. Assuming in addition \Cref{conj: XE3}, we can do the same for the exotic groups $G_{ns}^\#(49), G_{sp}^\#(49)$:

\begin{corollary}\label{cor: 7-adic classification}
    Let $G \in \{C_{ns}^+(49), G_{ns}^\#(49), G_{sp}^\#(49)\}$ and let $E/\Q$ be an elliptic curve with $\operatorname{Im}\rho_{E, 49}$ contained in $G$ up to conjugacy.
    \begin{enumerate}
        \item If $G=C_{ns}^+(49)$, then
        \[
        j(E) \in \{ -2^{18} 3^3 5^3 23^3 29^3,\; -2^{15} 3^3 5^3 11^3,\;  -2^{18} 3^3 5^3 11^3,\;  -2^{15}, \; 2^6 3^3, \; 2^6 5^3, \; 2^3 3^3 11^3 \}.
        \]
        In all cases, $E$ has complex multiplication.
        \item Assume \Cref{conj: XE3}. If $G=G_{ns}^\#(49)$ or $G=G_{sp}^\#(49)$, then $j(E)=0$ and hence $E$ has complex multiplication.
    \end{enumerate}
\end{corollary}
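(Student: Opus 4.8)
The plan is to chain together the reductions established in the previous sections and then simply read off the possible $j$-invariants; no new idea is required beyond \Cref{thm: solutions twisted Fermat}. We treat the non-CM and CM cases separately.

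\emph{Non-CM case.} Suppose $E/\Q$ has no complex multiplication and $\operatorname{Im}\rho_{E,49}\subseteq G$. By \Cref{prop: norm equations} we obtain coprime integers $x,y,z$ (together with $w$ in the split case) solving one of the cubic norm equations listed there, with $j(E)=j_{ns}(x/y)$ or $j(E)=j_{sp}(x/y)$. Since $(x,y)=1$ and the exponent $7$ exceeds $4$, \Cref{cor: specific norm equations} promotes this to a $(\star)$-solution of \eqref{eq: Cns} (for $G=C_{ns}^+(49)$, and for $G=G_{sp}^\#(49)$ when $k=1$) or of \eqref{eq: C sharp} (for $G=G_{ns}^\#(49)$, and for $G=G_{sp}^\#(49)$ when $k=7$). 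By \Cref{thm: solutions twisted Fermat} — unconditionally for $C_{ns}^+(49)$, and assuming \Cref{conj: XE3} in the two $\#$ cases — this solution lies in the explicit finite list given there, and in every listed solution $c=-1$, hence $z^2=1$ (recall $c=-z^2$), i.e.\ $z=\pm1$. Thus $(x,y)$ is a coprime solution of a Thue equation $F(x,y)=\pm k$, where $F$ is one of the cubic forms of \Cref{prop: norm equations} and $k$ ranges over the small explicit set allowed there (and, in the split case, with the additional constraint $y=w^7$, so that $y=\pm1$). Each such Thue equation has finitely many solutions, which one enumerates directly; evaluating $j_{ns}$ or $j_{sp}$ at the resulting finitely many values of $t=x/y$, together with the boundary values $j_{ns}(0)=0$, $j_{ns}(\infty)=8000$, $j_{sp}(0)=0$ of \Cref{rmk: extreme values} ($t=\infty$ being irrelevant in the split case, where $j_{sp}(\infty)=\infty$), one checks that the output is always a CM $j$-invariant. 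As $E$ was assumed non-CM, this is a contradiction, and we conclude that no non-CM $E/\Q$ has $\operatorname{Im}\rho_{E,49}$ contained in $G$.

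\emph{CM case.} Now $j(E)$ is one of the finitely many rational CM $j$-invariants, and it remains to decide which of them are compatible with $\operatorname{Im}\rho_{E,49}\subseteq G$. For $G=C_{ns}^+(49)$, reducing modulo $7$ forces $\operatorname{Im}\rho_{E,7}\subseteq C_{ns}^+(7)$, which for a CM curve happens precisely when the rational prime $7$ is inert in the corresponding imaginary quadratic order (equivalently, $E$ has potentially supersingular reduction at $7$, cf.\ \Cref{prop: j and Hasse}; if $7$ is ramified or split the mod-$7$ image lies in a different maximal subgroup). Conversely, if $7$ is inert in that order then $\operatorname{Im}\rho_{E,7^\infty}$ is contained, up to conjugacy, in $C_{ns}^+$, hence $\operatorname{Im}\rho_{E,49}\subseteq C_{ns}^+(49)$, and this persists under quadratic twisting because $-I\in C_{ns}(49)$. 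Running through the rational CM $j$-invariants, exactly the seven listed in the statement arise from orders in which $7$ is inert, which proves part (1). For $G\in\{G_{ns}^\#(49),G_{sp}^\#(49)\}$ the same strategy applies, except that the mod-$7$ condition ($\operatorname{Im}\rho_{E,7}$ inside $C_{ns}^+(7)$, resp.\ $C_{sp}^+(7)$) must be combined with the strictly stronger mod-$49$ condition; since the groups in case (iv) of \Cref{thm: rszb} reduce modulo $7$ to a full Cartan normaliser but are \emph{not} themselves contained in the corresponding level-$49$ Cartan normaliser, this step is carried out by a finite direct check — for instance via the canonical models of these modular curves and their $j$-maps, in the style of \Cref{lemma: 7-adic valuation on X_ns^++(49)}, or using the known CM-image data — yielding $j=0$ as the only surviving value (realised by a suitable sextic twist of $y^2=x^3+1$, whose extra automorphisms are precisely what allows its mod-$7$ image to lie in $C_{ns}^+(7)$ despite $7$ splitting in $\Z[\zeta_3]$). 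Combining the two cases gives the corollary.

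\emph{Main obstacle.} The genuinely hard input is entirely packaged in \Cref{thm: solutions twisted Fermat} (and \Cref{conj: XE3} for part (2)); what remains is largely bookkeeping, but two points need care. First, the map $(x,y,z)\mapsto(a,b,c)$ of \Cref{prop: from norm to Fermat} is many-to-one and absorbs powers of the ``bad'' primes $2$ and $7$, so rather than inverting it literally one should recover the candidate values of $t$ by solving the finitely many Thue equations $F(x,y)=\pm k$ head-on (and then checking, for each solution, that $j_{ns}$ or $j_{sp}$ takes a CM value there). Second, the CM analysis for $G_{ns}^\#(49)$ and $G_{sp}^\#(49)$ does not reduce to the clean inert/split/ramified dichotomy available for $C_{ns}^+(49)$, because these groups are detected only at level $49$; this step therefore has to be done by an explicit finite computation rather than deduced from the splitting behaviour of $7$ in the CM order.
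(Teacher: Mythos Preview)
Your approach is largely correct and runs parallel to the paper's, but differs in two places worth flagging.

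First, in the non-CM case you assert that the finitely many Thue solutions always produce CM $j$-invariants, which would yield the contradiction immediately. The paper does not make this claim: instead, for each candidate $j$-invariant that is \emph{not} CM it computes the adelic Galois image (via Zywina's algorithm) and verifies that $|\operatorname{Im}\rho_{E,49}| = |\operatorname{Im}\rho_{E,7}| \cdot 7^4$, which is too large to sit inside $G$ (note $7^4 \nmid \#G$). The paper's wording (``find in each case'') suggests that non-CM candidates \emph{do} occur among the Thue solutions and need this extra elimination, so your stronger assertion should either be verified computationally or replaced by the Zywina check.

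Second, for the CM case of $C_{ns}^+(49)$ you argue directly via CM theory that containment is equivalent to $7$ being inert in the CM order, and list the seven such class-number-one orders. The paper instead reads the CM list off the Thue solutions themselves (tacitly applying the equations of \Cref{prop: norm equations} to all $E$, not just non-CM ones---which is legitimate, since the non-CM hypothesis there was only used to exclude $t\in\{0,\infty\}$). Your route here is arguably cleaner and decoupled from the Thue computation. For $G_{sp}^\#(49)$ both approaches require a further elimination step: the paper finds four CM candidates $\{0,\,2^4 3^3 5^3,\,2^{15}3^3,\,2^{15}\cdot 3\cdot 5^3\}$ and then uses the known CM-image results to show that the three with $j\neq 0$ have full split-Cartan-normaliser image modulo $49$, hence are not contained in $G_{sp}^\#(49)$.

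One minor gap: your parenthetical ``so that $y=\pm1$'' in the split case is not justified by $c=-1$ alone. The Fermat equation constrains $z$, not $w$; the condition $y=w^7$ does not force $w=\pm1$. What you should do is solve the Thue equation $f_{sp}(x,y)=\pm k$, obtain finitely many pairs $(x,y)$, and then discard those for which $y$ is not a seventh power.
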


\begin{remark}
    The $j$-invariants in case 1 correspond to elliptic curves having complex multiplication respectively by the rings
    \[
    \mathbb{Z}\left[ \frac{1+\sqrt{-163}}{2} \right], \;\; \mathbb{Z}\left[ \frac{1+\sqrt{-67}}{2} \right], \;\; \mathbb{Z}\left[ \frac{1+\sqrt{-43}}{2} \right], \;\; \mathbb{Z}\left[ \frac{1+\sqrt{-11}}{2} \right], \;\; \mathbb{Z}[i], \;\; \mathbb{Z}[\sqrt{-2}], \;\; \mathbb{Z}[2i].
    \]
    The prime $7$ is inert in these rings, so the general theory of complex multiplication shows that the mod-49 representation of an elliptic curve with CM by these rings does in fact land inside the normaliser of a non-split Cartan subgroup.

    For the cases in (2), we read from \cite[\href{https://beta.lmfdb.org/ModularCurve/Q/49.147.9.a.1/}{modular curve 49.147.9.a.1}]{lmfdb} and \cite[\href{https://beta.lmfdb.org/ModularCurve/Q/49.196.9.a.1/}{modular curve 49.196.9.a.1}]{lmfdb} that the relevant modular curves possess a rational point with $j=0$.
\end{remark}

\begin{proof}
An elliptic curve as in the statement leads to a solution $(x,y,z)$ of one of the equations in the statement of \Cref{prop: norm equations}, with $(x,y)=1$. By \Cref{cor: specific norm equations}, to any such solution we can attach a $(\star)$-solution $(a,b,c)$ of \Cref{eq: Cns} or \Cref{eq: C sharp} with $c=-z^2$. By \Cref{thm: solutions twisted Fermat} we then have $c=-1$ and therefore $z=\pm 1$. As $(x, y, z)$ is a solution to one of the equations in \Cref{prop: norm equations} if and only if $(-x, -y, -z)$ is, and since $j(E)$ depends only on $x/y$, we can assume $z=1$. We are then left to solve a small number of Thue equations (those of \Cref{prop: norm equations} with $z=1$), which we can easily do in MAGMA. For every solution we then obtain a candidate for the $j$-invariant of $E$. Finally, for each candidate $j$-invariant $j_0$ we construct an elliptic curve $E_{j_0}$ over $\Q$ with $j(E_{j_0})=j_0$ and check whether $E$ has CM. If it does not, using Zywina's algorithm \cite{zywina22} we compute the adelic image of the Galois representation associated with $E_{j_0}$, and find in each case that $|\operatorname{Im} \rho_{E, 49}| = |\operatorname{Im} \rho_{E, 7}| \cdot 7^4$. This is incompatible with the fact that $\operatorname{Im} \rho_{E, 49}$ is contained in $G$ by assumption (note for example that $7^4 \nmid \#G$).
This shows that the non-CM cases cannot arise, and leaves us with finitely many CM $j$-invariants.

More precisely, for $G=G_{ns}^\#(49)$ the only $j$-invariant left after this computation is $j(E)=0$, as desired. For $G=G_{sp}^{\#}(49)$, we are left with the following possibilities:
\[
j(E) \in \{0, \; 2^4 3^3 5^3, \; 2^{15} 3^3, \; 2^{15} \cdot 3 \cdot 5^3 \}.
\]
These $j$-invariants correspond to elliptic curves with complex multiplication respectively by the rings
\[
\mathbb{Z}[\zeta_3], \;\; \mathbb{Z}[\sqrt{-3}], \;\; \mathbb{Z}\left[ \frac{1+\sqrt{-19}}{2} \right], \;\; \mathbb{Z}[3\zeta_3].
\]
Note that $7$ is split in each of these rings. Suppose by contradiction $j(E) \neq 0$. Since $E$ does not have extra automorphisms in this case, using any of \cite[Theorem 1.5]{MR3766118}, \cite[Corollary 1.5]{MR4077686} or \cite{MR4467123} it is easy to show that $\operatorname{Im} \rho_{E, 7^2}$ is the full normaliser of a split Cartan modulo $49$, which is not contained in $G$, contradiction.
\end{proof}

\begin{remark}
Going through the proof of \Cref{prop: norm equations}, we see that $z^{49}$ is the denominator of the $j$-invariant of the elliptic curves corresponding to the solutions of the equation $F(x,y) = k \cdot z^7$. In particular, once we know that $z = \pm 1$ (which follows from $c=-1$), we obtain that the elliptic curve $E$ has integral $j$-invariant. For the non-split case, the integral points of the curve $X_{ns}^+(7)$ were determined by Kenku in \cite{kenku85}.
\end{remark}

We also deduce \Cref{thm: conjectural 7-adic images}.

\begin{proof}[Proof of Theorem \ref{thm: conjectural 7-adic images}]
    Cases (i) and (ii) are identical to the corresponding cases in \Cref{thm: rszb}.
    Consider cases (iii) and (iv) in \Cref{thm: rszb}. The subcases $p^n=3^3$ and $p^n=7^2$ of (iii) are impossible by \cite{balakrishnan25} and \Cref{thm: Cns+(49)}, respectively. Assuming \Cref{conj: XE3}, case (iv) does not arise by \Cref{cor: 7-adic classification}.
\end{proof}

The rest of the paper is dedicated to proving \Cref{thm: solutions twisted Fermat}.

\section{From Fermat equations to elliptic curves via modularity}\label{sect: Fermat to ell curves}

We will associate with every solution of Equations \eqref{eq: Cns} and \eqref{eq: C sharp} certain elliptic curves over $\mathbb{Q}$. As an intermediate step, we begin by introducing the following curves:
\begin{definition}\label{def: Eabc tilde Fabc tilde}
    Let $(a,b,c) \in \mathbb{Z}^3$ be a $(\star)$-solution of the equation $a^2+28b^3=27c^7$. We let $\tilde{E}_{(a,b,c)}/\mathbb{Q}$ be the elliptic curve
\begin{equation}\label{eq: Eabc tilde}
\tilde{E}_{(a,b,c)} : y^2 = x^3+3 \cdot 7 \cdot b \cdot x-7a.    
\end{equation}
Similarly, let $(a,b,c) \in \mathbb{Z}^3$ be a $(\star)$-solution of the equation $a^2+196b^3=27c^7$. We let $\tilde{F}_{(a,b,c)}/\mathbb{Q}$ be the elliptic curve
\begin{equation}\label{eq: Fabc tilde}
    \tilde{F}_{(a,b,c)} : y^2 = x^3+3 \cdot 7^2 \cdot b \cdot x-7^2a.
\end{equation}
\begin{remark}
    As we will see in the rest of this section, the $j$-invariants of these curves are proportional to $b^3/c^7$. Moreover, it is immediate to recover a $(\star)$-solution $(a,b,c)$ of one of the equations \eqref{eq: Cns} and \eqref{eq: C sharp} from the ratio $b^3/c^7$.
In particular, knowing the $j$-invariants of all the curves $\tilde{E}_{(a,b,c)}, \tilde{F}_{(a,b,c)}$ that arise from $(\star)$-solutions of Equations \eqref{eq: Cns} and \eqref{eq: C sharp} would be enough to prove \Cref{thm: solutions twisted Fermat}.
\end{remark}
\end{definition}

\begin{lemma}\label{lemma: discriminants Eabc Fabc tilde}
Let $(a,b,c)$ be a $(\star)$-solution to \Cref{eq: Cns} (resp.~\Cref{eq: C sharp}).
    The discriminant of the model in \Cref{eq: Eabc tilde} (resp.~in \Cref{eq: Fabc tilde}) is $-2^4 \cdot 3^6 \cdot 7^2 \cdot c^7$ (resp.~$-2^4 \cdot 3^6 \cdot 7^4 \cdot c^7$).
\end{lemma}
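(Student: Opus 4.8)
The claim is a direct computation with the Weierstrass discriminant, so the plan is simply to apply the standard formula and substitute the defining relation of the Fermat equation. Recall that for a curve in the short Weierstrass form $y^2 = x^3 + Ax + B$ one has $\Delta = -16(4A^3 + 27B^2)$. For the model in \Cref{eq: Eabc tilde} we have $A = 3 \cdot 7 \cdot b$ and $B = -7a$, so
\[
4A^3 + 27B^2 = 4 \cdot 3^3 \cdot 7^3 \cdot b^3 + 27 \cdot 7^2 \cdot a^2 = 27 \cdot 7^2 \left( 4 \cdot 7 \cdot b^3 + a^2 \right) = 27 \cdot 7^2 \cdot (a^2 + 28 b^3).
\]
Now I would invoke the hypothesis that $(a,b,c)$ is a $(\star)$-solution of \Cref{eq: Cns}, which gives $a^2 + 28 b^3 = 27 c^7$; hence $4A^3 + 27B^2 = 27^2 \cdot 7^2 \cdot c^7 = 3^6 \cdot 7^2 \cdot c^7$, and therefore
\[
\Delta(\tilde{E}_{(a,b,c)}) = -16 \cdot 3^6 \cdot 7^2 \cdot c^7 = -2^4 \cdot 3^6 \cdot 7^2 \cdot c^7,
\]
as stated.

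The computation for $\tilde{F}_{(a,b,c)}$ is entirely parallel: here $A = 3 \cdot 7^2 \cdot b$ and $B = -7^2 a$, so
\[
4A^3 + 27B^2 = 4 \cdot 3^3 \cdot 7^6 \cdot b^3 + 27 \cdot 7^4 \cdot a^2 = 27 \cdot 7^4 (a^2 + 196 b^3),
\]
and using that $(a,b,c)$ is a $(\star)$-solution of \Cref{eq: C sharp}, i.e.\ $a^2 + 196 b^3 = 27 c^7$, we obtain $4A^3 + 27B^2 = 3^6 \cdot 7^4 \cdot c^7$ and hence $\Delta(\tilde{F}_{(a,b,c)}) = -2^4 \cdot 3^6 \cdot 7^4 \cdot c^7$.

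There is essentially no obstacle here; the only thing to be slightly careful about is bookkeeping of the powers of $2$, $3$, and $7$, and making sure the convention for $\Delta$ (with the factor $-16$) matches the one used elsewhere in the paper. I would also remark in passing that this discriminant is not necessarily minimal — it is the discriminant of the \emph{given model}, which is all that is claimed — and that the factorisation just obtained immediately shows the $j$-invariant is $\frac{-1728 \cdot (4A)^3}{\Delta}$ up to the explicit constants, so that it is indeed proportional to $b^3/c^7$, matching the remark following \Cref{def: Eabc tilde Fabc tilde}.
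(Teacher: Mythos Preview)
Your proof is correct and is essentially identical to the paper's own argument: both apply the standard formula $\Delta = -16(4A^3+27B^2)$ to the given short Weierstrass model, factor out the powers of $3$ and $7$, and substitute the defining Fermat relation. The paper is slightly terser (it only writes out the $\tilde{E}_{(a,b,c)}$ case and declares the $\tilde{F}_{(a,b,c)}$ case ``very similar''), but there is no difference in method.
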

\begin{proof}
For \eqref{eq: Eabc tilde} we have $\Delta_{\tilde{E}_{(a,b,c)}} = -16(4 (21b)^3 + 27 (-7a)^2) = -2^4 \cdot 3^3 \cdot 7^2 (a^2+28b^3)$. By assumption, $a^2+28b^3=27c^7$, which gives the formula in the statement. The case of the elliptic curve given in \Cref{eq: Fabc tilde} is very similar.
\end{proof}

\begin{lemma}\label{lemma: twists of Eabc Fabc tilde}
    Let $(a,b,c)$ be a $(\star)$-solution to \eqref{eq: Cns} (resp.~\eqref{eq: C sharp}). The quadratic twist by $-3$ of $\tilde{E}_{(a,b,c)}$ (resp.~of $\tilde{F}_{(a,b,c)}$) has good reduction at $3$.
    More precisely, there exists an integral model of this twist with discriminant $-2^4 \cdot 7^2 \cdot c^7$ (resp.~$-2^4 \cdot 7^4 \cdot c^7$).
\end{lemma}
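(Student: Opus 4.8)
The plan is to perform the standard quadratic twist computation and track the discriminant carefully. Recall that if $E/\Q$ is given by a short Weierstrass model $y^2 = x^3 + Ax + B$, then its quadratic twist by a squarefree integer $d$ has model $y^2 = x^3 + d^2 A x + d^3 B$, with discriminant $d^6$ times the original discriminant. So for $\tilde{E}_{(a,b,c)} : y^2 = x^3 + 21bx - 7a$, the twist by $d=-3$ is $y^2 = x^3 + 9\cdot 21 b\, x - (-27)\cdot 7a = x^3 + 3^3\cdot 7 b\, x + 3^3\cdot 7 a$, and the discriminant of this model is $(-3)^6 \cdot (-2^4\cdot 3^6\cdot 7^2\cdot c^7)$, which is visibly not minimal at $3$.

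First I would exhibit an explicit integral change of coordinates removing the power of $3$. Since every coefficient of $y^2 = x^3 + 3^3\cdot 7 b x + 3^3\cdot 7 a$ is divisible by a suitable power of $3$ — namely $A' = 3^3\cdot 7b$ and $B' = 3^3\cdot 7a$ — one checks that the substitution $x \mapsto 3x$, $y\mapsto 3\sqrt{3}\,y$... is not rational, so instead I would use $x\mapsto 9x$, $y\mapsto 27y$, which sends $y^2 = x^3 + A'x + B'$ to $729 y^2 = 729 x^3 + 9A' x + B'$, i.e.\ $y^2 = x^3 + \tfrac{A'}{81}x + \tfrac{B'}{729}$; this requires $81\mid A'$ and $729\mid B'$, which fails. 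The correct move is the substitution $(x,y)\mapsto (3x, 3y)$ composed with dividing through, or more cleanly: observe $A' = 3^3\cdot 7b$, $B' = 3^3\cdot 7a$, and apply $(x,y)\mapsto (3^2 x', 3^3 y')$ only after first noting we can instead take the twist by $-3$ in the form $y^2 = x^3 - 3\cdot 7 b\cdot 3\, x + \dots$; concretely one verifies that $y^2 = x^3 + 3\cdot 7 b\, x + 7a$ (a model obtained by the substitution $(x,y)\mapsto(3x,3\sqrt{-3}\,y)$ done integrally via $u=3$, $u^4 A' \to A/\dots$) has discriminant exactly $3^{-6}$ times that of the twist model, giving $-2^4\cdot 7^2\cdot c^7$ as claimed. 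The key arithmetic input making this rescaling legitimate is that $3 \nmid c$, which holds because $(a,b,c)$ is a $(\star)$-solution (indeed $(c,42ab)=1$); this is what guarantees that after dividing out the factor $3^6$ the resulting discriminant is still an integer and, more importantly, that the model is then \emph{minimal} at $3$, so the reduction at $3$ is genuinely good rather than merely of the form $I_0$ after further reduction.

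The remaining verification is that this integral model really has good reduction at $3$: having exhibited a model with $v_3(\Delta) = 0$ (using $3\nmid c$), good reduction at $3$ is immediate since $3 > 3$ is false, so one must also check $3\nmid$ the discriminant \emph{and} that the model is a genuine Weierstrass model over $\Z_{(3)}$ — but since $3\nmid \Delta$ for the rescaled model, it is automatically minimal and nonsingular mod $3$. For $\tilde{F}_{(a,b,c)}: y^2 = x^3 + 3\cdot 7^2 b\, x - 7^2 a$ the computation is identical, using $\Delta_{\tilde F} = -2^4\cdot 3^6\cdot 7^4\cdot c^7$ from \Cref{lemma: discriminants Eabc Fabc tilde}: the twist by $-3$ again acquires a factor $3^6$ in the discriminant, which is cancelled by the same rescaling, leaving $-2^4\cdot 7^4\cdot c^7$.

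I expect no serious obstacle here — this is a routine twisting-and-minimality computation. The only point demanding care is bookkeeping: making sure the rescaling $(x,y)\mapsto(u^2 x, u^3 y)$ with $u=3$ is applied in the correct direction (it divides $\Delta$ by $u^{12}=3^{12}$, so one must start from the twist model whose $\Delta$ carries $3^6$ and check that after the correct single scaling — which here is by $u = 3$ but applied so as to divide by $3^6$, i.e.\ effectively a "half" scaling that is legitimate precisely because $A', B'$ are divisible by $3^3, 3^3$ respectively) one lands on an integral model; and invoking $(\star)$ at the single crucial moment to conclude $3\nmid c$, hence $v_3(\Delta)=0$ for the final model.
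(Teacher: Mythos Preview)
Your approach has a genuine gap. The rescaling you propose works only in one of the two cases allowed by $(\star)$, and you have not noticed the case split.

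Concretely: the twist model is $y^2 = x^3 + 3^3\cdot 7b\,x + 3^3\cdot 7a$, with discriminant $-2^4\cdot 3^{12}\cdot 7^2\cdot c^7$. To remove $3^{12}$ one applies $(x,y)\mapsto(3^2w,3^3z)$, which yields $z^2 = w^3 + \tfrac{7b}{3}w + \tfrac{7a}{27}$. This is integral precisely when $3\mid b$ and $27\mid a$. The condition $(\star)$ says $v_3(a)=0$ \emph{or} $v_3(a)\ge 3$; your argument covers only the second alternative (in which case reducing $a^2+28b^3=27c^7$ modulo $3$ indeed forces $3\mid b$, and everything goes through as you sketch).

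When $v_3(a)=0$ the naive rescaling fails: reducing the Fermat equation modulo $3$ gives $b\equiv -1\pmod 3$, so neither $3\mid b$ nor $27\mid a$. There is no ``half scaling'' available here; one must pass to a \emph{long} Weierstrass model via a shifted substitution $x\mapsto 3^2 w - a_0$ (with $a_0=\pm 1$ determined by $a\bmod 9$), and then verify integrality of the resulting coefficients using finer congruences extracted from $a^2+28b^3=27c^7$ modulo $9$ and $27$. This is the substance of the paper's proof in this case, and it is not purely formal bookkeeping: the $(\star)$-condition on $v_3(a)$ and the Fermat relation are both genuinely used. The same dichotomy arises for $\tilde{F}_{(a,b,c)}$.
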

\begin{proof}
Denote by $\tilde{E}_{(a,b,c)}^{(-3)}$ the quadratic twist of $\tilde{E}_{(a,b,c)}$ by $-3$.
    A model for this curve is given by
    \[
    y^2 = x^3 + 21 \cdot (-3)^2 \cdot b x -7a \cdot (-3)^3.
    \]
    Suppose first that $3 \mid a$. By condition $(\star)$ we then have $v_3(a) \geq 3$, and taking \eqref{eq: Cns} modulo $3$ shows that $v_3(b) \geq 1$. It follows that $v_3(21 \cdot (-3)^2 \cdot b)\geq 1+2+1 = 4$ and $v_3(-7a \cdot (-3)^3) \geq 6$. Replacing $x \to 3^2w$ and $y \to 3^3z$ and dividing by $3^6$ we then find the integral model
    \[
    z^2 = w^3 + \frac{7b}{3} x + \frac{7a}{3^3}.
    \]
    Using the fact that $a^2+28b^3=27c^7$ by assumption, the discriminant of this model is
    \[
    -16 \left( 4\left(\frac{7b}{3} \right)^3 + 27 \left( \frac{7a}{3^3} \right)^2 \right) = -\frac{16}{27} \cdot 7^2 \cdot \left( 4 \cdot 7b^3 +  a^2 \right) = - \frac{16}{27} \cdot 7^2 \cdot 27c^7=-2^47^2c^7,
    \]
    and $(c,3)=1$ by $(\star)$. Thus, this is an integral model of $\tilde{E}_{(a,b,c)}^{(-3)}$ with good reduction at $3$.

    Suppose now that $v_3(a)=0$. The equation $a^2+28b^3=27c^7$ then also gives $3 \nmid b$. Elementary arguments using congruences modulo 3, 9 and 27 yield the following: we can write $a=9a_2+a_0$ with $a_0 \in \{-1, 1\}$, and $b=9b_2+3b_1-1$ with $3 \mid b_1a_0-a_2$.
    Using $a_0^2=1$, the change of variables $x \to 3^2w -a_0, y \to 3^3z$ (followed by division by $3^6$) gives
    \begin{equation}\label{eq: model of good reduction at 3}
        z^2 = w^3 - a_0w^2 + \frac{1+7b}{3}w + \frac{-a_0-21a_0b+7a}{27}.    
    \end{equation}
    Replacing $a=9a_2+a_0$, $b=9b_2+3b_1-1$ and using the congruence $3 \mid b_1a_0-a_2$, one sees immediately that this equation has integral coefficients.
    Its discriminant can be computed directly, or more easily as follows: twisting by $(-3)$ increases the discriminant by a factor $(-3)^6$ and the change of variables divides it by a factor $3^{12}$. It follows that the discriminant of the integral model \eqref{eq: model of good reduction at 3} is $3^{-6}$ times the discriminant of the model in \Cref{eq: Eabc tilde}, which is $-2^43^37^2(a^2+28b^3)=-2^43^67^2 c^7$. The discriminant of \eqref{eq: model of good reduction at 3} is therefore $-2^47^2c^7$, which is prime to 3. This shows that $\tilde{E}_{(a,b,c)}^{(-3)}$ has good reduction at $3$, as claimed.

    The case of the quadratic twist $\tilde{F}^{(-3)}_{(a,b,c)}$ of $\tilde{F}_{(a,b,c)}$ is very similar. If $v_3(a) \geq 3$, the same change of variables as in the previous case produces a model with good reduction at $3$. If $v_3(a)=0$, as above we write $a=9a_2+3a_1+a_0, b=9b_2+3b_1+b_0$ with $a_0, b_0 \in \{-1, 1\}$ and $a_1, b_1 \in \{-1, 0, 1\}$. We then consider \Cref{eq: C sharp} modulo 3, $3^2$, $3^3$ to find
    $b_0=-1$, $a_1=a_0$ and $b_1+1 \equiv a_0a_2 \pmod{3}$. The change of variables
    \[
    x \to 3^2 w - 3a_0, \quad y \to 3^3 z
    \]
    gives the model
    \[
    z^2 = w^3 -a_0 w^2 + \frac{1+49b}{3}w + \frac{-a_0 - 3\cdot 7^2\cdot a_0 \cdot b  + 49a}{27}.
    \]
    Using the congruences given above, one checks that this model is integral. Since its discriminant is prime to $3$, this shows as desired that $\tilde{F}_{(a,b,c)}^{(-3)}$ has good reduction at $3$.
\end{proof}

Motivated by the previous lemma, we replace $\tilde{E}_{(a,b,c)}$ and $\tilde{F}_{(a,b,c)}$ with their twists by $-3$ and introduce the following notation:
\begin{definition}\label{def: Eabc Fabc}
    Let $(a,b,c) \in \mathbb{Z}^3$ be a $(\star)$-solution of the equation $a^2+28b^3=27c^7$. We let $E_{(a,b,c)}/\mathbb{Q}$ be the elliptic curve
\begin{equation}\label{eq: Eabc}
E_{(a,b,c)} : y^2 = x^3+ 3\cdot 7 \cdot (-3)^2 \cdot b \cdot x -7 \cdot (-3)^3 a.
\end{equation}
Similarly, let $(a,b,c) \in \mathbb{Z}^3$ be a $(\star)$-solution of the equation $a^2+196b^3=27c^7$. We let $F_{(a,b,c)}/\mathbb{Q}$ be the elliptic curve
\begin{equation}\label{eq: Fabc}
    F_{(a,b,c)} : y^2 = x^3+3 \cdot 7^2 \cdot (-3)^2 \cdot b \cdot x-7^2 \cdot (-3)^3 a.
\end{equation}
\end{definition}
Note that these models are not minimal at 3: the corresponding minimal models are described in the proof of \Cref{lemma: twists of Eabc Fabc tilde}.

\begin{lemma}\label{lemma: basic properties Eabc Fabc}\phantom{~}
\begin{enumerate}
    \item Let $(a,b,c)$ be a $(\star)$-solution of the equation $a^2+28b^3=27c^7$. 
    \begin{enumerate}
        \item The elliptic curve $E_{(a,b,c)}$ has $j$-invariant $j(E_{(a,b,c)}) = 2^8 \cdot 7 \cdot \frac{b^3}{c^7}$.
        \item The elliptic curve $E_{(a,b,c)}$ has good reduction at all primes not dividing $14c$, multiplicative reduction at all primes $p \neq 2,7$ dividing $c$, and additive potentially good reduction at $2$ and $7$. Its conductor is $2^f 7^2 \prod_{\substack{p \not \in {2,7} \\ p \mid c}} p$ for some integer $f \in \{2,3,4\}$, and its minimal discriminant is $\Delta_{min} = - 2^4 \cdot 7^2 \cdot c^7$.
    \end{enumerate}
    
    \item Let $(a,b,c)$ be a $(\star)$-solution of the equation $a^2+196b^3=27c^7$.
    \begin{enumerate}
        \item The elliptic curve $F_{(a,b,c)}$ has $j$-invariant $j(F_{(a,b,c)}) = 2^8 \cdot 7^2 \cdot \frac{b^3}{c^7}$.
        \item The elliptic curve $F_{(a,b,c)}$ has good reduction at all primes not dividing $14c$, multiplicative reduction at all primes $p \neq 2, 7$ dividing $c$, and additive reduction at $2$ and $7$. Its conductor is $2^f 7^2 \prod_{\substack{p \not \in {2,7} \\ p \mid c}} p$ for some integer $f \in \{2,3,4\}$, and its minimal discriminant is $\Delta_{min} = -2^4 \cdot 7^4 \cdot c^7$.
    \end{enumerate}
\end{enumerate}
\end{lemma}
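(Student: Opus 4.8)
The two parts are proved by the same recipe, so I would only write out part (1) in detail and note at the end that part (2) is identical with $7$ replaced by $7^2$. For part (1)(a), the $j$-invariant is a direct substitution: for a Weierstrass equation $y^2 = x^3 + Ax + B$ one has $j = -1728 \cdot (4A)^3 / \Delta$ with $\Delta = -16(4A^3 + 27 B^2)$; plugging in $A = 3 \cdot 7 \cdot (-3)^2 b$ and $B = -7 \cdot (-3)^3 a$ and using $a^2 + 28 b^3 = 27 c^7$ to simplify $4A^3 + 27B^2$ (this is, up to the overall constant $(-3)^6$, exactly the computation already carried out in \Cref{lemma: discriminants Eabc Fabc tilde}) gives $j = 2^8 \cdot 7 \cdot b^3/c^7$. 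I would simply record the arithmetic. One should also check this fraction is in lowest terms away from $2,7$: since $(a,b,c)$ satisfies $(\star)$ we have $(c,ab)=1$, and $a^2 = 27c^7 - 28b^3$ shows any common prime of $b$ and $c$ divides $a$, so $\gcd(b,c)=1$; hence for $p \notin\{2,7\}$, $v_p(j) = 3v_p(b) - 7v_p(c)$ with at most one term nonzero.

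For part (1)(b), the reduction type is read off from the minimal discriminant and the potential good reduction criterion. By \Cref{lemma: discriminants Eabc Fabc tilde} the model \eqref{eq: Eabc tilde} has discriminant $-2^4 3^6 7^2 c^7$, and twisting by $-3$ multiplies this by $(-3)^6$; \Cref{lemma: twists of Eabc Fabc tilde} then exhibits an integral model of $E_{(a,b,c)} = \tilde E_{(a,b,c)}^{(-3)}$ with discriminant exactly $-2^4 7^2 c^7$, prime to $3$, so $E_{(a,b,c)}$ has good reduction at $3$ and $\Delta_{\min} \mid -2^4 7^2 c^7$. At a prime $p \notin \{2,7\}$ dividing $c$: since $v_p(j) = -7 v_p(c) < 0$ the curve has potentially multiplicative reduction, and because its quadratic twist by $-3$ has a model with $v_p(\Delta) = 7 v_p(c) \not\equiv 0 \pmod{12}$ it cannot have potentially good reduction there, so (the curve being a twist by the unit $-3$ at $p$) the reduction is multiplicative; in particular $\Delta_{\min}$ has $v_p = 7v_p(c)$, so $\Delta_{\min} = -2^f 7^g c^7$ for the appropriate $f,g$. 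At $p = 7$: $v_7(j) = 3 v_7(b) - 7 v_7(c) \geq 0$ because $v_7(c) = 0$ (from $(\star)$, as $7 \mid 42$), so the reduction is potentially good; it is additive because $v_7(\Delta_{\min})$ for our model is $2$, which is not a multiple of $12$, so the reduction cannot be made good over $\Q_7$. To pin down the conductor exponent at $7$ as exactly $2$ one argues $p = 7 > 3$, so the conductor exponent equals the (tame) valuation of $\mathfrak{f}$, which for an additive curve with potentially good reduction over a field of residue characteristic $> 3$ is $2$. At $p = 2$: $v_2(j) = 8 + 3v_2(b) \geq 8 > 0$ (using that $2 \nmid c$, so $c$ is odd and $a$ is odd, hence $28 b^3 = 27 c^7 - a^2$ is even, forcing $b$ even — this gives $v_2(j) \geq 11$ in fact), so again potentially good reduction; additivity and the bound $f \in \{2,3,4\}$ for the conductor exponent at $2$ follow from the standard bound $f_2 \leq 2 + 3 v_2(3) + \dots$ — more precisely, the conductor exponent at $2$ of an elliptic curve over $\Q_2$ with potentially good, additive reduction lies in $\{2,\dots,8\}$ in general, but here, because the model has $v_2(\Delta_{\min}) = 4$ and $v_2(c_4) = v_2(48 \cdot 7 \cdot (-3)^2 b)$ can be computed, one narrows it to $\{2,3,4\}$ (alternatively: the curve acquires good reduction over a tamely ramified extension since its $j$-invariant has $v_2 \geq 8 \equiv 2 \pmod 3$ is not enough, so one checks directly that $E_{(a,b,c)}$ becomes good over an explicit extension of degree dividing $6$, bounding $f_2 \leq 2 + v_2(6) + \dots$). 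Assembling the local contributions gives $N = 2^f 7^2 \prod_{p \mid c,\, p \neq 2,7} p$ with $f \in \{2,3,4\}$ and $\Delta_{\min} = -2^4 7^2 c^7$.

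The main obstacle here is the analysis at $p = 2$: unlike $7$, the prime $2$ is small relative to $3$, so the conductor exponent is not simply the valuation of the discriminant-type quantity, and one genuinely has to control the wild part. I expect the cleanest route is to run Tate's algorithm on the minimal model at $2$ extracted from \Cref{lemma: twists of Eabc Fabc tilde}, or equivalently to observe that, since $v_2(j) \geq 8$ and $v_2(j) \equiv 2 \pmod{3}$ (as $v_2(b)$ can be anything), the curve acquires good reduction over an extension whose ramification is controlled, and then cite the standard table (e.g. the Papadopoulos or Kraus tables) for conductor exponents at $2$. The value of $f$ will actually depend on $(a,b,c)$, which is why the statement only claims $f \in \{2,3,4\}$; I would not try to determine it precisely, since only the prime-to-$2$ part of the conductor and the exact minimal discriminant are used in the level-lowering argument of \Cref{sect: Fermat to ell curves}. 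Part (2) is handled verbatim, using the discriminant $-2^4 3^6 7^4 c^7$ from \Cref{lemma: discriminants Eabc Fabc tilde} and the model with discriminant $-2^4 7^4 c^7$ from \Cref{lemma: twists of Eabc Fabc tilde}, and noting that $v_7(j(F_{(a,b,c)})) = 3v_7(b) - 7v_7(c) + 0 \geq 0$ again with $v_7(c) = 0$, so the reduction at $7$ is still additive potentially good with conductor exponent $2$.
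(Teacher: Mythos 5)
Your overall strategy --- compute the $j$-invariant directly from the Weierstrass data, use the globally minimal model from \Cref{lemma: twists of Eabc Fabc tilde}, and analyse the primes $2$, $7$, and $p\mid c$ separately --- matches the paper's. However, two of the local analyses have gaps, and one claim is simply false.

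At a prime $p\mid c$ with $p\notin\{2,7\}$, the argument you offer for \emph{multiplicative} (as opposed to additive) reduction is not an argument. Observing $v_p(j)<0$ only gives \emph{potentially} multiplicative reduction, and the sentence ``its quadratic twist by $-3$ has a model with $v_p(\Delta)=7v_p(c)\not\equiv 0\pmod{12}$ so it cannot have potentially good reduction'' adds nothing (the mod-$12$ criterion applies to minimal discriminants of potentially good curves and is, in any case, redundant with $v_p(j)<0$). The paper's route is both simpler and correct: compute $c_4=-48\cdot 21\cdot(-3)^2\cdot b$; condition $(\star)$ gives $p\nmid 42ab$, so $v_p(c_4)=0$ while $v_p(\Delta)>0$, and \cite[Proposition VII.5.1]{MR2514094} (which you should cite here) yields multiplicative reduction and simultaneously shows the model is $p$-minimal.

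The parenthetical claim ``$28b^3=27c^7-a^2$ is even, forcing $b$ even --- this gives $v_2(j)\geq 11$'' is wrong: $28b^3$ is even no matter what $b$ is. In fact, since $a,c$ are odd, a short computation modulo $8$ shows $v_2(27c^7-a^2)\in\{1,2\}$, whence $v_2(b)=0$ (so $b$ is odd) and $v_2(j)=8$ exactly. This does not affect the conclusion that reduction at $2$ is potentially good, but it is an error.

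Finally, for the bound $f\in\{2,3,4\}$ on the conductor exponent at $2$ you list several possible strategies without committing to one, and the one ingredient that actually closes the argument in the paper is missing: the Ogg--Saito formula $v_2(\Delta_{\min})=f+m-1$ with $m\geq 1$, which together with $v_2(\Delta_{\min})=4$ gives $f\leq 4$, while additivity gives $f\geq 2$. You should state this explicitly rather than gesture at Tate's algorithm or Kraus/Papadopoulos tables.
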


\begin{proof}
    Using \Cref{lemma: discriminants Eabc Fabc tilde} we compute $j(\tilde{E}_{(a,b,c)}) = -1728 \cdot \frac{(2^2 \cdot 3 \cdot 7 \cdot b)^3}{\Delta} = \frac{2^{12} \cdot 3^6 \cdot 7^3 \cdot b^3}{2^4 \cdot 3^6 \cdot 7^2 \cdot c^7} = 2^8 \cdot 7 \cdot \frac{b^3}{c^7}$, and similarly $j(\tilde{F}_{(a,b,c)}) = 2^8 \cdot 7^2 \cdot \frac{b^3}{c^7}$. Since the $j$-invariant does not change under quadratic twists, this proves the first part of each statement.
    
    The expressions for the discriminants of $E_{(a,b,c)}$ and $F_{(a,b,c)}$ given in \Cref{lemma: discriminants Eabc Fabc tilde} show that $E_{(a,b,c)}, F_{(a,b,c)}$ have good reduction at every prime not dividing $14c$. Moreover, for $p \in \{2,7\}$ we see that $v_p(j(E_{(a,b,c)}) \ge 0$, because condition $(\star)$ gives $p \nmid c$, and so $E_{(a,b,c)}$ has potentially good reduction at $p$. If instead $p \mid c$, by $(\star)$ we have $p \nmid 42ab$. The usual quantity $c_4$ \cite[\S 3.1]{MR2514094} associated with $E_{(a,b,c)}$ is equal to $-48 \cdot 21 \cdot (-3)^2 \cdot b$ and so $p \nmid c_4$. This implies that $E_{(a,b,c)}$ has multiplicative reduction at $p$ by \cite[Proposition VII.5.1]{MR2514094}. The same argument shows that if $p \mid c$ then $F_{(a,b,c)}$ has multiplicative reduction at $p$. \\
    We now observe that the discriminants of $E_{(a,b,c)}$ and $F_{(a,b,c)}$ given in \Cref{lemma: twists of Eabc Fabc tilde} are globally minimal: indeed, the exponents of $2$ and $7$ are smaller than $12$, and for every prime $p \mid c$ the curves $E_{(a,b,c)}$ and $F_{(a,b,c)}$ have multiplicative reduction at $p$, because $v_p(j) = - v_p(\Delta)$, and so $p \nmid c_4$. Let $N$ be the conductor of $E_{(a,b,c)}$. Since $E_{(a,b,c)}$ has additive reduction at $2$ and $7$, we have $7^2 \mid\mid N$ and $2^f \mid\mid N$ for some $f \ge 2$, while for $p \mid c$ we have $p \mid\mid N$, because $E_{(a,b,c)}$ has multiplicative reduction at $p$. By the Ogg--Saito formula (see for example \cite[Formula IV.11.1]{silverman-advanced-topics}) we have $v_2(\Delta_{min}) = f + m-1$, where $m \ge 1$ is the number of components on the special fibre of (the minimal proper regular model of) $E_{\Q_2}$. Since $v_2(\Delta_{\min})=4$, we obtain $f \le 4$ as desired. The same argument applies to the conductor of $F_{(a,b,c)}$.
\end{proof}

\begin{lemma}\label{lemma: E7 irreducible}
    Let $(a,b,c)$ be a $(\star)$-solution of the equation $a^2+28b^3=27c^7$ (resp.~$a^2+196b^3=27c^7$). The natural Galois representation on $E_{(a,b,c)}[7]$ (resp.~$F_{(a,b,c)}[7]$) is irreducible.
\end{lemma}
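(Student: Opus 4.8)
The plan is to prove irreducibility by contradiction, exploiting the explicit $j$-invariants computed in \Cref{lemma: basic properties Eabc Fabc}. Recall that the mod-$7$ representation attached to an elliptic curve over $\Q$ is reducible precisely when the curve admits a $\Q$-rational $7$-isogeny: a $G_\Q$-stable line in the $7$-torsion is the same thing as a $\Q$-rational cyclic subgroup of order $7$, and quotienting by it produces a rational $7$-isogeny (the converse being obvious). So it suffices to show that $E_{(a,b,c)}$ (resp.\ $F_{(a,b,c)}$) admits no rational $7$-isogeny.

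Suppose, for contradiction, that $E_{(a,b,c)}$ admits a rational $7$-isogeny. Then \Cref{lemma: 7 isogenies} applies and gives $v_2(j(E_{(a,b,c)})) \le 0$. On the other hand, \Cref{lemma: basic properties Eabc Fabc} computes $j(E_{(a,b,c)}) = 2^8 \cdot 7 \cdot \frac{b^3}{c^7}$, and condition $(\star)$ guarantees that $c$ is coprime to $42ab$; in particular $v_2(c)=0$, so $v_2(j(E_{(a,b,c)})) = 8 + 3v_2(b) \ge 8 > 0$, the desired contradiction. The argument for $F_{(a,b,c)}$ is word-for-word the same, now using $j(F_{(a,b,c)}) = 2^8 \cdot 7^2 \cdot \frac{b^3}{c^7}$ and the fact that the $(\star)$-condition for \eqref{eq: C sharp} again forces $c$ odd, whence $v_2(j(F_{(a,b,c)})) = 8 + 3v_2(b) > 0$.

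There is essentially no obstacle here: the only inputs are the explicit $j$-invariants (already recorded) and Zywina's parametrisation of $j$-invariants of curves with a rational $7$-isogeny, packaged into \Cref{lemma: 7 isogenies}, and the $2$-adic estimate is immediate. The one point worth noting is that \Cref{lemma: 7 isogenies} is applied to $E_{(a,b,c)}$ and $F_{(a,b,c)}$ without assuming they are non-CM; this is legitimate because \Cref{thm: j-invariants Zywina}, on which \Cref{lemma: 7 isogenies} rests, is valid for CM $j$-invariants as well, as noted immediately after its statement.
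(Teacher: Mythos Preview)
Your proof is correct and follows essentially the same route as the paper: both arguments compute $v_2(j)$ from \Cref{lemma: basic properties Eabc Fabc}, use $(\star)$ to see that $c$ is odd so that $v_2(j)>0$, and then invoke \Cref{lemma: 7 isogenies} to rule out a rational $7$-isogeny. Your write-up is simply a more detailed version of the paper's two-line proof, with the reducibility/isogeny equivalence and the CM caveat made explicit.
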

\begin{proof}
Using the formulas for the $j$-invariant given in \Cref{lemma: basic properties Eabc Fabc} and that $2 \nmid c$, we see that the $j$-invariants of $E_{(a,b,c)}$ and $F_{(a,b,c)}$ have strictly positive $2$-adic valuation. The claim follows from \Cref{lemma: 7 isogenies}.
\end{proof}

\begin{lemma}\label{lemma: E7 inertia}\phantom{~}
\begin{enumerate}
    \item Let $(a,b,c)$ be a $(\star)$-solution of \Cref{eq: Cns} and let $E := E_{(a,b,c)}$ be the corresponding elliptic curve, as in \Cref{def: Eabc Fabc}. The semisimplification of the restriction of $\rho_{E,7}$ to an inertia group at $7$ is given by $\chi^{0} \oplus \chi^1$, where $\chi$ is the mod-$7$ cyclotomic character.
    \item Let $(a,b,c)$ be a $(\star)$-solution of \Cref{eq: C sharp} and let $F := F_{(a,b,c)}$ be the corresponding elliptic curve, as in \Cref{def: Eabc Fabc}. The semisimplification of the restriction of $\rho_{F,7}$ to an inertia group at $7$ is given by $\chi^{-1} \oplus \chi^2$, where $\chi$ is the mod-$7$ cyclotomic character.
\end{enumerate}

\end{lemma}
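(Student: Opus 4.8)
The plan is to work over $\mathbb{Q}_7$, where $E := E_{(a,b,c)}$ and $F := F_{(a,b,c)}$ have potentially good reduction, and to combine the Kodaira type at $7$ with the connected--\'etale filtration coming from the (potential) reduction being ordinary. First I would pin down the reduction type: reducing \eqref{eq: Cns} (resp.~\eqref{eq: C sharp}) modulo $7$ and using $7\nmid c$ from $(\star)$ gives $v_7(a)=0$, so the model of \Cref{def: Eabc Fabc} has $v_7(c_4)\ge 1$ (resp.~$\ge 2$) while \Cref{lemma: basic properties Eabc Fabc} gives $v_7(\Delta_{\min})=2$ (resp.~$4$); in particular this model is minimal at $7$. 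Tate's algorithm then shows that $E$ has reduction type $\mathrm{II}$ at $7$ and $F$ has type $\mathrm{IV}$, so $E$ acquires good reduction over a totally tamely ramified extension of $\mathbb{Q}_7$ of degree $6$ and $F$ over one of degree $3$. Moreover $\overline{j(E)}=\overline{j(F)}=0$ since $v_7(j)>0$ (from the formulas of \Cref{lemma: basic properties Eabc Fabc} and $7\nmid c$), so by \Cref{prop: j and Hasse} and $7\equiv 1\pmod 3$ the reduction, once attained, is \emph{ordinary}.

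Next I would pass to $\mathbb{Q}_7^{\mathrm{ur}}$ and set $M:=\mathbb{Q}_7^{\mathrm{ur}}(\varpi)$ with $\varpi^6=7$: this is the unique totally ramified extension of $\mathbb{Q}_7^{\mathrm{ur}}$ of degree $6$, over which both $E$ and $F$ have good reduction, and one checks $M=\mathbb{Q}_7^{\mathrm{ur}}(\mu_7)$, so $\chi$ is trivial on $G_M$ and the Kummer character $\sigma\mapsto\sigma(\varpi)/\varpi$ coincides with $\chi|_{I_7}$ under the reduction isomorphism $\mu_6(\overline{\mathbb{Q}}_7)\cong\mathbb{F}_7^\times$ (this uses the classical identity $\mathbb{Q}_7(\mu_7)=\mathbb{Q}_7(\sqrt[6]{-7})$). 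Let $\mathcal{E}/\mathcal{O}_M$ be the N\'eron model of $E_M$. By ordinarity $\mathcal{E}[7]^0$ is a connected finite flat group scheme of order $7$ over the strictly henselian ring $\mathcal{O}_M$, hence isomorphic to $\mu_{7,\mathcal{O}_M}$; its group of points $V^0\subset E[7]$ is the kernel of reduction, and it is stable under all of $I_7=G_{\mathbb{Q}_7^{\mathrm{ur}}}$ because for $\sigma\in I_7$ the base change $\sigma^*\mathcal{E}$ is again the N\'eron model of $E_M$, hence canonically isomorphic to $\mathcal{E}$, while $V^0$ is intrinsic. Thus $\rho_{E,7}|_{I_7}$ is an extension of a character $\psi_2$ (on $E[7]/V^0$) by a character $\psi_1$ (on $V^0$), so $\rho_{E,7}|_{I_7}^{\mathrm{ss}}=\psi_1\oplus\psi_2$ with $\psi_1\psi_2=\det\rho_{E,7}|_{I_7}=\chi|_{I_7}$; it remains to identify $\psi_1$.

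To compute $\psi_1$ I would write down an explicit good model over $\mathcal{O}_M$. For $E$, the substitution $x=\varpi^2 x'$, $y=\varpi^3 y'$ yields a model $y'^2=x'^3+(\text{unit})$ with good reduction; the descent datum relating it to $E/\mathbb{Q}_7^{\mathrm{ur}}$ sends a generator $\sigma_0$ of $\Gal(M/\mathbb{Q}_7^{\mathrm{ur}})$ with $\sigma_0(\varpi)=\zeta_6\varpi$ to the automorphism $\alpha_E\colon(x',y')\mapsto(\zeta_6^2 x',\zeta_6^3 y')$ of the reduction, which scales the invariant differential $dx'/y'$ by $\zeta_6^{-1}$. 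Transporting $\mathcal{E}[7]^0\cong\mu_{7,\mathcal{O}_M}$, the action of $\sigma_0$ on $V^0$ is the composite of the tautological action of $\sigma_0$ on $\mu_7$ (multiplication by $\chi(\sigma_0)$) with the automorphism of $\mu_7$ induced by $\alpha_E$ (multiplication by $\overline{\zeta_6}^{-1}=\chi(\sigma_0)^{-1}$, since an automorphism acts on the $\mu_p$-part of the $p$-torsion through the scalar by which it scales the invariant differential); hence $\psi_1(\sigma_0)=1$, so $\psi_1=\chi^0$ and $\psi_2=\chi^1$. For $F$, the substitution $x=\varpi^4 x'$, $y=\varpi^6 y'$ gives a good model, $\sigma_0$ is sent to $\alpha_F\colon(x',y')\mapsto(\zeta_6^4 x',y')$, which scales $dx'/y'$ by $\zeta_6^4$, and the same computation yields $\psi_1(\sigma_0)=\chi(\sigma_0)\cdot\overline{\zeta_6}^{4}=\chi(\sigma_0)^{-1}$, so $\psi_1=\chi^{-1}$ and $\psi_2=\chi^2$.

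The reduction-type computation and the verification of the good models are routine. The delicate point is the last step: correctly matching the automorphism twist coming from the descent datum with the cyclotomic character, which hinges on the identification $M=\mathbb{Q}_7^{\mathrm{ur}}(\mu_7)$, on the equality of the Kummer character of $\varpi$ with $\chi|_{I_7}$ (getting the sign right here is what distinguishes the answer for $E$ from that for $F$), and on the fact that an automorphism of an elliptic curve with good ordinary reduction acts on the $\mu_p$-part of its $p$-torsion via the scalar by which it scales the invariant differential. Alternatively, this step can be replaced by a reference to the standard classification of $\rho_{E,p}|_{I_p}$ for an elliptic curve over a $p$-adic field with $p\ge 5$ and potentially good reduction of prescribed Kodaira type.
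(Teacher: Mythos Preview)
Your proof is correct. It reaches the same conclusion as the paper but by a genuinely different, more hands-on route.

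The paper's argument is a two-line application of a black box: it notes (via \Cref{lemma: basic properties Eabc Fabc} and \Cref{prop: j and Hasse}) that the curve has additive, potentially good, potentially ordinary reduction at $7$ with $v_7(\Delta_{\min})\in\{2,4\}$, and then invokes \cite[Proposition~1, p.~9]{kraus97}, which states that in this situation the inertial semisimplification is $\chi^{1-\alpha}\oplus\chi^{\alpha}$ with $\alpha=(p-1)\,v_7(\Delta_{\min})/12$. Plugging in $\alpha=1$ (resp.\ $\alpha=2$) gives the result.

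You instead \emph{re-derive} the relevant special case of Kraus's proposition: you pin down the Kodaira type (II for $E$, IV for $F$) from $v_7(a)=0$ and $v_7(\Delta)$, write down explicit good integral models over the tame extension $M=\mathbb{Q}_7^{\mathrm{ur}}(\varpi)$, compute the descent datum as an automorphism of the $j=0$ reduction, and then read off $\psi_1$ from its action on the invariant differential (hence on the $\mu_7$-part via the height-one formal group). This makes the mechanism completely transparent and avoids an external reference, at the cost of the delicate sign-tracking you flag at the end. Your closing remark that ``this step can be replaced by a reference to the standard classification'' is exactly the shortcut the paper takes.

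One small point: when you say ``\Cref{lemma: basic properties Eabc Fabc} gives $v_7(\Delta_{\min})=2$ (resp.\ $4$); in particular this model is minimal at $7$'', note that \Cref{lemma: basic properties Eabc Fabc} refers to the model of \Cref{lemma: twists of Eabc Fabc tilde}, not the one in \Cref{def: Eabc Fabc}. The conclusion is still immediate, since the model of \Cref{def: Eabc Fabc} is integral at $7$ with $v_7(\Delta)=2$ (resp.\ $4$) $<12$, hence minimal there; but strictly speaking you should compute this $7$-adic valuation directly rather than cite the lemma.
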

\begin{proof}
    By \Cref{lemma: basic properties Eabc Fabc} we know that $E$ has bad, potentially good reduction at 7; by \Cref{prop: j and Hasse}, the reduction is potentially ordinary since $j(E) \equiv 0 \pmod 7$ and $7 \equiv 1 \pmod{3}$.
    Moreover, $v_7(\Delta)=2$, and this valuation is clearly minimal. By \cite[Proposition 1 on page 9]{kraus97}, the semisimplification of the restriction of $E[7]$ to an inertia group at $7$ is $\chi^{1-\alpha} \oplus \chi^{\alpha}$, where $\alpha :=(p-1) \frac{v(\Delta_{\min})}{12} = 1$ in our case. The argument for $F$ is identical, except that now $\alpha = (p-1) \frac{v(\Delta_{\min})}{12}=2$.
\end{proof}

In the next proposition we use the modularity of elliptic curves over $\Q$ to reduce our problem to one about modular forms. Before stating it, we introduce some standard notation. Let $f$ be a weight-2 newform for $\Gamma_0(N)$ and let $\mathcal{O}_f := \mathbb{Z}[a_n(f) : n \in \mathbb{N}]$ be the $\Z$-algebra generated by the Fourier coefficients $a_n(f)$ of $f$. Let $\mathfrak{p}$ be a prime of $\mathcal{O}_f$ above $7$ and with residue field $\F_7$. To this data, one can attach a Galois representation with values in $\operatorname{GL}_2(\F_7)$ that we denote by $\rho_{f, \mathfrak{p}}$. Given an elliptic curve $E/\Q$, we say that the Galois representation $\rho_{E, 7}$ arises from the modular form $f$ and the prime $\mathfrak{p}$ (and write $\rho_{E, 7} \sim \rho_{f, \mathfrak{p}}$) if $a_\ell(f) \equiv a_\ell(E) \pmod{\mathfrak{p}}$ for all but finitely many primes $\ell$, where (for primes $\ell$ where $E$ has good reduction) we set $a_\ell(E) = \ell+1-\#E(\mathbb{F}_\ell)$. Note that the traces $a_\ell(E)$ for a density-one set of primes $\ell$ determine the semisimplification of $\rho_{E,7}$.

\begin{proposition}\label{prop: level lowering}\phantom{~}
\begin{enumerate}
    \item     Let $(a,b,c)$ be a $(\star)$-solution of the equation $a^2+28b^3=27c^7$. There exists $N' \in \{2^2 \cdot 7^2, 2^3 \cdot 7^2, 2^4 \cdot 7^2\}$, a newform $f$ of weight 2 for $\Gamma_0(N')$, and a prime $\lambda$ of the ring $\mathcal{O}_f=\mathbb{Z}[a_n(f) : n \in \mathbb{N}]$ with residue field $\mathbb{F}_7$ such that $\rho_{E_{(a,b,c)}, 7} \sim \rho_{f, \lambda}$.
    \item Let $(a,b,c)$ be a $(\star)$-solution of the equation $a^2+196b^3=27c^7$. There exists $N' \in \{2^2 \cdot 7^2, 2^3 \cdot 7^2, 2^4 \cdot 7^2\}$, a newform $f$ of weight 2 for $\Gamma_0(N')$, and a prime $\lambda$ of the ring $\mathcal{O}_f=\mathbb{Z}[a_n(f) : n \in \mathbb{N}]$ with residue field $\mathbb{F}_7$ such that $\rho_{F_{(a,b,c)}, 7} \sim \rho_{f, \lambda}$.
\end{enumerate}
\end{proposition}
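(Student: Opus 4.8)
The plan is to invoke Ribet's level-lowering theorem (and its extensions by Diamond, Kramer, and others), using the explicit description of the bad reduction of $E_{(a,b,c)}$ and $F_{(a,b,c)}$ obtained in \Cref{lemma: basic properties Eabc Fabc}. First I would recall that by the modularity theorem, the Galois representation $\rho_{E_{(a,b,c)},7}$ arises from a weight-2 newform of level equal to the conductor $N = 2^f \cdot 7^2 \cdot \prod_{p \mid c,\ p \neq 2,7} p$, with $f \in \{2,3,4\}$ (note $f \geq 2$ because the reduction at $2$ is additive). By \Cref{lemma: E7 irreducible} the representation $\rho_{E_{(a,b,c)},7}$ is irreducible, so Ribet's theorem applies and allows us to strip from the level every prime at which the representation is \emph{finite} (in the sense of Serre), i.e.\ every prime $p \nmid 14$ of multiplicative reduction such that $7 \mid v_p(\Delta_{\min})$.

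The key arithmetic input is precisely \Cref{lemma: basic properties Eabc Fabc}: at each prime $p \mid c$ with $p \neq 2,7$, the curve $E_{(a,b,c)}$ has multiplicative reduction and $v_p(\Delta_{\min}) = v_p(-2^4 7^2 c^7) = 7 v_p(c) \equiv 0 \pmod 7$. Therefore $\rho_{E_{(a,b,c)},7}$ is unramified at $p$ after twisting appropriately — equivalently, the mod-$7$ representation restricted to inertia at $p$ is trivial or arises from a finite flat group scheme — so by level-lowering each such prime $p$ may be removed from the level. The primes $2$ and $7$ cannot be removed, since the reduction there is additive (for $p=7$ this is also visible from \Cref{lemma: E7 inertia}, where the inertia action is $\chi^0 \oplus \chi^1$, genuinely ramified). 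After lowering, the level divides $2^f \cdot 7^2$ with $f \in \{2,3,4\}$, giving $N' \in \{2^2 \cdot 7^2,\ 2^3 \cdot 7^2,\ 2^4 \cdot 7^2\}$. One then obtains a newform $f$ of weight $2$ for $\Gamma_0(N')$ and a prime $\lambda \mid 7$ of $\mathcal{O}_f$ with $\rho_{E_{(a,b,c)},7} \sim \rho_{f,\lambda}$; the residue field of $\lambda$ can be taken to be $\F_7$ because the representation takes values in $\GL_2(\F_7)$ (its trace and determinant generate $\F_7$, so the image of $\mathcal{O}_f$ in the residue field is all of $\F_7$). The case of $F_{(a,b,c)}$ is identical, using that $\Delta_{\min} = -2^4 \cdot 7^4 \cdot c^7$ so that $v_p(\Delta_{\min}) = 7 v_p(c)$ for $p \mid c$, $p \neq 2,7$, and $v_7(\Delta_{\min}) = 4$ corresponds to additive reduction at $7$ (again confirmed by the nontrivial inertia action $\chi^{-1}\oplus\chi^2$ of \Cref{lemma: E7 inertia}).

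The main subtlety — and the step I would be most careful about — is verifying that level-lowering at primes $p \mid c$ genuinely applies: one needs $\rho_{E_{(a,b,c)},7}$ to be \emph{finite} at $p$ in Serre's sense, which for a prime of multiplicative reduction $p \neq 7$ is exactly the condition $7 \mid v_p(\Delta_{\min})$ (via the theory of the Tate curve, the image of inertia at $p$ is generated by a unipotent matrix raised to the power $v_p(\Delta_{\min}) \bmod 7$). This is where the exponent $7$ in the Fermat equation is essential, and where \Cref{lemma: E7 irreducible} is needed to guarantee irreducibility (so that Ribet's theorem is applicable at all). A secondary point is that one must check $E_{(a,b,c)}$ (resp.\ $F_{(a,b,c)}$) has no rational $7$-isogeny even after accounting for the possibility that removing a prime could in principle run into the Eisenstein case; but this is exactly \Cref{lemma: E7 irreducible}, so there is nothing further to do. Finally, I would remark that the three possible levels $2^2\cdot 7^2$, $2^3\cdot 7^2$, $2^4\cdot 7^2$ all have small dimension of newforms, which is what makes the subsequent analysis (matching against a finite explicit list of newforms) tractable.
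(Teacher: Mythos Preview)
Your proof is correct and follows essentially the same approach as the paper: modularity gives a newform at level $N$, \Cref{lemma: E7 irreducible} gives irreducibility, and Ribet's level-lowering removes each prime $p\mid c$ (with $p\nmid 14$) using $7\mid v_p(\Delta_{\min})$ from \Cref{lemma: basic properties Eabc Fabc}. You supply more detail than the paper (the Tate-curve explanation of finiteness, the remark on the residue field), but the skeleton is identical.
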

\begin{proof}
Let $E:=E_{(a,b,c)}$ or $F_{(a,b,c)}$ according to which equation we are considering. Let $N$ be its conductor and $N' := N / \prod_{\substack{p \mid\mid N \\ v_p(\Delta_{\min})}} p$. By \Cref{lemma: basic properties Eabc Fabc} we obtain $N'=2^f 7^2$ for some $f \in \{2,3,4\}$. By \Cref{lemma: E7 irreducible} we know that $E[7]$ is irreducible. The result then follows from modularity \cite{MR1839918} (which shows that the Galois representation $E[7]$ arises from a weight 2 newform of level $\Gamma_0(N)$) and level lowering \cite{Ribet1, Ribet2} (which allows us to lower the level of the form to $\Gamma_0(N')$).
\end{proof}

We collect in the next three subsections some basic data about modular forms of weight $2$ and level $\Gamma_0(N)$ for $N \in \{2^2 \cdot 7^2, 2^3 \cdot 7^2, 2^4 \cdot 7^2\}$. The structure of the tables is as follows: we introduce a name for each Galois orbit of newforms of the given level, give the corresponding LMFDB label, the ring generated by the Fourier coefficients, the first terms in the $q$-expansion (enough to identify the forms uniquely), and -- when the coefficients are rational -- the LMFDB identifier of a corresponding elliptic curve $E/\Q$. In many cases, we will use the knowledge of these elliptic curves to prove certain properties of the associated mod-$7$ representations.

\subsection{Level $2^2 \cdot 7^2 = 196$}

\begin{proposition}[Level $2^2 \cdot 7^2 = 196$]\label{prop: forms of level 196}
    Let $N=2^2 \cdot 7^2$. There are three Galois orbits of newforms $f_1, f_2, f_3$ of weight $2$ and level $\Gamma_0(N)$, described in \Cref{table: level 196}. Let $\mathfrak{p}_1=(7, \beta-3), \mathfrak{p}_2=(7,\beta+3)$ be the two primes of $\frac{\mathbb{Z}[\beta]}{(\beta^2-2)}$ with residue field $\mathbb{F}_7$.
    The Galois representations $\rho_{f_1, 7}$ and $\rho_{f_2, 7}$ are quadratic twists of one another, while $\rho_{f_3, \mathfrak{p}_1}$ and $\rho_{f_3, \mathfrak{p}_2}$ are respectively isomorphic to $\rho_{f_1, 7}$ and $\rho_{f_2, 7}$.
\end{proposition}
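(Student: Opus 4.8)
The plan is to separate the proof into a purely computational part---the enumeration of newforms---and a part that establishes the three comparison statements by matching reductions of Hecke eigenvalues. First I would compute the new subspace of $S_2(\Gamma_0(196))$ and decompose it into Galois orbits of newforms, finding exactly three orbits: $f_1, f_2$ with rational eigenvalues (hence corresponding to elliptic curves $E_1, E_2/\Q$ of conductor $196$) and $f_3$ with coefficient ring $\Z[\beta]/(\beta^2-2)$. This is routine in MAGMA (or may be read off from the LMFDB), is recorded in \Cref{table: level 196}, and settles the first sentence of the proposition. Since $3^2 \equiv 2 \pmod 7$, the prime $7$ splits in $\Z[\beta]/(\beta^2-2)$ as $\mathfrak p_1 \mathfrak p_2$ with $\mathfrak p_1 = (7,\beta-3)$ and $\mathfrak p_2 = (7,\beta+3)$, both with residue field $\F_7$; reduction modulo $\mathfrak p_1$ (resp.~$\mathfrak p_2$) is the ring map $\beta \mapsto 3$ (resp.~$\beta \mapsto -3$), and these two reductions are Galois-conjugate.

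For the comparison statements I would use the standard principle that the semisimplification of a residual representation attached to a newform is determined, via Brauer--Nesbitt and Chebotarev, by the reductions of the Hecke eigenvalues $a_\ell$ on a density-one set of primes $\ell$, hence---by the Sturm bound---by those for $\ell$ below an explicit constant $B$ depending only on the level. Here one may work throughout with level $196$, since twisting a level-$196$ form by a quadratic character of conductor $7$ produces a form of level dividing $\mathrm{lcm}(196,49)=196$; concretely $B$ may be taken slightly above the Sturm bound $\tfrac16 [\SL_2(\Z):\Gamma_0(196)] = 56$. Then:
\begin{enumerate}[(1)]
    \item I pin down the twisting character. Any $\psi$ with $f_1\otimes\psi=f_2$ must have conductor supported at $\{2,7\}$ with $\mathrm{cond}(\psi)^2 \mid 196$, and the only nontrivial such quadratic character is the one of conductor $7$: the Legendre symbol modulo $7$, equivalently $\chi^3$ with $\chi$ the mod-$7$ cyclotomic character, equivalently the character cutting out $\Q(\sqrt{-7})$ (the quadratic subfield of $\Q(\zeta_7)$). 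I then check $a_\ell(f_2) \equiv \psi(\ell)\,a_\ell(f_1) \pmod 7$ for all primes $\ell \le B$ with $\ell \nmid 14$, which yields $\rho_{f_2,7} \cong \rho_{f_1,7}\otimes\psi$.
    \item I check $a_\ell(f_3) \equiv a_\ell(f_1) \pmod{\mathfrak p_1}$ and $a_\ell(f_3) \equiv a_\ell(f_2) \pmod{\mathfrak p_2}$ for all primes $\ell \le B$ with $\ell \nmid 14$, which yields that the semisimplification of $\rho_{f_3,\mathfrak p_1}$ (resp.~$\rho_{f_3,\mathfrak p_2}$) is isomorphic to that of $\rho_{f_1,7}$ (resp.~$\rho_{f_2,7}$).
\end{enumerate}
Finally, to turn these into genuine isomorphisms of Galois representations, it suffices to observe that $\rho_{f_1,7}$ is irreducible: indeed $\rho_{f_1,7} = \rho_{E_1,7}$, and since the model of $E_1$ recorded in \Cref{table: level 196} has $v_2(j(E_1))>0$, \Cref{lemma: 7 isogenies} shows $E_1$ admits no rational $7$-isogeny, so $\rho_{E_1,7}$ is irreducible; twisting by $\psi$ preserves irreducibility, so $\rho_{f_2,7}$ is irreducible as well, and an irreducible representation coincides with its semisimplification.

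I expect the difficulties to be of a bookkeeping rather than a conceptual nature. The first is making sure the eigenvalue comparison in (1) genuinely excludes all the other quadratic characters of small conductor---this is precisely what the explicit check does, but it relies on the level/Sturm-bound analysis being carried out carefully. The second is being scrupulous that the finitely many primes $\ell$ compared really suffice for the Brauer--Nesbitt/Chebotarev argument at the relevant level; alternatively one can bypass this point entirely by citing the congruences between these newforms recorded in the LMFDB. No genuinely hard mathematics is involved: the only role of \Cref{lemma: 7 isogenies} is to upgrade equalities of traces to isomorphisms of representations.
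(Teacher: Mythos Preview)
Your proposal is correct and follows essentially the same approach as the paper: enumerate the newforms via MAGMA/LMFDB, read off the quadratic-twist relation between $f_1$ and $f_2$, and verify the congruences $a_n(f_3) \equiv a_n(f_i) \pmod{\mathfrak{p}_i}$ up to the Sturm bound $56$ for level $196$. Your explicit irreducibility step (via \Cref{lemma: 7 isogenies}, using $v_2(j(E_1)) = v_2(2^8 \cdot 7) > 0$) to upgrade from semisimplifications to genuine isomorphisms is a welcome addition that the paper leaves implicit.
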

\begin{proof}
From the LMFDB \cite{lmfdb} we read that $\rho_{f_1, 7}$ and $\rho_{f_2, 7}$ are quadratic twists of one another. To check that $\rho_{f_3, \mathfrak{p}_1}$ is isomorphic to $\rho_{f_1, 7}$ (resp.~that $\rho_{f_3, \mathfrak{p}_2}$ is isomorphic to $\rho_{f_2, 7}$) we apply Sturm's bound \cite[Theorem 1]{Sturm87}, checking the congruence of coefficients $a_n(f_1) \equiv a_n(f_3) \pmod{\mathfrak{p_1}}$ (and similarly for $f_2$ and $\mathfrak{p}_2$) for all $n$ up to $\lfloor \frac{km}{12} \rfloor=56$, where $k=2$ is the weight and $m = N \prod_{p \mid N} (1+\frac{1}{p}) = [\operatorname{SL}_2(\mathbb{Z}) : \Gamma_0(N)]$. The same congruence then holds for all $n$, which proves the desired isomorphism of representations.
\end{proof}

\begin{table}[h!]
\centering
\scriptsize
\begin{tabular}{@{}llllll@{}}
\toprule
Name & LMFDB Label & $\mathbb{Z}[a_n]$ & $q$-expansion & $E/\Q$ & See \\
\midrule
$f_1$ & \href{https://www.lmfdb.org/ModularForm/GL2/Q/holomorphic/196/2/a/a/}{196.2.a.a} & $\mathbb{Z}$ & $q - q^{3} - 3q^{5} - 2q^{9} - 3q^{11} - 2q^{13} + \cdots$ & \href{https://www.lmfdb.org/EllipticCurve/Q/196/a/2}{196.a2} & \Cref{prop: F cannot arise from f1 f2 f3 f4 f5 f6} \\
$f_2$ & \href{https://www.lmfdb.org/ModularForm/GL2/Q/holomorphic/196/2/a/b/}{196.2.a.b} & $\mathbb{Z}$ & $q + q^{3} + 3q^{5} - 2q^{9} - 3q^{11} + 2q^{13} + \cdots$ & \href{https://www.lmfdb.org/EllipticCurve/Q/196/b/2}{196.b2} \\
$f_3$ & \href{https://www.lmfdb.org/ModularForm/GL2/Q/holomorphic/196/2/a/c/}{196.2.a.c} & $\frac{\mathbb{Z}[\beta]}{(\beta^2-2)}$ & $q + 2\beta q^{3} - \beta q^{5} + 5q^{9} + 4q^{11} - 3\beta q^{13} + \cdots$ \\
\bottomrule
\end{tabular}
\caption{Modular forms data for level 196, weight 2}\label{table: level 196}
\end{table}

\subsection{Level $2^3 \cdot 7^2 = 392$}

The following result is proved exactly as \Cref{prop: forms of level 196}, namely, combining information from the LMFDB with Sturm's bound.
\begin{proposition}[Level $2^3 \cdot 7^2=392$]\label{prop: forms of level 392}
    Let $N=2^3 \cdot 7^2=392$. There are eight Galois orbits of newforms of weight $2$ and level $\Gamma_0(N)$, listed in \Cref{table: level 392}.
    \begin{enumerate}
        \item The two mod-$7$ representations arising from $f_6$ (corresponding to the two primes of norm $7$ in $\mathbb{Z}[a_n(f_6)]$) are isomorphic to $\rho_{f_4, 7}$ and $\rho_{f_5,7}$, which are quadratic twists of one another.
        \item The two mod-$7$ representations arising from $f_9$ (corresponding to the two primes of norm $7$ in $\mathbb{Z}[a_n(f_9)]$) are isomorphic to $\rho_{f_7, 7}$ and $\rho_{f_8,7}$, which are quadratic twists of one another.
    \end{enumerate}
\end{proposition}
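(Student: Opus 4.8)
The plan is to mirror, essentially verbatim, the proof of \Cref{prop: forms of level 196}. First I would consult the LMFDB \cite{lmfdb} to enumerate the Galois orbits of newforms of weight $2$ and level $\Gamma_0(392)$: there are eight, which we name $f_4,\dots,f_{11}$ and record in \Cref{table: level 392}, listing for each the LMFDB label, the ring $\mathbb{Z}[a_n]$ generated by the Fourier coefficients, enough terms of the $q$-expansion to identify it uniquely, and — when the coefficients are rational — the label of an associated elliptic curve over $\Q$. Among these, $f_4,f_5$ and $f_7,f_8$ have rational coefficients, while $f_6$ (resp.~$f_9$) has a quadratic coefficient ring in which $7$ splits into two primes of residue field $\F_7$; these are the only data entering the statement.

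For the quadratic-twist assertions, I would read off directly from the LMFDB that $\rho_{f_4,7}$ and $\rho_{f_5,7}$ (resp.~$\rho_{f_7,7}$ and $\rho_{f_8,7}$) are quadratic twists of one another, or, if one prefers a self-contained verification, twist one form by the relevant quadratic character and compare $q$-expansions up to Sturm's bound. For the isomorphisms $\rho_{f_6,\mathfrak{p}}\cong\rho_{f_4,7}$ and $\rho_{f_6,\mathfrak{p}'}\cong\rho_{f_5,7}$ (and the analogous statements for $f_9$), I would invoke Sturm's bound \cite[Theorem 1]{Sturm87}: here the weight is $k=2$ and $m=[\SL_2(\Z):\Gamma_0(392)]=392\cdot\tfrac{3}{2}\cdot\tfrac{8}{7}=672$, so the bound is $\lfloor km/12\rfloor=112$. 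Checking $a_n(f_6)\equiv a_n(f_4)\pmod{\mathfrak{p}}$ for all $n\le 112$ (and the three other analogous congruences) forces the same congruence for all $n$, hence an isomorphism of the semisimplifications of the mod-$7$ representations; since all these representations are irreducible — the associated elliptic curves have no rational $7$-isogeny, cf.~\Cref{lemma: 7 isogenies} — this upgrades to an isomorphism of the representations themselves. Deciding which prime of norm $7$ above $7$ in $\mathbb{Z}[a_n(f_6)]$ pairs with $f_4$ rather than $f_5$ is settled by exhibiting a single prime $\ell$ at which $a_\ell(f_4)\not\equiv a_\ell(f_5)\pmod 7$.

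There is no serious obstacle here: the whole argument is a small finite coefficient computation, which we carry out in MAGMA. The only points requiring care are bookkeeping — correctly labelling the eight orbits, correctly identifying the splitting of $7$ in the quadratic coefficient rings of $f_6$ and $f_9$, and pairing the primes above $7$ with the correct rational forms so that the twist relations in the two items of the proposition are mutually consistent.
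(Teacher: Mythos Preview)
Your proposal is correct and follows exactly the approach the paper takes: the paper simply states that the result ``is proved exactly as \Cref{prop: forms of level 196}, namely, combining information from the LMFDB with Sturm's bound,'' which is precisely what you do (with the correct Sturm bound $112$ at level $392$). Your extra remark on irreducibility to pass from semisimplifications to representations is a harmless refinement that the paper leaves implicit.
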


\begin{table}[h!]
\centering
\scriptsize
\begin{tabular}{@{}llllll@{}}
\toprule
Name & LMFDB Label & $\mathbb{Z}[a_n]$ & $q$-expansion & $E/\Q$ & See  \\
\midrule
$f_4$ & \href{https://www.lmfdb.org/ModularForm/GL2/Q/holomorphic/392/2/a/a/}{392.2.a.a}	& $\mathbb{Z}$ & $q-3q^{3}+q^{5}+6q^{9}-q^{11}-2q^{13}+\cdots$  &  \href{https://www.lmfdb.org/EllipticCurve/Q/392/a/1}{392.a1} & \Cref{prop: F cannot arise from f1 f2 f3 f4 f5 f6}
\\
 $f_5$ & \href{https://www.lmfdb.org/ModularForm/GL2/Q/holomorphic/392/2/a/f/}{392.2.a.f}	& $\mathbb{Z}$ & $q+3q^{3}-q^{5}+6q^{9}-q^{11}+2q^{13}+\cdots$  & \href{https://www.lmfdb.org/EllipticCurve/Q/392/f/1}{392.f1}\\ 
 $f_6$ & \href{https://www.lmfdb.org/ModularForm/GL2/Q/holomorphic/392/2/a/g/}{392.2.a.g}	& $\frac{\mathbb{Z}[\beta]}{(\beta^2-2)}$ & $q+\beta q^{3}+2\beta q^{5}-q^{9}+6q^{11}-4\beta q^{13}+\cdots$ \\ \hline
$f_7$ & \href{https://www.lmfdb.org/ModularForm/GL2/Q/holomorphic/392/2/a/c/}{392.2.a.c} & $\mathbb{Z}$ & $q-q^{3}-q^{5}-2q^{9}+3q^{11}-6q^{13}+\cdots$ & \href{https://www.lmfdb.org/EllipticCurve/Q/392/c/1}{392.c1} & \Cref{prop: 392 2 a c}  \\
$f_8$ & \href{https://www.lmfdb.org/ModularForm/GL2/Q/holomorphic/392/2/a/e/}{392.2.a.e} & $\mathbb{Z}$ & $q+q^{3}+q^{5}-2q^{9}+3q^{11}+6q^{13}+\cdots$ & \href{https://www.lmfdb.org/EllipticCurve/Q/392/e/1}{392.e1} &   \\ 
 $f_9$ & \href{https://www.lmfdb.org/ModularForm/GL2/Q/holomorphic/392/2/a/h/}{392.2.a.h} & $\frac{\mathbb{Z}[\beta]}{(\beta^2-8)}$ & $q+\beta q^{3}+\beta q^{5}+5q^{9}-4q^{11}-\beta q^{13}+\cdots$ \\ \hline
$f_{10}$ & \href{https://www.lmfdb.org/ModularForm/GL2/Q/holomorphic/392/2/a/b/}{392.2.a.b}	& $\mathbb{Z}$ & $q-2q^{3}+4q^{5}+q^{9}-8q^{15}+2q^{17}+\cdots$  & \href{https://www.lmfdb.org/EllipticCurve/Q/392/b/1}{392.b1} & \Cref{prop: 392 2 a b and d} \\ \hline
 $f_{11}$ & \href{https://www.lmfdb.org/ModularForm/GL2/Q/holomorphic/392/2/a/d/}{392.2.a.d}	& $\mathbb{Z}$ & $q-2q^{5}-3q^{9}-4q^{11}-2q^{13}+6q^{17}+\cdots$ & \href{https://www.lmfdb.org/EllipticCurve/Q/392/d/1}{392.d1} & \Cref{prop: 392 2 a b and d}\\ %\hline
\bottomrule
\end{tabular}
\caption{Modular forms data for level 392, weight 2}\label{table: level 392}
\end{table}

\subsection{Level $2^4 \cdot 7^2 = 784$}

Again in analogy with Propositions \ref{prop: forms of level 196} and \ref{prop: forms of level 392} we have:
\begin{proposition}[Level $2^4 \cdot 7^2=784$]\label{prop: forms of level 784}
    Let $N=2^4 \cdot 7^2=784$. There are nine Galois orbits of newforms of weight $2$ and level $\Gamma_0(N)$, listed in \Cref{table: level 784}.
    \begin{enumerate}[leftmargin=18pt]
        \item The two mod-7 representations arising from $f_{17}$ (corresponding to the two primes of norm $7$ in $\mathbb{Z}[a_n(f_{17})]$) are isomorphic to $\rho_{f_{15},7}$ and $\rho_{f_{16},7}$, which are quadratic twists of each other and of $\rho_{f_1, 7}$.
        \item The two mod-7 representations arising from $f_{20}$ (corresponding to the two primes of norm $7$ in $\mathbb{Z}[a_n(f_{20})]$) are isomorphic to $\rho_{f_{18},7}$ and $\rho_{f_{19},7}$, which are quadratic twists of each other and of $\rho_{f_4, 7}$.
        \item The two mod-7 representations arising from $f_{25}$ (corresponding to the two primes of norm $7$ in $\mathbb{Z}[a_n(f_{25})]$) are isomorphic to $\rho_{f_{23},7}$ and $\rho_{f_{24},7}$, which are quadratic twists of each other and of $\rho_{f_7, 7}$.
    \end{enumerate}
\end{proposition}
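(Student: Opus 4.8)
The plan is to argue exactly as for Propositions~\ref{prop: forms of level 196} and~\ref{prop: forms of level 392}: every assertion in the proposition is a statement about reductions modulo~$7$ of systems of Hecke eigenvalues, and each of them can be certified by a finite computation controlled by Sturm's bound. Concretely, I would first read off from the LMFDB~\cite{lmfdb} the nine Galois orbits of weight-$2$ newforms of level $\Gamma_0(784)$ together with their coefficient rings, recording which orbits have rational coefficients and which ones — namely $f_{17}$, $f_{20}$, $f_{25}$ — have a quadratic coefficient ring containing a prime of norm~$7$. The quadratic-twist relations among the rational orbits (that $\rho_{f_{15},7}$ and $\rho_{f_{16},7}$ are twists of each other and of $\rho_{f_1,7}$, and similarly for the $f_{18},f_{19}$ family relative to $\rho_{f_4,7}$ and the $f_{23},f_{24}$ family relative to $\rho_{f_7,7}$) are already recorded in the LMFDB; alternatively, for the appropriate quadratic discriminant $d$ one checks the congruence $a_\ell(f_{15}) \equiv \left(\tfrac{d}{\ell}\right) a_\ell(f_1) \pmod 7$ for all primes $\ell$ below the Sturm bound, and likewise for the other pairs.

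Next, for each $f \in \{f_{17}, f_{20}, f_{25}\}$ I would fix the two primes $\mathfrak{p}_1, \mathfrak{p}_2$ of $\mathcal{O}_f = \mathbb{Z}[a_n(f) : n \in \mathbb{N}]$ lying above $7$ with residue field $\mathbb{F}_7$, and verify the congruences $a_n(f) \equiv a_n(f_{15}) \pmod{\mathfrak{p}_1}$ and $a_n(f) \equiv a_n(f_{16}) \pmod{\mathfrak{p}_2}$, together with the analogous congruences for the $f_{20}$- and $f_{25}$-families, for all $n \le \lfloor \tfrac{km}{12}\rfloor = 224$, where $k=2$ is the weight and $m = [\operatorname{SL}_2(\mathbb{Z}):\Gamma_0(784)] = 784\prod_{p\mid 784}(1+\tfrac{1}{p}) = 1344$. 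By Sturm's theorem~\cite[Theorem 1]{Sturm87} the congruences then hold for every $n$, so the relevant systems of Hecke eigenvalues agree modulo the corresponding primes. Since all the Galois representations occurring here are irreducible — which can be read off the LMFDB, or deduced from \Cref{lemma: 7 isogenies} applied to the $j$-invariants of the associated elliptic curves — the Brauer--Nesbitt theorem together with the Chebotarev density theorem upgrades the equality of traces to an isomorphism of mod-$7$ Galois representations, which is precisely what the proposition asserts.

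The only genuine difficulty here is bookkeeping: at level $784$ the Sturm bound forces us to compare $224$ coefficients for each of several congruences, and one must take care to match the two norm-$7$ primes of each quadratic order with the correct pair of rational twists — a matching that is pinned down as soon as one inspects a single prime $\ell$ that splits in the appropriate way. All of this is a routine computation in MAGMA, and the corresponding script is included in the accompanying repository.
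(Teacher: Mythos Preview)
Your proposal is correct and follows essentially the same approach as the paper, which simply states that the result is proved ``in analogy with Propositions~\ref{prop: forms of level 196} and~\ref{prop: forms of level 392}'', i.e.\ by combining twist information from the LMFDB with Sturm's bound. Your explicit computation of the Sturm bound $\lfloor 2\cdot 1344/12\rfloor = 224$ for level $784$ and your remark on irreducibility (needed to pass from equality of semisimplifications to genuine isomorphism) are exactly the details one would fill in when unpacking the paper's one-line reference to the earlier propositions.
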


\begin{table}[h!]
\centering
\scriptsize
\begin{tabular}{@{}llllll@{}}
\toprule
Name & LMFDB Label & $\mathbb{Z}[a_n]$ & $q$-expansion & $E/\Q$  & See \\
\midrule
$f_{12}$ & \href{https://www.lmfdb.org/ModularForm/GL2/Q/holomorphic/784/2/a/b/}{784.2.a.b} & $\mathbb{Z}$ &$q - 2q^{3} + q^{9} + 4q^{13} - 6q^{17} + 2q^{19} + \cdots$ & \href{https://www.lmfdb.org/EllipticCurve/Q/784/b/5}{784.b5} & \Cref{prop: 392 2 a b and d} \\ \hline
$f_{13}$ & \href{https://www.lmfdb.org/ModularForm/GL2/Q/holomorphic/784/2/a/l/}{784.2.a.l} & $\frac{\mathbb{Z}[\beta]}{(\beta^2-2)}$ &$q + \beta q^{3} + 2\beta q^{5} - q^{9} + 2q^{11} + 4q^{15} + \cdots$ &  & \Cref{prop: 784.2.a.l} \\ \hline
$f_{14}$ & \href{https://www.lmfdb.org/ModularForm/GL2/Q/holomorphic/784/2/a/f/}{784.2.a.f} & $\mathbb{Z}$ &$q - 3q^{9} - 4q^{11} - 8q^{23} - 5q^{25} + 2q^{29} + \cdots$ & \href{https://www.lmfdb.org/EllipticCurve/Q/784/f/1}{784.f4} & \Cref{prop: 784.2.a.f} \\ \hline
$f_{15}$ & \href{https://www.lmfdb.org/ModularForm/GL2/Q/holomorphic/784/2/a/d/}{784.2.a.d} & $\mathbb{Z}$ &$q - q^{3} + 3q^{5} - 2q^{9} + 3q^{11} + 2q^{13} + \cdots$ & \href{https://www.lmfdb.org/EllipticCurve/Q/784/d/2}{784.d2}  & 196.2.a.a \\
$f_{16}$ &\href{https://www.lmfdb.org/ModularForm/GL2/Q/holomorphic/784/2/a/g/}{784.2.a.g} & $\mathbb{Z}$ &$q + q^{3} - 3q^{5} - 2q^{9} + 3q^{11} - 2q^{13} + \cdots$ & \href{https://www.lmfdb.org/EllipticCurve/Q/784/g/2}{784.g2} \\
$f_{17}$ & \href{https://www.lmfdb.org/ModularForm/GL2/Q/holomorphic/784/2/a/m/}{784.2.a.m} & $\frac{\mathbb{Z}[\beta]}{(\beta^2-2)}$ &$q + 2\beta q^{3} + \beta q^{5} + 5q^{9} - 4q^{11} + 3\beta q^{13} + \cdots$ \\ \hline
$f_{18}$ & \href{https://www.lmfdb.org/ModularForm/GL2/Q/holomorphic/784/2/a/a/}{784.2.a.a} & $\mathbb{Z}$ & $q - 3q^{3} - q^{5} + 6q^{9} + q^{11} + 2q^{13} + \cdots$ & \href{https://www.lmfdb.org/EllipticCurve/Q/784/a/1}{784.a1} & 392.2.a.a \\
$f_{19}$ &\href{https://www.lmfdb.org/ModularForm/GL2/Q/holomorphic/784/2/a/j/}{784.2.a.j} & $\mathbb{Z}$ & $q + 3q^{3} + q^{5} + 6q^{9} + q^{11} - 2q^{13} + \cdots$ & \href{https://www.lmfdb.org/EllipticCurve/Q/784/j/1}{784.j1} \\
$f_{20}$ &\href{https://www.lmfdb.org/ModularForm/GL2/Q/holomorphic/784/2/a/k/}{784.2.a.k} & $\frac{\mathbb{Z}[\beta]}{(\beta^2-2)}$ & $q + \beta q^{3} - 2\beta q^{5} - q^{9} - 6q^{11} + 4\beta q^{13} + \cdots$ \\ \hline
$f_{21}$ &\href{https://www.lmfdb.org/ModularForm/GL2/Q/holomorphic/784/2/a/i/}{784.2.a.i} & $\mathbb{Z}$ & $q + 2q^{3} + 4q^{5} + q^{9} + 8q^{15} + 2q^{17} + \cdots$ & \href{https://www.lmfdb.org/EllipticCurve/Q/784/i/2}{784.i2} & \Cref{prop: 392 2 a b and d}, 392.2.a.b \\ \hline
$f_{22}$ & \href{https://www.lmfdb.org/ModularForm/GL2/Q/holomorphic/784/2/a/e/}{784.2.a.e} & $\mathbb{Z}$ & $q - 2q^{5} - 3q^{9} + 4q^{11} - 2q^{13} + 6q^{17} + \cdots$ & \href{https://www.lmfdb.org/EllipticCurve/Q/784/e/4}{784.e4} & \Cref{prop: 392 2 a b and d}, 392.2.a.d \\ \hline
$f_{23}$ & \href{https://www.lmfdb.org/ModularForm/GL2/Q/holomorphic/784/2/a/c/}{784.2.a.c}  & $\mathbb{Z}$ &$q - q^{3} + q^{5} - 2q^{9} - 3q^{11} + 6q^{13} + \cdots$ & \href{https://www.lmfdb.org/EllipticCurve/Q/784/c/1}{784.c1} & \Cref{prop: 392 2 a c}, 392.2.a.c \\
$f_{24}$ & \href{https://www.lmfdb.org/ModularForm/GL2/Q/holomorphic/784/2/a/b/}{784.2.a.h} & $\mathbb{Z}$ &$q + q^{3} - q^{5} - 2q^{9} - 3q^{11} - 6q^{13} + \cdots$ & \href{https://www.lmfdb.org/EllipticCurve/Q/784/h/1}{784.h1} &  \\ 
$f_{25}$ & \href{https://www.lmfdb.org/ModularForm/GL2/Q/holomorphic/784/2/a/n/}{784.2.a.n}  & $\frac{\mathbb{Z}[\beta]}{(\beta^2-8)}$ &$q + \beta q^{3} - \beta q^{5} + 5q^{9} + 4q^{11} + \beta q^{13} + \cdots$ & & \\
\bottomrule
\end{tabular}
\caption{Modular forms data for level 784, weight 2}\label{table: level 784}
\end{table}

\subsection{Ruling out modular forms}

We now give several criteria that show the the mod-$7$ representation attached to a curve $E_{(a,b,c)}$ or $F_{(a,b,c)}$ cannot arise from specific modular forms.

\begin{proposition}\label{prop: 392 2 a b and d}
    Let $E'/\mathbb{Q}$ be an elliptic curve with additive, potentially multiplicative reduction at 7. 
    Let $(a,b,c)$ be a $(\star)$-solution of \Cref{eq: Cns} (resp.~\Cref{eq: C sharp}) and let $E := E_{(a,b,c)}$ (resp.~$E := F_{(a,b,c)}$) be the corresponding elliptic curve, as in \Cref{def: Eabc Fabc}. We have $E[7] \not \cong E'[7]$. In particular, we have $\rho_{E, 7} \not \sim \rho_{f_{i}, 7}$ for every $i \in \{10, 11, 12, 21, 22\}$.
\end{proposition}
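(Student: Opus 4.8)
The plan is to separate $E[7]$ from $E'[7]$ by restricting to an inertia subgroup $I_7$ at $7$. The underlying dichotomy is that $E_{(a,b,c)}$ and $F_{(a,b,c)}$ have potentially good reduction at $7$ (\Cref{lemma: basic properties Eabc Fabc}, \Cref{lemma: E7 inertia}) while $E'$ has potentially multiplicative reduction there; the feature that makes the two situations genuinely incompatible is that, since $E'$ has \emph{additive} reduction at $7$, the quadratic twist turning $E'$ multiplicative is \emph{ramified} at $7$. Write $\chi$ for the mod-$7$ cyclotomic character. Because $\Q(\zeta_7)/\Q$ is totally and tamely ramified at $7$, the restriction $\chi|_{I_7}$ has exact order $6$. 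By \Cref{lemma: E7 inertia}, the Jordan--Hölder characters of $(\rho_{E_{(a,b,c)},7}|_{I_7})^{\mathrm{ss}}$ are $\chi^0|_{I_7}$ and $\chi^1|_{I_7}$, of orders $1$ and $6$, while those of $(\rho_{F_{(a,b,c)},7}|_{I_7})^{\mathrm{ss}}$ are $\chi^{-1}|_{I_7}$ and $\chi^2|_{I_7}$, of orders $6$ and $3$. In particular, in neither case does a character of order exactly $2$ occur.

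First I would record the shape of $\rho_{E',7}|_{I_7}$ for an arbitrary $E'/\Q$ with additive, potentially multiplicative reduction at $7$. Over $\mathbb{Q}_7$, such a curve is the quadratic twist, by some quadratic character $\psi$, of a Tate curve $E_q$ with $v_7(q)>0$; because the reduction of $E'$ is additive, the character $\psi$ is ramified at $7$, as an unramified or trivial $\psi$ would leave the reduction multiplicative. As $\psi$ is quadratic and ramified and $-1\neq 1$ in $\F_7$, the character $\psi|_{I_7}$ has order exactly $2$. Since $\rho_{E_q,7}$ restricted to a decomposition group at $7$ is an extension of the trivial character by $\chi$, the Jordan--Hölder characters of $(\rho_{E',7}|_{I_7})^{\mathrm{ss}}$ are $\psi|_{I_7}$ and $\psi|_{I_7}\cdot\chi|_{I_7}$; one of them, namely $\psi|_{I_7}$, has order $2$.

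Putting the two paragraphs together finishes the proof: an isomorphism $E[7]\cong E'[7]$ of Galois modules would, after restriction to $I_7$ and semisimplification, identify the two multisets of Jordan--Hölder characters, hence force an order-$2$ character into $\{\chi^0|_{I_7},\chi^1|_{I_7}\}$ (resp.\ $\{\chi^{-1}|_{I_7},\chi^2|_{I_7}\}$) --- impossible. For the final sentence of the statement, one checks from the LMFDB that the newforms $f_{10},f_{11}$ and $f_{12},f_{21},f_{22}$ have rational Hecke eigenvalues and correspond to the elliptic curves \verb|392.b1|, \verb|392.d1|, \verb|784.b5|, \verb|784.i2|, \verb|784.e4|, each of which has additive, potentially multiplicative reduction at $7$; then $\rho_{E,7}\sim\rho_{f_i,7}$ would give $(\rho_{E,7}|_{I_7})^{\mathrm{ss}}\cong(\rho_{E',7}|_{I_7})^{\mathrm{ss}}$ for the corresponding $E'$, a contradiction. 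The computations involved (orders of a few characters, reduction types of five explicit curves) are routine, and there is no real obstacle; the only point that needs a little care is that the comparison must be made after semisimplification, which is precisely why the weak relation $\rho_{E,7}\sim\rho_{f_i,7}$ --- identifying only Frobenius traces --- suffices, and irreducibility of $E[7]$ need not enter.
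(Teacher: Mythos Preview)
Your proof is correct and follows essentially the same route as the paper: both arguments separate $E[7]$ from $E'[7]$ by comparing the Jordan--H\"older characters of the restriction to inertia at $7$, using \Cref{lemma: E7 inertia} for the $E$ side. The only cosmetic difference is that the paper cites a result of Kraus to identify the $E'$-side characters directly as $\chi^3$ and $\chi^4$, whereas you derive them from the Tate-curve description as $\psi$ and $\psi\chi$ with $\psi$ ramified quadratic, and then compare orders rather than exponents; since the unique order-$2$ character of $I_7$ is $\chi^3$, the two computations coincide.
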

\begin{proof}
    Let $I_7$ be an inertia group at $7$. By \Cref{lemma: E7 inertia}, the semisimplification of the restriction of $E[7]$ to $I_7$ is isomorphic to $\chi^0 \oplus \chi^1$.
    On the other hand, $E'$ has additive, potentially multiplicative reduction at 7. By \cite[Proposition 10 on page 26]{kraus97}, which applies since $E'$ has bad additive reduction, the restriction of $E'[7]$ to $I_7$ is isomorphic to $\chi^{\frac{p-1}{2}} \oplus \chi^{\frac{p+1}{2}} = \chi^3 \oplus \chi^4$. Since $\{\chi^0, \chi\} \neq \{\chi^3, \chi^4\}$, the restrictions of $E[7]$ and $E'[7]$ to $I_7$ are not isomorphic, hence a fortiori $E[7] \not \cong E'[7]$. The argument for $F$ is identical, using $\{\chi^{-1}, \chi^2\} \neq \{\chi^3, \chi^4\}$.
    The final statement follows because the mod-7 representations attached to $f_{10}, f_{11}, f_{12}, f_{22}$, and $f_{23}$ are the mod-7 representations given by the elliptic curves with LMFDB labels \href{https://www.lmfdb.org/EllipticCurve/Q/392/b/1}{392.b1}, \href{https://www.lmfdb.org/EllipticCurve/Q/392/d/1}{392.d1}, \href{https://www.lmfdb.org/EllipticCurve/Q/784/b/5}{784.b5}, \href{https://www.lmfdb.org/EllipticCurve/Q/784/i/2}{784.i2}, and \href{https://www.lmfdb.org/EllipticCurve/Q/784/e/4}{784.e4}, which have additive, potentially multiplicative reduction at $7$.
\end{proof}

\begin{proposition}\label{prop: 784.2.a.f}
    Let $E$ be one of the elliptic curves $E_{(a,b,c)}, F_{(a,b,c)}$ of \Cref{def: Eabc Fabc}. We have $\rho_{E,7} \not \sim \rho_{f_{14},7}$.
\end{proposition}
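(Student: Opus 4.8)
The plan is to observe that $\rho_{f_{14},7}$ is \emph{reducible}, which together with \Cref{lemma: E7 irreducible} settles the claim at once. Indeed, if we had $\rho_{E,7} \sim \rho_{f_{14},7}$, then the two representations would have isomorphic semisimplifications (recall that the traces $a_\ell$ over a density-one set of primes determine the semisimplification), forcing $\rho_{E,7}$ to be reducible as well; but by \Cref{lemma: E7 irreducible} the representation $\rho_{E,7}$ attached to $E_{(a,b,c)}$ or $F_{(a,b,c)}$ is irreducible, a contradiction.

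So it remains to check that $\rho_{f_{14},7}$ is reducible. The newform $f_{14}$ (LMFDB label \verb|784.2.a.f|) has complex multiplication by $K := \Q(\sqrt{-7})$; this can be read off directly from the LMFDB, or verified from the $q$-expansion, which shows that $a_p(f_{14})=0$ precisely for the primes $p$ inert in $K$ (that is, for $p \equiv 3,5,6 \pmod 7$). Let $E_{f_{14}}$ be the elliptic curve \verb|784.f4| attached to $f_{14}$, which therefore has CM by an order in $K$. Since $7$ ramifies in $K$, say $(7)=\mathfrak{p}^2$, and $\mathfrak{p}$ is fixed by $\Gal(K/\Q)$, the order-$7$ subgroup $E_{f_{14}}[\mathfrak{p}] \subseteq E_{f_{14}}[7]$ is stable under the whole absolute Galois group of $\Q$; in other words $E_{f_{14}}$ admits a rational $7$-isogeny, so $\operatorname{Im}\rho_{f_{14},7}$ is contained in a Borel subgroup of $\GL_2(\F_7)$, i.e.\ $\rho_{f_{14},7}$ is reducible, as wanted.

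I do not anticipate any genuine obstacle in this argument: the only point needing verification is the CM of $f_{14}$ (equivalently, the existence of a rational $7$-isogeny on \verb|784.f4|), which is routine and can in any case be read off directly from the LMFDB.
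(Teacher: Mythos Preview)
Your proof is correct and follows essentially the same approach as the paper's: both arguments observe that the elliptic curve \texttt{784.f4} attached to $f_{14}$ has CM by (an order in) $\Q(\sqrt{-7})$, deduce that it admits a rational $7$-isogeny (you spell out the CM-theoretic reason via the ramified prime above $7$, while the paper simply asserts it), hence that $\rho_{f_{14},7}$ is reducible, and then conclude by contrasting with the irreducibility of $\rho_{E,7}$ from \Cref{lemma: E7 irreducible}.
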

\begin{proof}
    The mod-7 representation $\rho_{f_{14},7}$ is isomorphic to the mod-7 representation of the elliptic curve \cite[\href{https://www.lmfdb.org/EllipticCurve/Q/784/f/4}{784.f4}]{lmfdb}, which has CM by the ring $\mathbb{Z}[\frac{1+\sqrt{-7}}{2}]$. Denote this elliptic curve by $E'$. One checks easily (either directly or using CM theory) that $E'$ admits a 7-isogeny defined over $\mathbb{Q}$, so $E'[7]$ (and hence $\rho_{f_{14},7}$) is reducible. Since we know from \Cref{lemma: E7 irreducible} that $\rho_{E,7}$ is irreducible, we cannot have $\rho_{E,7}  \sim \rho_{f_{14},7}$
\end{proof}

\begin{proposition}\label{prop: 392 2 a c}
    Let $(a,b,c)$ be a $(\star)$-solution of \Cref{eq: Cns} and let $E := E_{(a,b,c)}$ be the corresponding elliptic curve, as in \Cref{def: Eabc Fabc}. Let $E'/\mathbb{Q}$ be an elliptic curve having additive, potentially good reduction at $7$. If $v_7(\Delta_{E',\min}) \in \{4,10\}$, then $\rho_{E, 7} \not \sim \rho_{E', 7}$. In particular, we have $\rho_{E_{(a,b,c)}, 7} \not \cong \rho_{f_{i}, 7}$ for every $i \in \{7, 8, 23, 24\}$.
\end{proposition}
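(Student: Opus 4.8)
The plan is to obstruct any isomorphism by restricting to an inertia subgroup $I_7$ at $7$. By \Cref{lemma: E7 inertia}(1), the semisimplification of $\rho_{E,7}|_{I_7}$ is $\chi^0\oplus\chi^1$, where $\chi$ is the mod-$7$ cyclotomic character; recall that $\chi|_{I_7}$ has order exactly $6$, because $\mathbb{Q}_7(\mu_7)/\mathbb{Q}_7$ is totally tamely ramified of degree $6$. Since $\rho_{E,7}\sim\rho_{E',7}$ would force $\rho_{E,7}^{\mathrm{ss}}\cong\rho_{E',7}^{\mathrm{ss}}$ (Brauer--Nesbitt together with the Chebotarev density theorem), and hence $(\rho_{E,7}|_{I_7})^{\mathrm{ss}}\cong(\rho_{E',7}|_{I_7})^{\mathrm{ss}}$, it is enough to show that $(\rho_{E',7}|_{I_7})^{\mathrm{ss}}$ is not $\chi^0\oplus\chi^1$.

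The first step is to pin down the reduction type of $E'$ at $7$. Since $v_7(\Delta_{E',\min})\in\{4,10\}$, the curve $E'$ acquires good reduction over a totally, tamely ramified extension $L/\mathbb{Q}_7$ of degree $e=12/\gcd(12,v_7(\Delta_{E',\min}))\in\{3,6\}$, and the cyclic group $\Gal(L/\mathbb{Q}_7^{\mathrm{nr}})$ embeds into $\Aut(\tilde{E}'_{\overline{\mathbb{F}}_7})$; as this group has order divisible by $3$, while the automorphism group of an elliptic curve in characteristic $7$ has order $2$, $4$ or $6$, we conclude $\#\Aut(\tilde{E}'_{\overline{\mathbb{F}}_7})=6$, i.e.\ $j(\tilde{E}')=0$. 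Because $7\equiv1\pmod3$, \Cref{prop: j and Hasse} then shows that $E'$ has potentially good \emph{ordinary} reduction at $7$. (Alternatively, one can split into cases: if $E'$ had potentially supersingular reduction at $7$, then $\rho_{E',7}|_{I_7}$ would be absolutely irreducible — it diagonalises only over $\mathbb{F}_{49}$ — so its semisimplification could not be a sum of two characters at all.)

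Having established that $E'$ is potentially good ordinary at $7$, I would apply \cite[Proposition 1 on page 9]{kraus97} exactly as in the proof of \Cref{lemma: E7 inertia}: the semisimplification of $\rho_{E',7}|_{I_7}$ is $\chi^{1-\alpha'}\oplus\chi^{\alpha'}$ with $\alpha'=(7-1)\,v_7(\Delta_{E',\min})/12=v_7(\Delta_{E',\min})/2\in\{2,5\}$. Using $\chi^6=1$ on $I_7$, the unordered pair $\{\chi^{1-\alpha'},\chi^{\alpha'}\}$ equals $\{\chi^2,\chi^5\}$ in both cases, and this is disjoint from $\{\chi^0,\chi^1\}$ since $\chi|_{I_7}$ has order $6$. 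Therefore $(\rho_{E',7}|_{I_7})^{\mathrm{ss}}\not\cong(\rho_{E,7}|_{I_7})^{\mathrm{ss}}$, whence $\rho_{E,7}\not\sim\rho_{E',7}$.

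For the final assertion, I would note that for each $i\in\{7,8,23,24\}$ the representation $\rho_{f_i,7}$ is isomorphic to the mod-$7$ representation of the elliptic curve listed next to $f_i$ in \Cref{table: level 392} or \Cref{table: level 784} (namely $392.c1$, $392.e1$, $784.c1$, $784.h1$), and then check from the LMFDB that each of these curves has additive, potentially good reduction at $7$ with the $7$-adic valuation of its minimal discriminant equal to $4$ or $10$. Applying the first part with $E'$ equal to each of these curves yields $\rho_{E_{(a,b,c)},7}\not\sim\rho_{f_i,7}$, and since $\rho_{E_{(a,b,c)},7}$ is irreducible by \Cref{lemma: E7 irreducible}, this improves to $\rho_{E_{(a,b,c)},7}\not\cong\rho_{f_i,7}$. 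The only point requiring real care is ensuring that Kraus's formula is applicable — i.e.\ that $E'$ is potentially good \emph{ordinary} at $7$, not supersingular — which is forced by the hypothesis $v_7(\Delta_{E',\min})\in\{4,10\}$ as explained above; everything else is bookkeeping with powers of $\chi$ together with a database lookup.
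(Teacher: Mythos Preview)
Your proof is correct and follows essentially the same approach as the paper: restrict both representations to inertia at $7$, compute the semisimplifications via \cite[Proposition~1]{kraus97}, and observe that the resulting pairs of characters do not match. The only difference is that you justify the potential ordinariness of $E'$ at $7$ more carefully (via the embedding of the inertia quotient of order $3$ or $6$ into $\Aut(\tilde{E}')$, forcing $j(\tilde{E}')=0$), whereas the paper simply asserts $j(E')\equiv 0\pmod 7$ and invokes \Cref{prop: j and Hasse}; both routes lead to the same conclusion.
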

\begin{proof}
    Let $I_7$ be an inertia group at $7$. It suffices to show that $E[7] \not \cong E'[7]$ after restriction to $I_7$.
    By \Cref{lemma: E7 inertia}, the semisimplification of the restriction of $E[7]$ to $I_7$ is isomorphic to $\chi^0 \oplus \chi^1$.
    On the other hand, $v_7(\Delta_{\operatorname{min}}(E')) > 0$, so $E'$ has bad, potentially good reduction at 7; by \Cref{prop: j and Hasse}, the reduction is potentially ordinary since $j(E) \equiv 0 \pmod{7}$ and $7 \equiv 1 \pmod{3}$.
    By \cite[Proposition 1 on page 9]{kraus97}, the semisimplification of the restriction of $E'[7]$ to $I_7$ is isomorphic to $\chi^{\alpha} \oplus \chi^{1-\alpha}$ with $\alpha = (p-1) \cdot \frac{v(\Delta_{\min}(E'))}{12} = 2$ or $5$. Since $\{\chi^0, \chi^1\} \neq \{\chi^2, \chi^{-1}\}$ and $\{\chi^0, \chi^1\} \neq \{\chi^5, \chi^{-4}\}$ we are done.

    For the final statement, note that the mod-7 representations arising from the forms $f_7, f_8, f_{23}, f_{24}$ coincide with the mod-7 representations of the elliptic curves $E'$ with LMFDB labels \href{https://www.lmfdb.org/EllipticCurve/Q/392/c/1}{392.c1}, \href{https://www.lmfdb.org/EllipticCurve/Q/392/e/1}{392.e1}, \href{https://www.lmfdb.org/EllipticCurve/Q/784/c/1}{784.c1}, \href{https://www.lmfdb.org/EllipticCurve/Q/784/h/1}{784.h1}, which satisfy the assumptions of the proposition.
\end{proof}

We note explicitly that the previous proposition does not extend to show $\rho_{F_{(a,b,c)}, 7} \not \cong \rho_{f_7, 7}$ -- in fact, the non-trivial solution $(a,b,c) = (13, 196, -1)$ of \Cref{eq: C sharp} gives rise to an elliptic curve $F := F_{(a,b,c)}$ of conductor 392, corresponding to the modular form $f_7$ (see \cite[\href{https://www.lmfdb.org/EllipticCurve/Q/392/c/1}{elliptic curve 392.c1}]{lmfdb}). However, a very similar argument shows the following:
\begin{proposition}\label{prop: F cannot arise from f1 f2 f3 f4 f5 f6}
    Let $(a,b,c)$ be a $(\star)$-solution of \Cref{eq: C sharp} and let $F := F_{(a,b,c)}$ be the corresponding elliptic curve, as in \Cref{def: Eabc Fabc}. 
Let $E'/\mathbb{Q}$ be an elliptic curve having additive, potentially good reduction at $7$. If $v_7(\Delta_{E',\min}) \in \{2,8\}$, then $\rho_{F, 7} \not \sim \rho_{E', 7}$. In particular, we have $\rho_{F, 7} \not \cong \rho_{f_{i}, 7}$ for every $i \in \{1, 2, 4, 5 \}$.
\end{proposition}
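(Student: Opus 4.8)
The plan is to mimic the proof of \Cref{prop: 392 2 a c} almost verbatim, replacing the inertia type of $E_{(a,b,c)}$ by that of $F_{(a,b,c)}$ supplied by \Cref{lemma: E7 inertia}(2). I would work with the restrictions of all the relevant mod-$7$ representations to an inertia subgroup $I_7$ at $7$, and show that $F[7]$ and $E'[7]$ fail to be isomorphic already as $I_7$-representations, which is strictly stronger than $\rho_{F,7}\not\sim\rho_{E',7}$.

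First I would record from \Cref{lemma: E7 inertia}(2) that the semisimplification of $\rho_{F,7}|_{I_7}$ is $\chi^{-1}\oplus\chi^2$, i.e.\ $\chi^5\oplus\chi^2$ since $\chi$ has order $6$. Next, given $E'/\Q$ with additive, potentially good reduction at $7$ and $v_7(\Delta_{E',\min})\in\{2,8\}$, I would check that $E'$ is potentially \emph{ordinary} at $7$: the values $2$ and $8$ correspond to Kodaira types $\mathrm{II}$ and $\mathrm{IV}^*$, whose semistability defect at $7$ is $6$ and $3$ respectively, hence divisible by $3$, which forces the good-reduction curve acquired over a ramified extension to have extra automorphisms, i.e.\ $j(E')\equiv 0\pmod 7$; then \Cref{prop: j and Hasse} together with $7\equiv 1\pmod 3$ gives potential ordinarity. (Alternatively, one simply checks $j\equiv 0\pmod 7$ for the handful of curves that actually occur.) With potential ordinarity established, \cite[Proposition 1 on page 9]{kraus97} shows that the semisimplification of $\rho_{E',7}|_{I_7}$ is $\chi^{\alpha}\oplus\chi^{1-\alpha}$ with $\alpha=(p-1)v_7(\Delta_{E',\min})/12\in\{1,4\}$, so the unordered pair of inertia characters is $\{\chi^0,\chi^1\}$ (when $\alpha=1$) or $\{\chi^4,\chi^{-3}\}=\{\chi^3,\chi^4\}$ (when $\alpha=4$). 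Neither equals $\{\chi^2,\chi^5\}$, so $F[7]\not\cong E'[7]$.

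Finally, to obtain the statement about newforms, I would use that $\rho_{f_1,7},\rho_{f_2,7},\rho_{f_4,7},\rho_{f_5,7}$ are the mod-$7$ representations of the elliptic curves \texttt{196.a2}, \texttt{196.b2}, \texttt{392.a1}, \texttt{392.f1} recorded in \Cref{table: level 196,table: level 392}; each has conductor divisible by exactly $7^2$, hence additive and (since the exponent of $7$ in the conductor is $2$) potentially good reduction at $7$, and one reads off from the LMFDB that $v_7(\Delta_{\min})\in\{2,8\}$ in each case, so the first part applies. I do not expect a genuine obstacle here; the only points demanding care are the bookkeeping of exponents of $\chi$ modulo $6$ and confirming from the explicit data that the four curves are potentially ordinary at $7$ with the stated discriminant valuations, both routine finite checks. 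I would also flag — as the remark following the statement already does — that this argument does \emph{not} rule out $f_7, f_8$, whose curves have $v_7(\Delta_{\min})\in\{4,10\}$ and hence inertia type $\chi^2\oplus\chi^{-1}=\{\chi^2,\chi^5\}$, matching that of $F$; this is consistent with the solution $(a,b,c)=(13,196,-1)$ producing an $F_{(a,b,c)}$ of conductor $392$ associated with $f_7$.
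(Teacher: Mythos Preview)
Your proof is correct and follows essentially the same strategy as the paper's, comparing the semisimplified inertia types at $7$ via \cite{kraus97}. In fact your handling of the case $v_7(\Delta_{E',\min})=8$ is slightly more careful than the paper's (which asserts the type is $\chi^0\oplus\chi^1$ in both cases, whereas $v_7=8$ actually yields $\{\chi^3,\chi^4\}$; the conclusion is unaffected since this too differs from $\{\chi^2,\chi^5\}$).
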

\begin{proof}
    The curve $E'$ has bad, potentially good reduction at $7$ and its minimal discriminant has $7$-adic valuation $2$ or $8$. As in the proof of \Cref{prop: 392 2 a c}, this implies that the semisimplification of the restriction of $E'[7]$ to an inertia group at $7$ is isomorphic to $\chi^0 \oplus \chi^1$. On the other hand, the semisimplification of the restriction of $F[7]$ to an inertia group at $7$ is isomorphic to $\chi^{-1} \oplus \chi^{2}$ by \Cref{lemma: E7 inertia}. For the final statement, note that the mod-7 representations arising from $f_1, f_2, f_{4}, f_{5}$ coincide with the mod-7 representations of the elliptic curves $E'$ with LMFDB labels \href{https://www.lmfdb.org/EllipticCurve/Q/196/a/2}{196.a2}, \href{https://www.lmfdb.org/EllipticCurve/Q/196/b/2}{196.b2}, \href{https://www.lmfdb.org/EllipticCurve/Q/392/a/1}{392.a1}, \href{https://www.lmfdb.org/EllipticCurve/Q/392/f/1}{392.f1}, which satisfy the assumptions of the proposition.
\end{proof}

\begin{proposition}\label{prop: 784.2.a.l}
Let $(a,b,c)$ be a $(\star)$-solution of \Cref{eq: Cns} (resp.~\Cref{eq: C sharp}) and let $E := E_{(a,b,c)}$ (resp.~$E := F_{(a,b,c)}$) be the corresponding elliptic curve, as in \Cref{def: Eabc Fabc}. Let $f := f_{13}$.
Suppose that there exists a prime $\mathfrak{p}$ of $\mathcal{O}_f = \mathbb{Z}[\beta]/(\beta^2-2)$ with residue field $\mathbb{F}_7$ such that $\rho_{E, 7} \cong \rho_{f, \mathfrak{p}}$.
Then $E$ has complex multiplication and $c= \pm 1$.
\end{proposition}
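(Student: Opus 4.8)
The plan is to split the statement into two parts: first, that a curve $E = E_{(a,b,c)}$ or $F_{(a,b,c)}$ with complex multiplication automatically satisfies $c = \pm 1$; and second, that the hypothesis $\rho_{E,7} \cong \rho_{f_{13}, \mathfrak p}$ forces $E$ to have CM. The first part is a short computation: by \Cref{lemma: basic properties Eabc Fabc} the $j$-invariant of $E$ equals $2^8 \cdot 7 \cdot b^3/c^7$ (resp.\ $2^8 \cdot 7^2 \cdot b^3/c^7$), and condition $(\star)$ gives $(c,14b)=1$, so this is already a fraction in lowest terms with denominator $|c|^7$. An elliptic curve over $\Q$ with complex multiplication has integral $j$-invariant, hence $|c|^7 = 1$ and $c = \pm 1$ (equivalently $z = \pm 1$ in the notation of \Cref{prop: norm equations}). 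So the whole content is the implication ``$\rho_{E,7}\cong\rho_{f_{13},\mathfrak p}$ $\Rightarrow$ $E$ has CM''.

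For that implication, first note that $7$ splits in $\mathbb{Z}[\beta]/(\beta^2-2)$ (since $2$ is a square mod $7$), so there are exactly two primes $\mathfrak p_1,\mathfrak p_2$ above $7$ with residue field $\F_7$, interchanged by $\beta \mapsto -\beta$; hence $\rho_{f_{13},\mathfrak p_1}$ and $\rho_{f_{13},\mathfrak p_2}$ are Galois conjugate, have the same image up to conjugacy, and it suffices to treat $\mathfrak p := \mathfrak p_1$. I would then proceed as follows: (i) pin down $f_{13}$ completely using Sturm's bound \cite{Sturm87}, so that $\rho_{f_{13},\mathfrak p}$ is explicitly known through the reductions $a_\ell(f_{13}) \bmod \mathfrak p$; (ii) compute the image $H := \operatorname{Im}\rho_{f_{13},\mathfrak p} \le \GL_2(\F_7)$ by reducing enough Hecke eigenvalues modulo $\mathfrak p$ and invoking Chebotarev --- I expect $H$ to be conjugate to a subgroup of the normaliser of a Cartan subgroup of $\GL_2(\F_7)$ (equivalently, $\rho_{f_{13},\mathfrak p}$ to be of dihedral type, which is also consistent with $\rho_{f_{13},\mathfrak p}$ being irreducible by \Cref{lemma: E7 irreducible}, as it must be for the hypothesis to be satisfiable); (iii) since $\rho_{E,7}\cong\rho_{f_{13},\mathfrak p}$, the curve $E$ corresponds to a rational point on the level-$7$ modular curve $X_H$, whose non-CM rational points are described by the classification of level-$7$ images (\Cref{thm: rszb}, \cite{sutherlandzywina17, rszb22}) and, in the Cartan-normaliser case, by the explicit $j$-maps of \Cref{thm: j-invariants Zywina}; (iv) confront the finitely many $j$-invariants of non-CM points of $X_H$ with the constraint $j(E) = 2^8\cdot 7^{\varepsilon}\cdot b^3/c^7$, $\varepsilon\in\{1,2\}$, $(c,14b)=1$, and check that none of them is of this form --- so $E$ must be CM, and the first part of the proof then yields $c=\pm1$.

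The main obstacle is step (ii)--(iv): everything rests on the image of $\rho_{f_{13},\mathfrak p}$ being \emph{small}, i.e.\ of dihedral type, so that $X_H$ is a curve of genus $0$ or $1$ whose rational points are already understood and the confrontation in (iv) reduces to a Thue-type enumeration. If instead the image turned out to be all of $\GL_2(\F_7)$ (or a large index-$2$ subgroup), this direct route would fail and one would be forced to study the genus-$3$ twist of the Klein quartic attached to $\rho_{f_{13},\mathfrak p}$ by the Chabauty--Coleman techniques of \Cref{sect: genus 3}. Since the proposition asserts that $E$ \emph{must} have CM and explicitly allows such curves to exist, I expect the image computation to confirm that $H$ is of dihedral type, so that only the lighter level-$7$ argument is needed.
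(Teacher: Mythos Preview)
Your overall strategy aligns with the paper's: show that $\operatorname{Im}\rho_{f_{13},\mathfrak p}$ is too small to arise from a non-CM elliptic curve, then invoke Zywina's classification; and your reduction ``CM $\Rightarrow c=\pm 1$'' via integrality of $j(E)$ is exactly the paper's closing step. The genuine gap is in how the upper bound on the image is obtained. You propose to compute the image from Hecke eigenvalues via Chebotarev, but finitely many Frobenius traces only exhibit \emph{elements} of the image --- they give lower bounds, not the upper bound you need. To prove containment in a small $H$ one needs structural input, and the paper provides it: $f_{13}$ is the form attached to a $\Q$-curve $E'$ over $K=\Q(\sqrt{-7})$, so $\rho_{f_{13},\mathfrak p}^{\mathrm{ss}}|_{G_K}\cong\rho_{E'/K,7}^{\mathrm{ss}}$. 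Factoring the $7$-division polynomial of $E'$ over $K$ shows $E'[7]$ is reducible there with $[K(E'[7]):K]=42$, hence the semisimplified restriction has order dividing $6$. A dichotomy on whether $7$ divides $\#\operatorname{Im}\rho_{E,7}$ then either contradicts irreducibility over $K$ (since the image would be all of $\GL_2(\F_7)$, remaining irreducible on an index-$2$ subgroup), or forces $\#\operatorname{Im}\rho_{E,7}\mid 12$, i.e.\ index $\ge 168$ in $\GL_2(\F_7)$, which Zywina's list excludes for non-CM curves. This $\Q$-curve realisation is the missing ingredient in your sketch.

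A secondary issue is your phrasing of step~(iv). If $H$ were merely a full Cartan normaliser, $X_H\cong X_{ns}^+(7)$ or $X_{sp}^+(7)$ would be genus $0$ with infinitely many non-CM points, so no finite confrontation is available. What the argument actually yields is an image of order $\le 12$, strictly smaller than any Cartan normaliser and absent from Zywina's non-CM list altogether; the set of non-CM points on $X_H$ is then empty and step~(iv) is vacuous rather than a Thue-type enumeration.
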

\begin{proof}
    Consider the elliptic curve $E'/\mathbb{Q}(\sqrt{-7})$ with LMFDB label \href{https://www.lmfdb.org/EllipticCurve/2.0.7.1/12544.5/d/1}{12544.5-d1}. Let $K=\mathbb{Q}(\sqrt{-7})$. If $a \in K$ is a root of the polynomial $x^2-x+2$, a minimal equation for $E'$ is given by
    \[
    {y}^2={x}^{3}+\left(-a+1\right){x}^{2}+\left(37a-166\right){x}-148a+808.
    \]
    Given a representation $\rho$ with values in $\operatorname{GL}_2(\mathbb{F}_7)$, we denote by $\rho^{\operatorname{ss}}$ its semisimplification. We read from the LMFDB that $E'$ is $\mathbb{Q}$-curve with corresponding modular form $f_{13}$. In particular, we have the isomorphism of representations
    \[
    \rho_{f, \mathfrak{p}}^{\operatorname{ss}}|_{G_K} \cong \rho_{E'/K, 7}^{\operatorname{ss}},
    \]
    where $G_K := \Gal(\overline{K}/K)$.
    Factoring the 7-division polynomial of $E'$ over $K$ (as well as its resultant with the polynomial defining $E'$), we find that $E'[7]$ is reducible and $\#\rho_{E'/K, 7}(G_K) = [K(E'[7]) : K]=42$. Taking the semisimplification of $\rho_{E'/K, 7}$ then kills the elements of order 7, so we obtain
    \[
    \#\rho_{E'/K, 7}^{\operatorname{ss}}(G_K) \mid \frac{42}{7}=6.
    \]
    By assumption, the representation $\rho_{f, \mathfrak{p}} = \rho_{E,7}$, restricted to $G_K$, coincides (up to semisimplification) with $\rho_{E'/K, 7}$, which is reducible. 
    Set $G_\Q := \Gal(\overline{\Q}/\Q)$. There are two cases:
    \begin{itemize}
        \item Suppose that the order of $\rho_{E,7}(G_\mathbb{Q})$ is divisible by $7$. Since $E[7]$ is irreducible by \Cref{lemma: E7 irreducible}, the group $\rho_{E,7}(G_\mathbb{Q})$ is not contained in a Borel subgroup of $\operatorname{GL}_2(\mathbb{F}_7)$. Combining this fact with $7 \mid \#\rho_{E,7}(G_\mathbb{Q})$ and $\det \left(\rho_{E,7}(G_\mathbb{Q}) \right) = \mathbb{F}_7^\times$ and using the classification of the maximal subgroups of $\operatorname{GL}_2(\mathbb{F}_7)$, we obtain $\rho_{E,7}(G_\mathbb{Q}) = \operatorname{GL}_2(\mathbb{F}_7)$. Thus, $\rho_{E,7}(G_K)$ has index at most $2$ in $\operatorname{GL}_2(\mathbb{F}_7)$, and therefore acts irreducibly on $\mathbb{F}_7^2$, which contradicts the fact shown above that $\rho_{E'/K, 7}$ is reducible.
    
        \item Suppose that the order of $\rho_{E,7}(G_\mathbb{Q})$ is not divisible by $7$. Then $\rho_{E,7}$ is semisimple (even after restriction to the subgroup $G_K$), and irreducible by \Cref{lemma: E7 irreducible}. We then have
        \[
        \#\rho_{E,7}(G_K) = \#\rho_{E,7}^{\operatorname{ss}}(G_K) = \#\rho_{E'/K, 7}^{\operatorname{ss}}(G_K) \mid 6
        \]
        and therefore
        \[
        \#\rho_{E,7}(G_\Q) \mid 6 [K:\mathbb{Q}] = 12.
        \]
        In particular, $[\operatorname{GL}_2(\mathbb{F}_7) : \rho_{E,7}(G_\Q)]$ is divisible by $\frac{\#\operatorname{GL}_2(\mathbb{F}_7)}{12} =168$.
        However, \cite[Section 1.4]{zywina15} shows that if $E$ is non-CM, then the index $[\operatorname{GL}_2(\mathbb{F}_7) : \rho_{E,7}(G_\Q)]$ is not divisible by $168$. This contradiction shows as desired that $E$ is CM, hence that its $j$-invariant is an integer. The formula for the $j$-invariant of $E$ given in \Cref{lemma: basic properties Eabc Fabc}, combined with the fact that $(c, 42ab)=1$ (which holds because $(a,b,c)$ is a $(\star)$-solution), shows that $c^7 \mid 1$, hence $c = \pm 1$ as desired. \qedhere
    \end{itemize}
\end{proof}

\section{From elliptic curves with given $7$-torsion to rational points on genus-3 curves}\label{sect: ell curves to ratpts genus 3}

For every elliptic curve $E/\mathbb{Q}$ there is an affine modular curve $Y_E(7)$ whose $\Q$-rational points parametrise $\overline{\Q}$-isomorphism classes of pairs $(E', \varphi)$, where $E'$ is an elliptic curve over $\mathbb{Q}$ and $\varphi : E[7]\to E'[7]$ is a \textit{symplectic} isomorphism, that is, an isomorphism such that $\bigwedge ^2 \varphi : \bigwedge^2 E[7] \cong \mu_7 \to \bigwedge^2E'[7] \cong \mu_7$ is the identity (here the identification $\bigwedge^2 E[7] \cong \mu_7$ is given by the Weil pairing, and similarly for $E'$). This affine modular curve can be completed to a smooth projective curve over $\mathbb{Q}$ that we denote by $X_E(7)$. Similarly, there is an affine modular curve $Y_E^-(7)$ whose rational points parametrise pairs $(E', \varphi)$, where now $\varphi : E[7]\to E'[7]$ is an \textit{anti-symplectic} isomorphism, that is, an isomorphism such that $\bigwedge ^2 \varphi : \bigwedge^2 E[7] \cong \mu_7 \to \bigwedge^2E'[7] \cong \mu_7$ coincides with the map $x \mapsto x^{-1}$. This curve can also be completed to a smooth projective curve $X_E^{-}(7)$ over $\mathbb{Q}$. See also \cite[\S 4.4]{PSS07} for further details. For ease of notation, we will sometimes write $X_E^{+}(7)$ to denote $X_E(7)$, so that $X_E^{\pm}(7)$ denotes the pair of modular curves $\{X_E(7), X_E^-(7)\}$.

Our interest in this construction comes from the following lemma, which follows essentially tautologically from the above facts:
\begin{lemma}[{\cite[Lemma 4.4]{PSS07}}]\label{lemma: why we use XE7}
    Fix an elliptic curve $E/\Q$. Let $F/\Q$ be an elliptic curve such that $E[7] \cong F[7]$ as Galois modules (without symplectic structure). If $F'$ is a quadratic twist of $F$, then $j(F') \in X(1)(\Q)$ is in the image of $X_E(7)(\Q) \to X_1(\Q)$ or $X_E^-(7)(\Q) \to X_1(\Q)$.
\end{lemma}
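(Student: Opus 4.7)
The proof I would give rests on the observation that, since $F'$ is a quadratic twist of $F$, we have $j(F') = j(F)$. It therefore suffices to exhibit a $\Q$-rational point of $Y_E(7) \subset X_E(7)$ or of $Y_E^-(7) \subset X_E^-(7)$ mapping to $j(F)$ under the $j$-map, namely a pair $(F, \varphi)$ where $\varphi \colon E[7] \to F[7]$ is a Galois-equivariant $\F_7$-linear (anti-)symplectic isomorphism. Note that, since $j(F) \in \Q$ is finite, any such pair lies in the affine part $Y_E^{\pm}(7)$, so cuspidal issues do not arise.

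By hypothesis, some Galois-equivariant $\F_7$-linear isomorphism $\varphi_0 \colon E[7] \to F[7]$ exists. Its exterior square $\bigwedge^2 \varphi_0 \colon \mu_7 \to \mu_7$ is multiplication by a scalar $\lambda \in \F_7^\times$, characterised by $e_7^F(\varphi_0(P), \varphi_0(Q)) = e_7^E(P,Q)^\lambda$ for all $P, Q \in E[7]$. The plan is to rescale $\varphi_0$ to produce a symplectic or anti-symplectic isomorphism, i.e.~one with multiplier $1$ or $-1$. Replacing $\varphi_0$ by $c \varphi_0$ for $c \in \F_7^\times$ changes $\lambda$ to $c^2 \lambda$, by bilinearity of the Weil pairing, so the set of attainable multipliers is the coset $\lambda \cdot (\F_7^\times)^2 \subseteq \F_7^\times$.

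The decisive arithmetic input is that the squares in $\F_7^\times$ are exactly $\{1, 2, 4\}$, so $-1 \equiv 6 \pmod 7$ is a non-square and $\F_7^\times / (\F_7^\times)^2 = \{\overline{1}, \overline{-1}\}$. Hence exactly one of $\lambda$ and $-\lambda$ is a square in $\F_7^\times$, and we can therefore choose $c \in \F_7^\times$ so that $c^2 \lambda \in \{1, -1\}$. The rescaled isomorphism $\varphi := c \varphi_0$ is then symplectic or anti-symplectic depending on which case occurs, and the pair $(F, \varphi)$ defines a $\Q$-rational point of $Y_E(7)$ or $Y_E^-(7)$ whose image under the $j$-map is $j(F) = j(F')$. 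There is no genuine obstacle: the whole argument reduces to the pleasant coincidence that, modulo $7$, the two square classes in $\F_7^\times$ are already represented by $\pm 1$.
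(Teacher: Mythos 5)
Your proof is correct and is precisely the standard argument behind \cite[Lemma 4.4]{PSS07}, which the paper simply cites without reproving: rescale a Galois-module isomorphism $\varphi_0$ so that its Weil-pairing multiplier lands in $\{1,-1\}$, which is possible exactly because $\F_7^\times/(\F_7^\times)^2$ is represented by $\pm 1$, and note that $j$ is a quadratic-twist invariant. No gaps.
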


Our analysis in the previous sections shows that, if $(a,b,c)$ is a $(\star)$-solution to \Cref{eq: Cns} or \eqref{eq: C sharp}, then the mod-7 representation of the corresponding elliptic curve $E_{(a,b,c)}$ or $F_{(a,b,c)}$ belongs to a short list of isomorphism classes, each of which is also the mod-7 representation of an elliptic curve over $\Q$. This will allow us to show that $j(E_{(a,b,c)})$, respectively $j(F_{(a,b,c)})$, lies in the image of finitely many maps $X_{E_i}(7)(\Q) \to X_1(\Q)$, where the $E_i/\Q$ are explicit elliptic curves. Specifically, we let 
\begin{itemize}
    \item $E_1 : y^2=x^3-x^2-2x+1 / \mathbb{Q}$ be the elliptic curve with LMFDB label \href{https://www.lmfdb.org/EllipticCurve/Q/196/a/2}{196.a2}. It corresponds to the modular form $f_1$ with LMFDB label \href{https://www.lmfdb.org/ModularForm/GL2/Q/holomorphic/196/2/a/a/}{196.2.a.a}.
    \item $E_2 : y^2 = x^3-7x+7/\mathbb{Q}$ be the elliptic curve with LMFDB label \href{https://www.lmfdb.org/EllipticCurve/Q/392/a/1}{392.a1}. It corresponds to the modular form $f_4$ with LMFDB label \href{https://www.lmfdb.org/ModularForm/GL2/Q/holomorphic/392/2/a/a/}{392.2.a.a}.
    \item $E_3 : y^2=x^3-x^2-16x+29$ be the elliptic curve with LMFDB label \href{https://www.lmfdb.org/EllipticCurve/Q/392/c/1}{392.c1}. It corresponds to the modular form $f_7$ with LMFDB label \href{https://www.lmfdb.org/ModularForm/GL2/Q/holomorphic/392/2/a/c/}{392.2.a.c}.
\end{itemize}

\begin{proposition}\label{prop: solutions Fermat and rational points}
    For every $(\star)$-solution $(a,b,c)$ of the equation $a^2+28b^3=27c^7$ (respectively, for every $(\star)$-solution $(a,b,c)$ of the equation $a^2+196b^3=27c^7$), at least one of the following holds:
    \begin{enumerate}
        \item $c=\pm 1$;
        \item there is a quadratic twist $E^{\chi}$ of $E_{(a,b,c)}$ (respectively, a quadratic twist $F^{\chi}$ of $F_{(a,b,c)}$) and an index $i \in \{1,2\}$ such that $E^\chi[7] \cong E_i[7]$ as Galois representations (respectively, $F^{\chi}[7] \cong E_i[7]$ for some $i \in \{1,2,3\}$). In particular, $j(E_{(a,b,c)})$ lies in the set
        \[
        \{ j(E) : [E] \in X_{E_i}^{\varepsilon}(\mathbb{Q}) \bigm\vert i \in \{1,2\}, \varepsilon \in \{+, -\} \},
        \]
        and $j(F_{(a,b,c)})$ lies in
        \[
        \{ j(E) : [E] \in X_{E_i}^{\varepsilon}(\mathbb{Q}) \bigm\vert i \in \{1,2,3\}, \; \varepsilon \in \{+, -\} \}.
        \]
    \end{enumerate}
\end{proposition}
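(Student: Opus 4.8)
The plan is to feed the modularity and level-lowering input of \Cref{prop: level lowering} into the exclusion criteria of the previous subsection, and then to invoke \Cref{lemma: why we use XE7}. Write $E := E_{(a,b,c)}$ or $E := F_{(a,b,c)}$ according to the equation under consideration, and assume that alternative (1) fails, i.e.\ $c \neq \pm 1$; I need to produce a quadratic twist $E^{\chi}$ of $E$ together with an admissible index $i$ such that $E^{\chi}[7] \cong E_i[7]$ as Galois modules, after which alternative (2) follows formally. First I would apply \Cref{prop: level lowering}: the representation $\rho_{E,7}$ arises from a weight-$2$ newform $f$ of level $\Gamma_0(N')$ with $N' \in \{196, 392, 784\}$ and from a prime $\lambda \mid 7$ of $\mathcal{O}_f$ with residue field $\F_7$. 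Since \Cref{prop: forms of level 196,prop: forms of level 392,prop: forms of level 784} enumerate all such newforms, $\rho_{E,7}$ agrees up to semisimplification with $\rho_{f_j, \lambda}$ for one of the tabulated orbits $f_j$; and since $\rho_{E,7}$ is irreducible by \Cref{lemma: E7 irreducible}, this congruence of Frobenius traces upgrades to an actual isomorphism $\rho_{E,7} \cong \rho_{f_j, \lambda}$ of Galois modules.

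The core of the argument is a sweep through the tables. The orbits $f_{10}, f_{11}, f_{12}, f_{21}, f_{22}$ are eliminated by \Cref{prop: 392 2 a b and d} (potentially multiplicative reduction at $7$), the orbit $f_{14}$ by \Cref{prop: 784.2.a.f} (reducibility), and if $f_j = f_{13}$ then \Cref{prop: 784.2.a.l} forces $c = \pm 1$, contradicting our assumption. For $E = E_{(a,b,c)}$ (equation \eqref{eq: Cns}), \Cref{prop: 392 2 a c} further eliminates $f_7, f_8, f_{23}, f_{24}$, hence also $f_9$ and $f_{25}$ by the congruences recorded in \Cref{prop: forms of level 392,prop: forms of level 784}; the surviving orbits are $f_1, f_2, f_3$ (level $196$), $f_4, f_5, f_6$ (level $392$), and $f_{15}, \dots, f_{20}$ (level $784$), and by the quadratic-twist relations in \Cref{prop: forms of level 196,prop: forms of level 392,prop: forms of level 784} each of the associated mod-$7$ representations is a quadratic twist of $\rho_{E_1, 7}$ or of $\rho_{E_2, 7}$. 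For $E = F_{(a,b,c)}$ (equation \eqref{eq: C sharp}), \Cref{prop: F cannot arise from f1 f2 f3 f4 f5 f6} eliminates $f_1, f_2, f_4, f_5$ (hence $f_3, f_6$), and the orbits $f_{15}, \dots, f_{20}$ are excluded because, being quadratic twists of $\rho_{E_1, 7}$ or $\rho_{E_2, 7}$, their restrictions to an inertia group at $7$ are incompatible with the one for $\rho_{F, 7}$ given by \Cref{lemma: E7 inertia}; the only survivors are then $f_7, f_8, f_9$ (level $392$) and $f_{23}, f_{24}, f_{25}$ (level $784$), each of whose mod-$7$ representation is a quadratic twist of $\rho_{E_3, 7}$.

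In all cases this shows that $\rho_{E,7}$ is a quadratic twist of $\rho_{E_i, 7}$ for some $i \in \{1, 2\}$ (resp.~for $i = 3$, hence certainly $i \in \{1,2,3\}$). A quadratic twist of the mod-$7$ representation of an elliptic curve over $\Q$ is realised by a quadratic twist of the curve itself, so there is a quadratic character $\chi$ with $\rho_{E^{\chi}, 7} = \rho_{E, 7} \otimes \chi \cong \rho_{E_i, 7}$, i.e.\ $E^{\chi}[7] \cong E_i[7]$ as Galois modules. I would then apply \Cref{lemma: why we use XE7} with $E_i$ playing the role of ``$E$'' and $E^{\chi}$, regarded as its own trivial quadratic twist, playing the role of ``$F = F'$''; since $j(E^{\chi}) = j(E_{(a,b,c)})$ (resp.~$= j(F_{(a,b,c)})$), this gives that the relevant $j$-invariant lies in the image of $X_{E_i}(7)(\Q) \to X(1)(\Q)$ or of $X_{E_i}^{-}(7)(\Q) \to X(1)(\Q)$, which is exactly the set displayed in alternative (2).

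The one step requiring real care is the exhaustive sweep over the twenty-five Galois orbits of newforms in the three tables: for each orbit one has to check that it is either destroyed by one of the criteria of the previous subsection, or lands in the CM branch $c = \pm 1$ via \Cref{prop: 784.2.a.l}, or has mod-$7$ reduction equal to a quadratic twist of $\rho_{E_1, 7}$, $\rho_{E_2, 7}$, or $\rho_{E_3, 7}$. The only conceptual ingredient, namely upgrading a congruence of Hecke eigenvalues to an isomorphism of Galois representations, is harmless here because every representation involved is irreducible (\Cref{lemma: E7 irreducible}).
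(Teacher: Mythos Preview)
Your proof is correct and follows the same overall approach as the paper: apply \Cref{prop: level lowering}, sweep through the newforms in Tables~\ref{table: level 196}--\ref{table: level 784}, eliminate forms using the criteria of Propositions~\ref{prop: 392 2 a b and d}--\ref{prop: 784.2.a.l}, reduce the survivors to quadratic twists of $\rho_{E_1,7}$, $\rho_{E_2,7}$, or $\rho_{E_3,7}$, and finish with \Cref{lemma: why we use XE7}.

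There is one genuine difference in the $F_{(a,b,c)}$ case. The paper does \emph{not} exclude $f_{15},\ldots,f_{20}$: it explicitly notes that ``in the case of $F_{(a,b,c)}$ we cannot rule out the modular forms $f_{15}, f_{16}, f_{18}, f_{19}$'', and this is precisely why the statement allows $i \in \{1,2,3\}$ rather than just $i=3$. You go further and exclude these forms via an inertia-at-$7$ argument, obtaining the sharper conclusion $i=3$. Your extra step is in fact correct: since $E_1,E_2$ have $v_7(\Delta_{\min})\in\{2,8\}$, one has $\rho_{E_i,7}|_{I_7}^{\mathrm{ss}}=\chi^0\oplus\chi^1$, and any quadratic twist restricts on $I_7$ to either $\chi^0\oplus\chi^1$ or $\chi^3\oplus\chi^4$; neither equals $\chi^{-1}\oplus\chi^2$, which is what \Cref{lemma: E7 inertia} gives for $F$. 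This verification is not carried out in the paper, so your proposal as written leans on a computation that you should make explicit (either by checking $v_7(\Delta_{\min})$ for the level-$784$ curves, or by the twist argument just sketched). For the purpose of proving the proposition as stated, though, this refinement is unnecessary: it suffices, as the paper does, to observe that the surviving forms are all quadratic twists of one of $\rho_{E_1,7},\rho_{E_2,7},\rho_{E_3,7}$ and land in $\{1,2,3\}$.
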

\begin{proof}
Consider first the case of \Cref{eq: Cns} and let $E := E_{(a,b,c)}$. By \Cref{prop: level lowering} there are an integer $N \in \{2^2 \cdot 7^2, 2^3 \cdot 7^2, 2^4 \cdot 7^2\}$, a weight $2$ eigenform $f$ of level $\Gamma_0(N)$, and a prime $\mathfrak{p}$ of the ring $\mathbb{Z}[a_n(f)]$ of norm $7$ such that $\rho_{E,7} \sim \rho_{f, \mathfrak{p}}$. By Propositions \ref{prop: forms of level 196}, \ref{prop: forms of level 392} and \ref{prop: forms of level 784} we know that $f=f_i$ for some $i \in \{1, \ldots, 25\}$. By the same propositions, up to isomorphism of representations, we can replace $f_3, f_6, f_9, f_{17}, f_{20}, f_{25}$ with a different eigenform (at the same level) with rational coefficients, and therefore assume $i \not \in \{3, 6, 9, 17, 20, 25\}$.
\begin{enumerate}
    \item If $i=1$ or $2$, by \Cref{prop: forms of level 196} the representation $\rho_{E, 7}$ is a quadratic twist of $\rho_{f_1,7}$. In particular, there exists a quadratic twist $E^\chi$ of $E$ such that $\rho_{E^\chi,7} \sim \rho_{f_1, 7}$. By a similar argument using \Cref{prop: forms of level 392}, if $i=4$ or $5$ there is a quadratic twist $E^\chi$ of $E$ such that $\rho_{E^\chi, 7} \sim \rho_{f_4, 7}$.
    \item The cases $i=7, 8, 23, 24$ are impossible by \Cref{prop: 392 2 a c}.
    \item The cases $i=10, 11, 12, 21, 22$ are impossible by \Cref{prop: 392 2 a b and d}.
    \item If $c \neq \pm 1$, the case $i=13$ is impossible by \Cref{prop: 784.2.a.l}.
    \item The case $i=14$ is impossible by \Cref{prop: 784.2.a.f}.
    \item In cases $i= 15, 16$ (resp.~$i=18, 19$), \Cref{prop: forms of level 784} shows that a quadratic twist of $\rho_{f_i, 7}$ is isomorphic to $\rho_{f_1, 7}$ (resp.~$\rho_{f_4, 7}$), hence there is a quadratic twist $E^\chi$ of $E$ such that $\rho_{E^\chi, 7} \sim \rho_{f_1, 7}$ (resp.~$\rho_{E^\chi, 7} \sim \rho_{f_4, 7}$).
\end{enumerate}
This shows that at least one among (1) and (2) holds. The final statement of (2) follows from \Cref{lemma: why we use XE7}. We analyse $(\star)$-solutions of \Cref{eq: C sharp} in a similar way. Note that in the case of $F_{(a,b,c)}$ we cannot rule out the modular forms $f_{15}, f_{16}, f_{18}, f_{19}$, which means that in principle $F$ could have a quadratic twist $F^\chi$ with $\rho_{F^\chi, 7} \sim \rho_{f_1, 7}$ or $\rho_{F^\chi, 7} \sim \rho_{f_4, 7}$.
\end{proof}

We are thus reduced to computing the rational points on the six curves $X_{E_1}, X_{E_1}^-, X_{E_2}, X_{E_2}^-, X_{E_3}, X_{E_3}^-$. Two of these have no rational points for local reasons:
\begin{proposition}\label{prop: no local points}
    We have $X_{E_2}^-(\mathbb{Q})=X_{E_3}^-(\mathbb{Q})=\emptyset$.
\end{proposition}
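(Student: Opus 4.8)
The plan is to show that each of the curves $X_{E_2}^-(7)$ and $X_{E_3}^-(7)$ fails to have points over a suitable completion $\Q_v$, which immediately implies $X_{E_i}^-(7)(\Q)=\emptyset$. These are smooth plane quartics (twists of the Klein quartic), and the natural place to look for a local obstruction is at a small prime of bad or interesting reduction — most plausibly $v=2$, $v=7$, or $v=\infty$ — since the twisting cocycle defining $X_E^-(7)$ is built from the mod-$7$ representation $\rho_{E,7}$, which for $E_2$ and $E_3$ has additive reduction at $7$ (and these curves have conductor divisible by $2^3$ or $2^4$), so the twists should be ramified there.

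First I would write down explicit equations for $X_{E_2}^-(7)$ and $X_{E_3}^-(7)$. There are (at least) two routes: use the classical formulas of Halberstadt--Kraus / Poonen--Schaefer--Stoll (see \cite[\S4]{PSS07}) expressing the twists $X_E^{\pm}(7)$ of the Klein quartic as explicit plane quartics whose coefficients are universal polynomials in the $a$-invariants (or in $c_4,c_6$) of $E$; or, since $E_2$ and $E_3$ are specified by small Weierstrass equations, simply substitute. Having an explicit model, the next step is to search for a prime $v$ at which the quartic has no $\Q_v$-point. For the real place this is a matter of checking that the quartic form is (positive or negative) definite, which one can read off from the equation or verify numerically. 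For a finite prime $v$, one reduces the (integral, primitive) quartic modulo $v$ and checks — by exhaustive enumeration over $\mathbb{P}^2(\F_{v^k})$ for small $k$, or by a Hensel-type argument — that there is no smooth point, and then rules out points reducing into the singular locus by a short analysis of the next digit. For $v=2$ this is a finite check over $\mathbb{P}^2(\Z/2^m\Z)$ for $m$ small (typically $m\le 3$ suffices for a quartic), and for $v=7$ similarly over $\mathbb{P}^2(\Z/7^m\Z)$; in practice MAGMA's \texttt{IsLocallySolvable} (or a direct point count) settles this instantly, and indeed the paper's computational appendix presumably records exactly such a verification.

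The key structural point that makes a local obstruction plausible — and which I would invoke to guide the search — is the inertia behaviour established in \Cref{lemma: E7 inertia}: the curves $E_{(a,b,c)}$ coming from $(\star)$-solutions have $\rho_{E,7}|_{I_7}^{\mathrm{ss}} = \chi^0\oplus\chi^1$, whereas $E_3$ (conductor $392$, modular form $f_7$) has $v_7(\Delta_{\min})\in\{4,10\}$, so its mod-$7$ inertia type at $7$ is $\chi^2\oplus\chi^{-1}$ (this is the content of \Cref{prop: 392 2 a c}); the \emph{anti}-symplectic twist $X_{E_3}^-(7)$ is built from the representation $\sigma\mapsto\rho_{E_3,7}(\sigma)$ composed with an anti-symplectic normalisation, and one expects its $7$-adic (or $2$-adic) fibre to be anisotropic. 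So the concrete plan is: (i) produce the explicit quartic models; (ii) for $X_{E_2}^-(7)$, identify the obstructing place — I would guess $v=2$ or $v=7$ — and verify $X_{E_2}^-(7)(\Q_v)=\emptyset$ by a finite mod-$v^m$ computation; (iii) do the same for $X_{E_3}^-(7)$, again at $v\in\{2,7,\infty\}$.

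The main obstacle is not conceptual but computational/bookkeeping: getting the twisted quartic models exactly right (the universal formulas for the $(-)$-twist of $X(7)$ are intricate, and an error in a coefficient would invalidate the local check), and then correctly handling points that reduce into the singular locus modulo the relevant prime, which requires lifting the computation one or two digits further. I would mitigate this by cross-checking the models in two ways — for instance verifying that $X_{E_i}^{+}(7)$ carries the expected rational point corresponding to $E_i$ itself, and that the Jacobian of the model has the predicted conductor — before trusting the local-solvability verdict on the $(-)$-twists.
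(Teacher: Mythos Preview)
Your approach---writing down the explicit plane-quartic model for $X_{E_i}^-(7)$ and checking local solvability by a finite computation---is correct and would work; indeed, the paper itself remarks (immediately after its proof) that one can verify computationally that $X_{E_2}^-$ and $X_{E_3}^-$ have no $\Q_2$-points, so the obstructing place is $v=2$, one of your candidates. However, the paper's own argument takes a more conceptual route: rather than touching the quartic model at all, it uses a symplectic criterion. From Kraus's description of wild inertia one computes $\rho_{E_i,7}(I_2)\cong\SL_2(\F_3)$ for $i=2,3$, and then a theorem of Freitas (\cite[Theorem~4]{MR3493372}) asserts that whenever the mod-$7$ inertia image at $2$ is $\SL_2(\F_3)$, no anti-symplectic isomorphism $E[7]\cong E_i[7]$ can exist for any $E/\Q$; hence $X_{E_i}^-(\Q)=\emptyset$ directly from the moduli interpretation. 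This buys a clean structural explanation and avoids the bookkeeping you (rightly) flag as the main hazard in your plan. One small misdirection in your write-up: the $\chi^0\oplus\chi^1$ versus $\chi^2\oplus\chi^{-1}$ mismatch at $7$ that you invoke is what rules out certain modular forms for $E_{(a,b,c)}$ (this is \Cref{prop: 392 2 a c}), not what produces the local obstruction on $X_{E_3}^-$; the relevant inertia here is at $2$, not $7$.
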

\begin{proof}
Let $I_2$ be an inertia group at $2$.
Using results of Kraus \cite{KrausDefautSemistabilite} (see also \cite{CoppolaWild2adic}), we find that $\rho_{E_2, 7}(I_2) \cong \operatorname{SL}_2(\mathbb{F}_3)$. It follows from \cite[Theorem 4]{MR3493372} that there is no anti-symplectic isomorphism $E[7] \cong E_2[7]$ for any elliptic curve $E$ over $\mathbb{Q}$. Since $X_{E_2}^-$ parametrises anti-symplectic isomorphisms $E[7] \cong E_2[7]$, we conclude that $X_{E_2}^-(\mathbb{Q})=\emptyset$. The same exact argument applies to $X_{E_3}^-$.
\end{proof}
\begin{remark}
    Let $p$ be a prime and $E/\mathbb{Q}$ be an elliptic curve.
    The local obstructions at $p$ to the existence of an anti-symplectic isomorphism $E[7] \cong E'[7]$ correspond to the non-existence of $\mathbb{Q}_p$-points on the modular curve $X_E^-$. The previous proposition exploits such an obstruction at $p=2$, and indeed, it can also be proved computationally by checking that $X_{E_2}^-$ and $X_{E_3}^-$ have no $\mathbb{Q}_2$-points.
\end{remark}

\begin{remark}
    We note that $E_1$ is 3-isogenous to the elliptic curve $E_4$ with LMFDB label \href{https://www.lmfdb.org/EllipticCurve/Q/196/a/1}{196.a1}. Since $3$ is a non-square modulo 7, this isogeny gives an anti-symplectic isomorphism $\varphi : E_1[7] \cong E_4[7]$. Thus, given an  elliptic curve $E' / \mathbb{Q}$ and an anti-symplectic isomorphism $\psi : E'[7] \to E_1[7]$, by composing with $\varphi$ we obtain a symplectic isomorphism $\varphi\psi : E'[7] \cong E_4[7]$. In particular, $X_{E_1}^-$ is isomorphic over $\mathbb{Q}$ to $X_{E_4}$. This can also be checked directly using the formulas in the next proposition.
\end{remark}

\begin{proposition}
    Let $E$ be an elliptic curve with equation $y^2 = x^3 + ax +b$. An explicit model for the curve $X_E(7)$ is given by
    $$ax^4 + 7bx^3z + 3x^2y^2 - 3a^2x^2z^2 - 6bxyz^2 - 5abxz^3 + 2y^3z + 3ay^2z^2 + 2a^2yz^3 - 4b^2z^4 = 0.$$
    An explicit model for the curve $X_E^-(7)$ is given by
    \begin{gather*}
        -a^2x^4 + a(3a^3 +19b^2)y^4 + 3z^4 + 6a^2y^2z^2 + 6az^2x^2 - 6(a^3 +6b^2)x^2y^2 - 12aby^2zx \\ + 18bz^2xy + 2abx^3y - 12bx^3z - 2(4a^3 + 21b^2)y^3z + 2a^2by^3x - 8az^3y = 0.
    \end{gather*}
\end{proposition}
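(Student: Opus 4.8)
The plan is to recognise the two ternary quartics in the statement as the two $\mathbb{Q}$-forms of the modular curve $X(7)$ cut out by $\rho_{E,7}$, and to derive (or, more cheaply, verify) their equations from the action of $\operatorname{GL}_2(\mathbb{F}_7)$ on the holomorphic differentials of $X(7)$. Recall that over $\overline{\mathbb{Q}}$ the curve $X(7)$ is isomorphic to the Klein quartic $\mathcal{K}\colon x^3y+y^3z+z^3x=0$, that $\operatorname{Aut}_{\overline{\mathbb{Q}}}(\mathcal{K})=\operatorname{PSL}_2(\mathbb{F}_7)$, and that the canonical embedding realises $\mathcal{K}$ in $\mathbb{P}(H^0(X(7),\Omega^1)^\vee)$, where $H^0(X(7),\Omega^1)$ is, up to a Tate twist, one of the two $3$-dimensional irreducible representations of $\operatorname{PSL}_2(\mathbb{F}_7)$ — the two being interchanged by the outer automorphism of $\operatorname{PSL}_2(\mathbb{F}_7)$, induced by conjugation by an element of $\operatorname{GL}_2(\mathbb{F}_7)$ of non-square determinant. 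In particular $H^0(X_E(7),\Omega^1)$ is obtained from the Galois module $E[7]$, with its Weil pairing, by an explicit representation-theoretic recipe; call $W_E$ the resulting module, and let $W_E^-$ be the analogous module attached to $E[7]$ with the Weil pairing composed with inversion. Then, by the definition of a twist, $X_E(7)$ (resp.\ $X_E^-(7)$) sits canonically in $\mathbb{P}(W_E^\vee)$ (resp.\ $\mathbb{P}((W_E^-)^\vee)$) as the zero locus of the unique, up to scaling, $\operatorname{PSL}_2(\mathbb{F}_7)$-invariant quartic form, namely the image of the defining form of $\mathcal{K}$.

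\textbf{Deriving / verifying the equations.} The most direct route is to derive the equations: using the $\operatorname{GL}_2(\mathbb{F}_7)$-action on the $7$-division polynomials and the Weil pairing, one writes down an explicit $\mathbb{Q}$-basis of $W_E$ (resp.\ $W_E^-$) whose entries are functions of $a,b$, and then computes the unique $\operatorname{PSL}_2(\mathbb{F}_7)$-invariant quartic form in the dual coordinates; this produces a quartic with coefficients in $\mathbb{Q}[a,b]$ which, by the previous paragraph, is a model of $X_E(7)$ (resp.\ $X_E^-(7)$). A self-contained alternative is to \emph{verify} the stated formulas: working over the function field $\mathbb{Q}(a,b)$ of the generic elliptic curve $y^2=x^3+ax+b$, first check that each quartic $Q_E$ in the statement is a $\overline{\mathbb{Q}}$-form of $\mathcal{K}$ — for instance by exhibiting one matrix $M\in\operatorname{GL}_3$ over a finite extension carrying $Q_E$ to $\mathcal{K}$, or by comparing Dixmier--Ohno invariants as points of the relevant weighted projective space — and then identify the twist (see below). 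A direct computation shows that the discriminant of either quartic is a unit times a power of $4a^3+27b^2$, so the model is a smooth curve of genus $3$ for every elliptic curve $E/\mathbb{Q}$ and specialises correctly from the generic fibre.

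\textbf{Identifying the twist.} Fix an isomorphism $M\colon Q_E\to\mathcal{K}$ over $\overline{\mathbb{Q}(a,b)}$ as above; then $\sigma\mapsto c_\sigma:=M^\sigma M^{-1}\in\operatorname{Aut}_{\overline{\mathbb{Q}}}(\mathcal{K})=\operatorname{PSL}_2(\mathbb{F}_7)$ is a cocycle, and one checks that its class coincides with the one attached to $\bar\rho_{E,7}$ in the normalisation giving symplectic (resp.\ anti-symplectic) isomorphisms — concretely, by comparing the Galois action on $W_E$ with that on $H^0(\mathcal{K},\Omega^1)$ via $M$. As a corroboration, one can specialise to a few explicit curves $E/\mathbb{Q}$ with independently computable mod-$7$ image and compare $j$-line maps. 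The occurrence of \emph{two} distinct quartics reflects the choice between $W_E$ and $W_E^-$, i.e.\ between symplectic and anti-symplectic $7$-congruences; this is governed by the Weil pairing together with the character $\det\rho_{E,7}$, which is the mod-$7$ cyclotomic character and, modulo squares, the quadratic character of $\mathbb{Q}(\sqrt{-7})$ — this is why $\sqrt{-7}$ intervenes in separating the two twists. Which quartic is $X_E(7)$ and which is $X_E^-(7)$ is pinned down by a single Weil-pairing computation on a point defined over a small extension.

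\textbf{Main obstacle.} I expect the main difficulty to be the explicit representation-theoretic bookkeeping underlying both routes: making the functor $E[7]\rightsquigarrow H^0(X(7),\Omega^1)$ completely explicit in coordinates, as a function of $a$ and $b$, so that the essentially unique invariant quartic can be written down and matched to the stated formulas. For $X_E(7)$ this is precisely the computation carried out by Halberstadt and Kraus; the anti-symplectic model $X_E^-(7)$ follows from it together with the outer-twist data described above, and also appears in work of Fisher and in \cite[\S4.4]{PSS07}. In any case, a fully self-contained verification of the two formulas reduces to the finite checks above — a $\operatorname{GL}_3$-equivalence to $\mathcal{K}$ (or a Dixmier--Ohno comparison), a discriminant computation, a cocycle identification, and a Weil-pairing sign check — all of which can be carried out by machine.
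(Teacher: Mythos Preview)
Your proposal is correct in substance and, in fact, goes well beyond what the paper does: the paper's entire proof is a bare citation to \cite[Th\'eor\`eme 2.1]{kraus03} for $X_E(7)$ and \cite[equation (7.6)]{PSS07} for $X_E^-(7)$, with no argument given. Your sketch essentially reconstructs what those references do --- realising the twists via the Galois action on $H^0(X(7),\Omega^1)$ and writing down the unique invariant quartic --- and you even name the right sources (Halberstadt--Kraus and \cite{PSS07}). So there is no gap; if anything, you have supplied the content that the paper outsources.
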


\begin{proof}
See \cite[Th\'eor\`eme 2.1]{kraus03} and \cite[equation (7.6)]{PSS07}.
\end{proof}

\begin{corollary}
    Equations for $X_E(7)$ for $E=E_1, E_2, E_3, E_4$ are
    \begin{align*}
        X_{E_1}(7) &: -x^4 - 4x^3y + x^3z + 3x^2y^2 - 12x^2yz - 12x^2z^2 + 6xy^2z - 6xyz^2 - 8xz^3 - 2y^3z - 4yz^3 + 4z^4 = 0 \\
        X_{E_2}(7) &: 4x^4 - 2x^3y - 12x^3z + 3x^2z^2 - 2xy^3 - 6xyz^2 + 7xz^3 + 2y^3z + 3y^2z^2 + 2yz^3 - 6z^4 = 0 \\
        X_{E_3}(7) &: x^4 + 3x^3y - 3x^2yz - 3x^2z^2 + 6xy^3 - 6xy^2z + 3xyz^2 - 2xz^3 + 4y^4 + 2y^3z - 5yz^3 = 0 \\
        X_{E_4}(7) &: -12x^4 - 6x^3y - 7x^3z + 3x^2y^2 + 6x^2yz - 6x^2z^2 + 4xy^3 - 6xy^2z + 3xz^3 + 2y^3z - 3y^2z^2 + 2yz^3 = 0,
    \end{align*}
    where $X_{E_4}(7) \cong X_{E_1}^-(7)$.
\end{corollary}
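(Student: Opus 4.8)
The plan is a direct substitution followed by a normalising change of coordinates. For each $i \in \{1,2,3,4\}$ I would first fix a short Weierstrass model $y^2 = x^3 + a_i x + b_i$ of $E_i$: for $E_2$ this is the model displayed above, with $(a_2,b_2) = (-7,7)$, while for the others one runs the usual reduction on the given Weierstrass equations (for instance $E_1$ becomes $y^2 = x^3 - 189x + 189$). Substituting $(a,b) = (a_i,b_i)$ into the plane quartic model of $X_E(7)$ from the preceding proposition (Kraus, \cite[Théorème 2.1]{kraus03}) produces an explicit ternary quartic form over $\Q$, which after clearing denominators defines a smooth plane curve. Since the isomorphism class of $X_{E_i}(7)$ depends only on $E_i$ up to $\Q$-isomorphism, the choice of short model is immaterial. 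The quartics obtained directly from Kraus's formula have large coefficients, so the remaining task is to exhibit, for each $i$, an element of $\GL_3(\Q)$ carrying the raw quartic to the compact equation in the statement; in practice this is done by asking MAGMA to match the two smooth plane quartics. This is a finite and purely mechanical verification, and the relevant transformation matrices are recorded in the accompanying scripts.

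For the last assertion, $X_{E_4}(7) \cong X_{E_1}^-(7)$, I would argue as in the remark preceding that proposition. The curve $E_1$ is $3$-isogenous to $E_4$, and since a degree-$d$ isogeny multiplies the Weil pairing by $d$ and $3$ is a non-square modulo $7$, the induced isomorphism $E_1[7] \cong E_4[7]$ is anti-symplectic. Composing any anti-symplectic isomorphism $E'[7] \cong E_1[7]$ with it yields a symplectic isomorphism $E'[7] \cong E_4[7]$, hence a $\Q$-isomorphism of moduli problems $X_{E_1}^-(7) \xrightarrow{\sim} X_{E_4}(7)$, and therefore of the two curves. Alternatively, one can substitute $(a_1,b_1)$ into the second (anti-symplectic) model of the proposition and check directly that the resulting quartic is $\GL_3(\Q)$-equivalent to the displayed equation for $X_{E_4}(7)$; carrying out both computations provides a useful consistency check.

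I do not expect a conceptual obstacle here; the only mildly delicate point is bookkeeping. One must keep track of the change-of-variables matrices used to pass from Kraus's raw output to the normalised equations, and use a consistent normalisation throughout, so that the identification $X_{E_4}(7) \cong X_{E_1}^-(7)$ is compatible with the chosen models. This matters because these are precisely the plane quartics whose rational points are analysed in \Cref{sect: genus 3}, so the explicit equations — and not merely their isomorphism classes — will be needed later.
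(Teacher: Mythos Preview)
Your proposal is correct and matches the paper's approach: the corollary is stated without proof, the equations being obtained by substituting the short Weierstrass coefficients of each $E_i$ into Kraus's formula and then applying a $\GL_3(\Q)$-change of coordinates (recorded in the accompanying scripts). The isomorphism $X_{E_4}(7) \cong X_{E_1}^-(7)$ is established exactly as you describe, via the $3$-isogeny argument, which the paper gives in the remark immediately preceding the proposition; the paper also notes, as you do, that one can verify it directly from the formulas.
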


\section{Rational points on the curves $X_{E_i}$}\label{sect: genus 3}

As explained at the end of the previous section, we have reduced our problem to the following statement.

\begin{theorem}\label{thm: rational points}
    The rational points on $X_{E_1}, X_{E_2}, X_{E_4}$ are
    \begin{align*}
        X_{E_1}(\mathbb{Q}) &= \{ [0 : 1 : 0]\} \\
        X_{E_2}(\mathbb{Q}) &= \{ [0 : 1 : 0], \, [1 : 1 : 0]\} \\
        X_{E_4}(\mathbb{Q}) &= \{ [0: 1 : 0], \, [0 : 0 : 1] \}.
    \end{align*}
\end{theorem}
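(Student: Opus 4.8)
The three curves $X_{E_1},X_{E_2},X_{E_4}$ are smooth plane quartics, hence non-hyperelliptic curves of genus $3$, and the displayed equations are already canonical models; recall also that $X_{E_4}\cong X_{E_1}^-$, so together with \Cref{prop: no local points} these are exactly the curves left to treat. Write $X$ for one of them and $J=\operatorname{Jac}(X)$. Following the strategy of \cite{PSS07}, the plan is to combine a $2$-descent on $J$, the Chabauty--Coleman method, and a Mordell--Weil sieve. As a trivial first step one checks that each point in the claimed list lies on the corresponding curve and is smooth, so that the real content is to prove there are no others.

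The crucial input is an upper bound for $\operatorname{rank}J(\Q)$: since $g=3$, Chabauty's method applies as soon as this rank is at most $2$. To bound it we carry out a $2$-descent on $J$. For a smooth plane quartic the $28$ bitangents (equivalently, the odd theta characteristics) give an explicit description of $J[2]$ and of the descent map $J(\Q)/2J(\Q)\hookrightarrow H^1(\Q,J[2])$, so that a computation of the relevant $2$-Selmer group yields the bound; in practice we expect the three ranks to be small, indeed $\operatorname{rank}J(\Q)\le 1$ in each case. (Should some rank unexpectedly equal $3$, one would have to fall back on a finer tool, such as elliptic-curve Chabauty for a quotient map defined over a small number field, or quadratic Chabauty; we do not anticipate needing this.) In parallel, the known rational points produce explicit degree-$0$ classes in $J(\Q)$ once one of them is chosen as a base point, and we verify that, together with the torsion subgroup, these generate a subgroup of $J(\Q)$ of the rank predicted by the descent; this furnishes enough of $J(\Q)$ for the subsequent steps.

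If $\operatorname{rank}J(\Q)=0$ for a given $X$, we are essentially done: with a rational base point $P_0$, the map $X(\Q)\hookrightarrow J(\Q)$, $P\mapsto[P-P_0]$, has image inside the finite group $J(\Q)_{\mathrm{tors}}$, which we enumerate and test for effectivity, thereby recovering exactly the listed points. If instead the rank is $1$, we pass to Chabauty--Coleman: we choose a prime $p\ge 5$ of good reduction for $X$, compute (using an explicit Coleman-integration algorithm on the plane-quartic model) the at least $(g-r)=2$-dimensional space of regular differentials on $X_{\Q_p}$ that annihilate $J(\Q)$, and on each residue disk of $X(\Q_p)$ bound the number of rational points by $1$ plus the number of zeros of a chosen such differential, read off from the Newton polygon of the associated power series. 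Summing over residue disks gives an explicit upper bound for $\#X(\Q)$.

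When this bound exceeds the number of known points --- which can happen because a Chabauty differential has a spurious zero in some residue disk, or because extra residue classes survive --- we finish with a Mordell--Weil sieve: combining the reductions $X(\Q)\to X(\F_q)$ at several auxiliary primes $q$ with the group law on $J$ and the computed generators of $J(\Q)$, we show that no further rational point can reduce into the offending classes. Together with the Chabauty--Coleman bound this pins down $X(\Q)$ exactly and yields the three sets in the statement. I expect the main obstacle to be the rank bound: provably controlling $\operatorname{rank}J(\Q)$ (equivalently, exhibiting generators of a finite-index subgroup of $J(\Q)$) for these non-hyperelliptic genus-$3$ Jacobians via the bitangent $2$-descent, together with the explicit Coleman-integral computations on the quartic models; once the ranks are known to be $\le 1$, the Chabauty--Coleman and sieving steps are routine, if computationally involved.
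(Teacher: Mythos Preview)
Your outline is the right one and matches the paper's approach (2-descent on $J$, Chabauty--Coleman, Mordell--Weil sieve), but one concrete expectation is wrong and one technical choice differs from what actually works.

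\textbf{The ranks are not all $\le 1$.} In fact $\operatorname{rank}J(\Q)=1$ for $X_{E_1}$ but $\operatorname{rank}J(\Q)=2$ for $X_{E_2}$ and $X_{E_4}$. This does not invalidate your plan, since $2<g=3$ still puts you in Chabauty range; but the annihilating space of differentials is then only $1$-dimensional, and in the rank-$2$ cases you genuinely need a second divisor class to generate (up to finite index) the Mordell--Weil group. The paper obtains these from the two known rational points together with a degree-$3$ divisor $R$ constructed from flexes; it also has to pass to an equivalent divisor in one case to make the Coleman integrals computable over $\Q_p$. Your remark that ``once the ranks are known to be $\le 1$ the remaining steps are routine'' therefore understates the work: the rank-$2$ cases require finding and saturating two independent generators and computing global Coleman integrals against both.

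\textbf{The $2$-descent is not done via bitangents.} The paper follows \cite{PSS07} and uses the flexes of the plane quartic: one works in a degree-$8$ \'etale $\Q$-algebra $A$ (which here turns out to be a number field with trivial class group) and the ``fake $2$-Selmer group'' inside $A^\times/A^{\times 2}\Q^\times$. This is what makes the computation tractable; a bitangent-based descent would involve a degree-$28$ algebra and is not what is carried out. Finally, the order of the last two steps is reversed relative to your sketch: the paper first runs the Mordell--Weil sieve (with saturation checks, since only a finite-index subgroup is known) to prove that every rational point reduces modulo a chosen good prime $p$ to one of the known points, and only then applies Chabauty--Coleman on each of those residue discs to show uniqueness.
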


The proof strategy of \Cref{thm: rational points} is the same as \cite[\S 11 and 12]{PSS07}. We thus only quickly report the results of the various stages of the computation.

\subsection{Known rational points and proof of \Cref{thm: solutions twisted Fermat}}

For $i \in \{1,2,3,4\}$, define the following subsets of $X_{E_i}(\Q)$:
\begin{gather*}
    X_{E_1}(\Q)_{\operatorname{known}} = \{[0 : 1 :0]\}, \quad X_{E_2}(\Q)_{\operatorname{known}} = \{ [0 : 1 : 0], [1 : 1 : 0]\}, \quad X_{E_4}(\Q)_{\operatorname{known}}=\{[0:1:0], [0:0:1]\}, \\
    X_{E_3}(\Q)_{\operatorname{known}} = \{ [0 : 0 : 1], [1 : 1 : 1], [2 : 0 : 1], [-1 : 0 : 1] \}.
\end{gather*}
It is easy to check that $X_{E_i}(\Q)_{\operatorname{known}} \subseteq X_{E_i}(\Q)$, and
\Cref{thm: rational points} states that equality holds for $i \in \{1,2,4\}$. Rational points on $X_{E_i}$ correspond to certain elliptic curves over $\Q$, and \cite[\S 7.3]{PSS07} explains how to compute the $j$-invariant of the elliptic curve associated with a rational point on $X_{E_i}$. We find
\begin{gather*}
    \left\{ j(Q) : Q \in X_1(\Q)_{\operatorname{known}}\right\} = \{2^8 \cdot 7\}, \qquad 
    \left\{ j(Q) : Q \in X_2(\Q)_{\operatorname{known}}\right\} = \left\{2^8 \cdot 3^3 \cdot 7, \;\; \frac{2 \cdot 7 \cdot 3593^3}{(3^2 \cdot 5)^7}\right\}, \\
    \{ j(Q) : Q \in X_4(\Q)_{\operatorname{known}}\} = \left\{2^8 \cdot 7 \cdot 61^3, \;\; - \frac{2^4 \cdot 7 \cdot 1867^3}{17^7}\right\}, \\
    \{ j(Q) : Q \in X_3(\Q)_{\operatorname{known}}\} = \left\{ \frac{2^2 \cdot 253447^3}{67^7}, \;\; - \frac{2^{10} \cdot 19^3 \cdot 67^3 \cdot 2131^3}{317^7}, \;\; \frac{2^{10} \cdot 23^3}{5^7}, \;\; 2^8 \cdot 7^2\right\}.
\end{gather*}
Let $J_{\operatorname{known}}$ be the union of the sets corresponding to $X_1, X_2, X_4$.
\begin{proof}[Proof of \Cref{thm: solutions twisted Fermat} assuming \Cref{thm: rational points}]
    Combining \Cref{prop: solutions Fermat and rational points}, \Cref{prop: no local points}, \Cref{lemma: basic properties Eabc Fabc}, \Cref{thm: rational points}, and the above calculation of $j$-invariants, we find that if $(a,b,c)$ is a $(\star)$-solution of \Cref{eq: Cns}, then either $c=\pm 1$ or
    $
    j(E_{(a,b,c)}) =2^8 \cdot 7 \cdot \frac{b^3}{c^7}
    $
    lies in $J_{\operatorname{known}}$.
    
    If $c= \pm 1$, we compute the integral points on the elliptic curves $a^2+28b^3 = \pm 27$ and $a^2+196b^3 = \pm 27$ and find the solutions given in the statement of \Cref{thm: solutions twisted Fermat} (see also \Cref{rmk: small solutions}).
    
    Suppose instead that $j(E_{(a,b,c)})$ belongs to $J_{\operatorname{known}}$.    
    Since $(c, 2^8 \cdot 7 \cdot b^3)=1$ by definition of a $(\star)$-solution, we obtain that $\pm c^7$ is the denominator of one of the rational numbers in $J_{\operatorname{known}}$, and $\pm 2^8 \cdot 7 \cdot b^3$ is its numerator. This gives the possibilities $(b,c) \in (-1, - 1), (1,1), (-3, -1), (3,1), (-61, -1), (61, 1)$. It is straightforward to check that these pairs lead precisely to the solutions listed in the statement of \Cref{thm: solutions twisted Fermat}. 

    If we further assume \Cref{conj: XE3}, namely the equality $X_{E_3}(\Q)=X_{E_3}(\Q)_{\operatorname{known}}$, then a very similar argument solves \Cref{eq: C sharp}.
\end{proof}

\begin{remark}
    The $j$-invariants $\frac{2 \cdot 7 \cdot 3593^3}{(3^2 \cdot 5)^7}$ and $- \frac{2^4 \cdot 7 \cdot 1867^3}{17^7}$ do not correspond to $(\star)$-solutions of \Cref{eq: Cns}. However, they do correspond to the solutions
    \[
    (a,b,c) = (\pm 2\cdot 181 \cdot 313 \cdot 317, \; 3593, \; 90) \qquad \text{and} \qquad (a,b,c) = (\pm 2^{13} \cdot 5 \cdot 59957, \; -2^8 \cdot 1867, \; 2^4 \cdot 17)
    \]
    respectively, see also \Cref{rmk: small solutions}.
\end{remark}

\subsection{Two-descent and generators for finite-index subgroups of $J_i(\Q)$}

Let $i \in \{1, 2, 4\}$ and $X:=X_{E_i}$. We fix a known rational point $P_0$ on $X$: we take $P_0=[0:1:0]$ in all cases. For $i=2, 4$ we also let $P_1$ be the second known rational point on $X$, as listed in the statement of \Cref{thm: rational points}.

We let $J/\Q$ be the Jacobian of $X$. Our objective in this section is to compute the structure of $J(\Q)$ by determining its torsion and rank, and finding explicit generators for a finite-index subgroup.
As for the torsion subgroup, it is well known that $J(\Q)_{\operatorname{tors}}$ injects by reduction in $J(\F_p)$ for any prime $p>2$ of good reduction (see for example the appendix of \cite{MR604840}). Using $\#J(\F_p)=L(X_{\mathbb{F}_p},1)$, where $L(X_{\mathbb{F}_p},t)$ denotes the $L$-polynomial of the reduction of $X$ modulo $p$, we compute that $J(\Q)_{\operatorname{tors}}$ is trivial by showing that for a suitable, small set of primes $S_0$ we have $\operatorname{gcd} \left\{ \#J(\F_p) : {p \in S_0} \right\} = 1$. 

Next we briefly outline how we compute the rank of $J(\Q)$ and generators for a finite-index subgroup. We refer the reader to \cite[\S 11]{PSS07} for further details.
We construct a number field $K$ such that the base-change $X_{K}$ admits three flexes $T_1, T_2, T_3$ defined over $K$. These flexes have the property that the tangent line to $X$ at $T_j$ intersects $X$ at $T_j$ with multiplicity $3$ and at $T_{j+1}$ with multiplicity $1$ (indices read mod $3$). We construct a plane cubic $G$ such that $\{G=0\} \cap X = 2(T_1)+2(T_2)+2(T_3)+3(P_0)+R$, where $R$ is a $\Q$-rational divisor of degree $3$.
The coefficients of $G$ lie in a degree-8 étale $\Q$-algebra, which in every case turns out to be a degree-8 number field with trivial class group. We denote this number field by $A$. In the language of \cite[\S 11.1]{PSS07}, the fact that $A$ is a number field (equivalently, that its spectrum is connected) implies that the $\mathcal{G}_{\Q}$-set $T$ has only one orbit, of size $8$.
We work in the quotient $A^\times /  A^{\times 2}\Q^\times$ and define
\[
H := \left\{ a \in A^\times /  A^{\times 2}\Q^\times : \operatorname{Norm}_{A/\Q}(a) \in \Q^{\times 2}\right\}.
\]
Let $[U : V : W]$ be homogeneous coordinates on $\mathbb{P}^2$. By \cite[Propositions 11.4 and 11.5]{PSS07}, the function $g := \frac{G}{W^3}$ induces a group homomorphism
\begin{equation}\label{eq: descent map}
    g : \frac{J(\Q)}{2J(\Q)} \to H
\end{equation}
with kernel generated by $[R-3P_0]$. The element $[R-3P_0]$ is non-trivial in $J(\Q)/2J(\Q)$ since the $\mathcal{G}_{\Q}$-set $T$ has only orbits of even size. The image of the homomorphism \eqref{eq: descent map} lands in the \textit{fake $2$-Selmer group} $\operatorname{Sel}^2_{\operatorname{fake}}(J, \Q)$. We check that $X$ has good reduction outside $\{2, 7\}$ (this follows from the fact that $X$ is a twist of a curve with good reduction away from $7$ by a cocycle that is unramified outside $\{2,7\}$) and the Tamagawa number at $7$ is $1$. The set of primes $S$ of \cite[\S 11.1]{PSS07} can then be taken to be $\{2, \infty\}$. The group $\operatorname{Sel}^2_{\operatorname{fake}}(J, \Q)$ is explicitly computable. Imposing the local conditions at $2$ (but not at $\infty$) we find
\[
\dim_{\F_2}\operatorname{Sel}^2_{\operatorname{fake}}(J, \Q) \leq \begin{cases}
    0, \; \text{ if }i=1 \\
    1, \; \text{ if } i=2, 4.
\end{cases}
\]
Since the descent map \eqref{eq: descent map} has $1$-dimensional kernel, this shows that $\dim_{\F_2} \frac{J(\Q)}{2J(\Q)} \leq \begin{cases}
    1, \; \text{ if }i=1 \\
    2, \; \text{ if } i=2, 4.
\end{cases}$ As we already know that $J(\Q)$ has trivial torsion, we obtain the same bound for the rank of $J(\Q)$. For $i=1$, since $[R-3P_0]$ is a non-trivial element in $J(\Q)/2J(\Q)$, we obtain that $\operatorname{rk} J(\Q)=1$ and $[R-3P_0]$ is a generator of $J(\Q)$ up to finite index. For $i=2, 4$, we check that the difference of the two known rational points $[P_1-P_0]$ maps to the non-trivial element of $\#\operatorname{Sel}^2_{\operatorname{fake}}(J, \Q)$. This implies that the rank of $J(\Q)$ is $2$ and $[R-3P_0], [P_1-P_0]$ generate a finite-index subgroup.

\begin{remark}\label{rmk: rank of Jac XE3}
    Although not necessary for the proof of \Cref{thm: rational points}, we point out that we can carry out the same procedure also for $i=3$. We obtain $\operatorname{rk} J(\Q) =3$.
    Letting 
    \[
    P_0=[0,0,1], \; P_1=[1,1,1], \; P_2 =[2,0,1],\; P_3=[-1,0,1]
    \]
    be the four known rational points on $X_{E_3}$, we check that $D_1 := (P_1)-(P_0), D_2 := (P_2)-(P_0), D_3 := (P_3)-(P_0)$ generate a rank-3 subgroup of $J(\Q)$. More precisely, let $[R-3P_0]$ be as above and $D_4 := D_{4,+}-2(P_0)$, where $D_{4,+}$ is defined by $W=U^2-UV+2V^2=0$. The divisor classes $[D_1], [D_4]$ map to a basis of the 2-dimensional $\F_2$-vector space $\operatorname{Sel}^2_{\operatorname{fake}}(J, \Q)$, so $[R-3P_0], [D_1], [D_4]$ generate a finite-index subgroup of $J(\Q)$ and their classes form a basis of $J(\Q)/2J(\Q)$. We then compute
    \[
    2[R-3P_0] = 3[D_2] + 6[D_3], \quad 2[D_4] = -3 [D_1]+3[D_3] \quad \text{in }\,J(\Q),
    \]
    which shows that $[D_1], [D_2], [D_3]$ also generate a finite-index subgroup of $J(\Q)$.
\end{remark}

\subsection{Mordell-Weil sieve and saturation}

Since we have proved that for $i = 1,2,4$ the rank of the Mordell-Weil group $\operatorname{Jac}(X_{E_i})(\Q)$ is strictly smaller than the genus of $X_{E_i}$, we can in principle apply the Chabauty-Coleman method to these curves. For a suitably chosen prime $p$, we will apply this method to show that each $p$-adic disc centered at a point in $X(\Q)_{\operatorname{known}}$ contains at most one rational point -- necessarily the known one. This, however, does not yet suffice to determine $X(\Q)$: we must still check that the other $p$-adic discs contain no rational points at all. Equivalently, we need to prove that $X(\Q)$ and $X(\Q)_{\operatorname{known}}$ have the same reduction modulo $p$.
In order to achieve this, we apply the Mordell-Weil sieve.

The algorithm proceeds as follows. We let $Q_1, \dots , Q_r$ be generators of the Mordell-Weil group $J(\Q)$ and fix a rational point $P_0$ (we take $P_0=[0:1:0]$ in all cases, as in the previous paragraph). We implicitly extend $X$ and $J$ to an open subscheme of $\operatorname{Spec} \Z$, so that we can consider their reductions modulo good primes. By applying the Abel–Jacobi map based at $P_0$, we identify $X(\Q)$ with a subset of $J(\Q)$, and for each prime of good reduction $p$, we similarly view $X(\F_p)$ as a subset of $J(\F_p)$. Let $\pi_p$ be the reduction map from $J(\Q)$ to $J_{\F_p}(\F_p)$. Let $\operatorname{MW}_p$ be the subgroup of $J_{\F_p}(\F_p)$ generated by $\pi_p(Q_1), \dots, \pi_p(Q_r)$, and let $\Omega_p$ be the intersection $\operatorname{MW}_p \cap X_{\F_p}(\F_p)$. It is clear that the image of $X(\Q)$ through $\pi_p$ is contained in $\Omega_p$.
If we choose another prime $q \ne p$, we can further reduce the set $\Omega_p$ by using the fact that, for every element $a_1Q_1 + \ldots + a_rQ_r$, the projections $a_1\pi_p(Q_1) + \ldots + a_r\pi_p(Q_r)$ and $a_1\pi_q(Q_1) + \ldots + a_r\pi_q(Q_r)$  belong to $X_{\F_p}(\F_p)$ and $X_{\F_q}(\F_q)$ respectively. We then obtain that $\pi_p(X(\Q))$ is contained in the set $\Omega_p' := \{\pi_p(Q) \in \Omega_p \mid Q \in J(\Q), \ \pi_q(Q) \in \Omega_q \} \subset \Omega_p$.

\[\begin{tikzcd}
	{X(\Q)} && {X_{\F_p}(\F_p) \times X_{\F_q}(\F_q)} \\
	\\
	{J(\Q)} && {J_{\F_p}(\F_p) \times J_{\F_q}(\F_q)}
	\arrow["{\pi_p \times \pi_q}", from=1-1, to=1-3]
	\arrow[hook, from=1-1, to=3-1]
	\arrow[hook, from=1-3, to=3-3]
	\arrow["{\pi_p \times \pi_q}", from=3-1, to=3-3]
\end{tikzcd}\]
More generally, given a finite set $S$ of auxiliary primes of good reduction, we have
\[
\pi_p(X(\Q)) \subseteq \Omega'_{p, S} :=\{ \pi_p(Q) \in \Omega_p \bigm\vert Q \in J(\Q) \text{ such that } \pi_q(Q) \in \Omega_q \; \forall q \in S \}.
\]
Concretely, computing $\Omega'_{p, S}$ reduces to solving systems of congruences. Writing $Q = \sum_{i=1}^r a_i Q_i$, the condition $\pi_q(Q) \in \Omega_q$ imposes congruence conditions on the coefficients $a_i$ for each prime $q$, and we intersect these conditions as $q$ varies over $S \cup \{p\}$.
After fixing $p$, we look for a set $S$ of auxiliary primes such that $\Omega'_{p, S} = \pi_p(X(\Q)_{\operatorname{known}})$.

Unfortunately, in our case we do not know a set of generators of $J(\Q)$, but we know that $[R-3P_0]$ and $[P_1-P_0]$ generate a finite-index subgroup of $J(\Q)$ (generated only by $[R-3P_0]$ in the case $i=1$). Luckily, to run the above algorithm we simply need to know a subgroup $Z$ of $J(\Q)$ such that $\left(\prod_{q \in S \cup \{p\}}\pi_q \right)(Z)= \left(\prod_{q \in S \cup \{p\}}\pi_q \right)(J(\Q))$. The following lemma gives a sufficient condition for this equality to hold.
\begin{lemma}\label{lemma: sufficient condition projection}
    Let $A,B$ be two finite groups and let $G$ be a subgroup of $A \times B$. If $\pi_A, \pi_B$ are the projections onto $A$ and $B$ respectively, we have that
    $$[A \times B : G] \quad \text{divides} \quad [A : \pi_A(G)] \cdot [B : \pi_B(G)] \cdot \gcd(|\pi_A(G)|, |\pi_B(G)|).$$
\end{lemma}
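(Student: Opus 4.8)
The plan is to deduce the statement from Goursat's lemma, while keeping careful track of orders. First I would record the chain of subgroups $G \le \pi_A(G)\times\pi_B(G) \le A\times B$ and use the tower law together with $[A\times B:\pi_A(G)\times\pi_B(G)]=[A:\pi_A(G)][B:\pi_B(G)]$ to factor
\[
[A\times B:G] \;=\; [A:\pi_A(G)]\cdot[B:\pi_B(G)]\cdot[\pi_A(G)\times\pi_B(G):G].
\]
In view of this identity it is enough to prove that $[\pi_A(G)\times\pi_B(G):G]$ divides $\gcd\!\bigl(|\pi_A(G)|,|\pi_B(G)|\bigr)$, and for this step I may assume without loss of generality that $G$ surjects onto each factor: write $A_1:=\pi_A(G)$, $B_1:=\pi_B(G)$ and replace $A\times B$ by $A_1\times B_1$.

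Next I would introduce $A_2:=\{a\in A_1:(a,1_B)\in G\}$ and $B_2:=\{b\in B_1:(1_A,b)\in G\}$. A one-line computation — conjugating $(a,1_B)\in G$ by any element of $G$ whose first coordinate is a prescribed element of $A_1$, which exists by surjectivity — shows that $A_2\trianglelefteq A_1$, and symmetrically $B_2\trianglelefteq B_1$. Since $\{(a,1_B):a\in A_2\}$ is the kernel of the surjection $\pi_B|_G\colon G\to B_1$ and $\{(1_A,b):b\in B_2\}$ is the kernel of $\pi_A|_G\colon G\to A_1$, the first isomorphism theorem gives $|G|=|A_2|\,|B_1|=|A_1|\,|B_2|$, whence
\[
[A_1\times B_1:G]\;=\;\frac{|A_1|\,|B_1|}{|G|}\;=\;\frac{|A_1|}{|A_2|}\;=\;\frac{|B_1|}{|B_2|}\;=:\;q .
\]
The integer $q$ is the order of the quotient group $A_1/A_2$, hence divides $|A_1|$; it is also the order of $B_1/B_2$, hence divides $|B_1|$. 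Therefore $q$ divides $\gcd(|A_1|,|B_1|)=\gcd(|\pi_A(G)|,|\pi_B(G)|)$, and substituting this back into the index factorisation above finishes the proof.

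I do not expect any real obstacle here: the argument is essentially bookkeeping around Goursat's lemma, the only points requiring (minor) care being the verification that $A_2$ and $B_2$ are normal and the correct handling of the index formula. Equivalently, one can invoke Goursat's lemma directly — $G$ corresponds to an isomorphism between a common quotient $A_1/A_2\cong B_1/B_2=:Q$, and $|Q|=[A_1\times B_1:G]$ automatically divides both $|A_1|$ and $|B_1|$ — but I would probably prefer to spell out the short self-contained version above.
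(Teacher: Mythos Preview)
Your proof is correct and follows essentially the same approach as the paper: reduce to the surjective case via the tower law, then use Goursat's lemma (or equivalently the kernels of the two projections) to identify $[\pi_A(G)\times\pi_B(G):G]$ with the order of a common quotient of $\pi_A(G)$ and $\pi_B(G)$. The paper cites Goursat's lemma directly, whereas you spell out the first-isomorphism-theorem computation, but this is the same argument.
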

\begin{proof}
    Since $G$ is a subgroup of $\pi_A(G) \times \pi_B(G)$, we have 
    $$[A \times B : G] = [A \times B : \pi_A(G) \times \pi_B(G)] \cdot [\pi_A(G) \times \pi_B(G) : G],$$
    so we can assume that $\pi_A(G)=A$ and $\pi_B(G) = B$, and prove that $[A \times B : G]$ divides $\gcd(|A|, |B|)$.
    Letting $N_A := \ker \pi_B|_G$ and $N_B := \ker \pi_A|_G$, we can identify $N_A$ and $N_B$ with normal subgroups of $A$ and $B$, respectively. We have that $N_A \times N_B \subseteq G$, and by Goursat's lemma we know that the image of $G$ in $\frac{A}{N_A} \times \frac{B}{N_B}$ is the graph of an isomorphism $\frac{A}{N_A} \cong \frac{B}{N_B}$. In particular, we have $\left| \frac{A}{N_A} \right| = \left| \frac{B}{N_B} \right| = \left| \frac{G}{N_A \times N_B} \right|$, and so
    $$[A \times B : G] = \left[ \frac{A}{N_A} \times \frac{B}{N_B} : \frac{G}{N_A \times N_B} \right] = \left| \frac{A}{N_A} \right| = \left| \frac{B}{N_B} \right|.$$
    This implies that $[A \times B : G]$ must divide both $|A|$ and $|B|$, concluding the proof.
\end{proof}
We say that $Z$ is \textit{saturated} at $\ell$ if $\ell \nmid [J(\Q) : Z]$. By \Cref{lemma: sufficient condition projection}, in order to run the Mordell-Weil sieve for fixed $Z$, $p$ and $S$, we only need to know saturation at a finite set of primes, namely the divisors of $\prod_{q \in S \cup \{p\}} [J(\F_q) : \pi_q(Z)]$ and of $\gcd(|\pi_q(Z)|, |\pi_{q'}(Z)|)$ for every pair of distinct primes $q,q'$ in $S \cup \{p\}$. We now explain how to check that a subgroup $Z$ (for which we know explicit generators) is saturated at a given prime $\ell$. Consider the set
\[
Y_{\ell} := \left\{ \sum_{i=1}^r a_i Q_i \bigm\vert (a_1, \ldots, a_r) \in \{0, \ldots, \ell-1\}^r \setminus \{ (0, \ldots, 0) \} \right\}.
\]
Assume that for each $y \in Y_{\ell}$ there is a prime $s$ (of good reduction) such that $\pi_s(y)$ is not divisible by $\ell$ in the finite abelian group $J(\F_s)$. We claim that $Z$ is then saturated at $\ell$. Indeed, aiming for a contradiction, suppose that $Z$ is not saturated. Then, there exists an element $D \in J(\Q)$ such that $D \not \in Z$ but $\ell D \in Z$. Write $\ell D = \sum_{i=1}^r (\ell b_i+a_i) Q_i$, where the $b_i$ are integers and the $a_i$ are in $\{0, \ldots, \ell-1\}$. As $D$ is not in $Z$, not all $a_i$ are $0$ (here we use the fact that $J(\Q)[\ell]=\{0\}$, which we have checked in our case). In particular, the element $y := \sum_{i=1}^r a_i Q_i$ is in $Y_\ell$ and is divisible by $\ell$ in $J(\Q)$, since $y = \ell \left(D - \sum_{i=1}^r b_iQ_i\right)$. By assumption, there is some good prime $s$ for which $\pi_s(y)$ is not in $\ell J(\F_s)$. On the other hand we have
\[
\pi_s(y) = \ell \pi_s\left(D - \sum_{i=1}^r b_iQ_i\right) \in \ell J(\F_s):
\]
this contradicts the assumption, which shows that $Z$ is saturated at $\ell$, as claimed.

Thus, our algorithm to check for saturation proceeds as follows: for each $y \in Y_{\ell}$, we look for an auxiliary prime $s_{y}$ such that $\pi_{s_y}(y)$ does not lie in $\ell J(\F_{s_y})$. Finding such a prime for every $y \in Y_\ell$ confirms saturation at $\ell$. Since only finitely many primes $\ell$ are involved (and, in practice, just a few small ones), the overall task is computationally manageable.

\begin{remark}
We add some details about the practical implementation of these calculations.
\begin{enumerate}
    \item Simple arithmetic considerations show that we can restrict the set $Y_\ell$ somewhat -- by rescaling, it suffices to consider vectors $(a_1, \ldots, a_r)$ whose first non-zero entry is equal to $1$. This speeds up the computation considerably.
    \item Very often, the same prime $s_y$ works for most (if not all) elements $y \in Y_\ell$.
    \item In fact, the classes $[R-3P_0], [P_1-P_0]$ described above do not generate a saturated subgroup of $J(\Q)$ (saturation at $3$ fails for each $i=1, 2, 4$). In our computations, we find generators of a slightly larger subgroup and use this as our $Z$. We strongly suspect that this larger subgroup is all of $J(\Q)$. In any case, it was easy to test that $Z$ is saturated at all the small primes we need.
\end{enumerate}
\end{remark}

In our concrete cases, we always find a suitable prime $p$ and a set $S$ such that the above algorithms (Mordell-Weil sieve and checking for saturation) prove $\Omega_{p,S}' = \pi_p(X(\Q)_{\operatorname{known}})$. Specifically, we have the following.
\begin{itemize}
    \item For $i=1$ we choose $p=41$ and $S=\{53,71\}$. The subgroup $Z$ is saturated at every prime dividing $\prod_{q \in S \cup \{p\}} [J(\F_q) : \pi_q(Z)]$ and every $\gcd(|\pi_q(Z)|, |\pi_{q'}(Z)|)$, and the Mordell-Weil sieve shows $\pi_p(X(\Q)) = \pi_p(X(\Q)_{\operatorname{known}})$.
    %$\#J_{\F_p}(\F_p)$ and it suffices to take $S = \{53, 71\}$ to conclude.
    \item For $i=2$ we choose $p=73$. The subgroup $Z$ is saturated at every prime dividing $[J(\F_p) : \pi_p(Z)]$. In this case, we do not need to consider any other primes $q$, as the set $\Omega_p$ already coincides with $\pi_p(X(\Q)_{\operatorname{known}})$.
    \item For $i=4$ we choose $p=31$ and $S=\{3\}$. The subgroup $Z$ is saturated at every prime dividing $\prod_{q \in S \cup \{p\}} [J(\F_q) : \pi_q(Z)]$ and $\gcd(|\pi_3(Z)|, |\pi_{31}(Z)|)$, and again the Mordell-Weil sieve proves $\pi_p(X(\Q)) = \pi_p(X(\Q)_{\operatorname{known}})$.
\end{itemize}

\subsection{Determination of the rational points}

For $i=1, 2, 4$ we let respectively
\[
p=41, \quad p=73, \quad p=31.
\]
As explained above, using the Mordell-Weil sieve we have proven that for each rational point $Q \in X(\Q)$ there is a point $Q_1 \in X(\Q)_{\operatorname{known}}$ such that
\[
Q \equiv Q_1 \pmod{p}.
\]
Thus, it suffices to find the rational points in the $p$-adic discs around the known rational points $X(\Q)_{\operatorname{known}}$. We do this using the Chabauty-Coleman method.

In \cite{PSS07}, the authors find small multiples of their generators of (a finite-index subgroup of) $J(\Q)$ that give elements in the kernel of the reduction map $J(\Q) \to J(\F_p)$, which allows them to only consider Coleman integrals between points in a single residue disc. The primes $p$ we work with are somewhat larger than those used in \cite{PSS07}, so we find it easier to use recent advances in the computation of Coleman integrals to work over the whole curve (and not just a single residue disc). We now give some details.

Let $\mathcal{X} / \Z_p$ be a proper smooth model of $X/\Q_p$. Recall that we have explicit divisors $D_1$ or $D_1, D_2$ whose classes generate $J(\Q)$ up to finite index.
We compute a regular differential $\omega \in H^1(\mathcal{X},\Omega^1_{\mathcal{X}/\Z_p})$ such that
\[
\int_{[D_j]} \omega = 0
\]
for each $[D_j]$. By properties of the Coleman integral, this implies that $\int_{[D]} \omega =0$ for any $[D] \in J(\Q)$ (note that here it suffices to know that the $[D_j]$ generate a finite-index subgroup of $J(\Q)$). Fixing a basis $\omega_1, \omega_2, \omega_3$ of the regular differentials on $\mathcal{X}$ and writing $\omega=\sum_{k=1}^3 a_k\omega_k$, this is a linear system of at most two equations in the three variables $a_1, a_2, a_3$. All we need to write down these equations explicitly and solve them up to finite precision is the ability to compute $\int_{[D_j]} \omega_k$.

We perform this calculation using the implementation of Coleman integrals described in \cite{MR4136553}. We note that this implementation allows for the computation of Coleman integrals between points on the curve that are defined over $\Q_p$, but not over finite extensions. This is not a problem for the divisors $[P_1-P_0]$ (for $i=2, 4$). For the divisor $[R-3P_0]$, we proceed as follows:
\begin{enumerate}
    \item in cases $i=1, 2$, the divisor $R$ splits over $\Q_p$ as the sum of three $\Q_p$-rational points $R_1, R_2, R_3$. We simply integrate between $P_0$ and each $R_i$ and sum the results.
    \item in case $i=4$, the divisor $R$ does not split over $\Q_p$. Using Riemann-Roch we find an explicit function $f$ such that
    \[
    2R - 6P_0 = R' - 3P_0 + \operatorname{div} f
    \]
    for another explicit divisor $R'$ of degree $3$. We check that $R'$ splits as the sum of three $\Q_p$-rational points $R'_1, R'_2, R'_3$ and proceed as above, using the fact that
    \[
    \int_{[R-3P_0]} \omega = \frac{1}{2} \int_{[2R-6P_0]} \omega  = \frac{1}{2} \int_{[R'-3P_0]} \omega = \frac{1}{2} \sum_{j=1}^3 \int_{P_0}^{R'_j} \omega.
    \]
\end{enumerate}
Once an annihilating differential $\omega$ is known, we apply the Chabauty-Coleman strategy: we write down an analytic function on each residue disc whose zeroes contain the $\Q$-rational points of $X$ and, using \cite{MR4136553} again, we find in each case that these analytic functions have precisely one zero in the $p$-adic residue disc of each $Q \in X(\Q)_{\operatorname{known}}$, which must therefore be the known rational point. This concludes the proof of \Cref{thm: rational points}.

\subsection{The curve $X_{E_3}$}

We briefly describe some properties of the genus-$3$ curve $X_{E_3}$ and comment on possible strategies to determine its rational points.
\begin{enumerate}
    \item The curve $X_{E_3}$ has rational points, one of which corresponds to an actual solution to the Diophantine problem under consideration. This presumably prevents the use of local obstructions to exclude other rational points (see \cite{MR4770972} for a study of local obstructions to rational points on twists of the Klein quartic).
    \item As explained in \Cref{rmk: rank of Jac XE3}, $\operatorname{Jac}(X_{E_3})(\Q)$ has rank 3, so we cannot apply the Chabauty-Coleman method.
    \item The Mordell-Weil group $\operatorname{Jac}(X_{E_3})(\Q)$ has no torsion, making it difficult to construct étale covers of $X_{E_3}$ over $\Q$ (which could facilitate descent). We also attempted to construct an étale double cover over a number field; however, the field of definition of any bitangent has degree $28$ over $\Q$, which makes this approach computationally impractical.

    \item One can look for other torsion points of $\operatorname{Jac}(X_{E_3})$ defined over small number fields. For example, we tested that $\operatorname{Jac}(X_{E_3})(\F_p)$ has a $7$-torsion point for all primes $p \leq 500$ with $p \equiv \pm 1 \pmod{7}$. This suggests that $\operatorname{Jac}(X_{E_3})$ may have a $7$-torsion point over $\Q(\zeta_7)^+$. If true, this can probably be established rigorously by considering the mod-$7$ Galois representation of $\operatorname{Jac}(X(7))$ and the way the representation changes under twists, but it is not clear that knowing a $7$-torsion point over a cubic number field would help (the corresponding étale cover would have genus 15).
    
    \item The Jacobian $\operatorname{Jac}(X_{E_3})$ is geometrically isomorphic to the Jacobian of the Klein quartic, which splits (up to isogeny) as the cube of a CM elliptic curve. The minimal field of definition of the endomorphisms can be determined using \cite[Propositions 3.34 and 3.35]{MR3864839}; it is the splitting field over $\Q$ of the polynomial $x^8 - 2x^7 + 7x^4 - 14x^2 + 8x + 5$. This field has degree $336$, making computations over it completely impractical.
    %Unfortunately, the field of definition of the endomorphisms of $\operatorname{Jac}(X_{E_3})$ and the fields of definition of the maps from $X_{E_3}$ to elliptic curves all appear to have very large degree. These fields could be determined abstractly by considering that $\Gal(\overline{\Q}/\Q)$ acts on $\operatorname{End}(\operatorname{Jac}(X_{E_3}))$ via a twist of its action on $\operatorname{End}(\operatorname{Jac} (X(7)))$, see also \Cref{lemma: centre under twists} below. On the other hand, the computational method of \cite{MR3904148} fails, due to the very large degrees involved.
    
    \item No non-trivial endomorphism of $\operatorname{Jac}(X_{E_3})$ is defined over $\Q$. On the other hand, we have
    \[
    \operatorname{End}\left(\operatorname{Jac}(X_{E_3})_{\Q(\sqrt{-7})} \right) \otimes_{\Z} \Q \cong \Q(\sqrt{-7}).
    \]
    We show this statement, starting with the following general lemma (cf.~also \cite[Proposition 2.3]{MR1630512}).

    \begin{lemma}\label{lemma: centre under twists}
        Let $A_1, A_2$ be abelian varieties over $\Q$ that are isomorphic over $\overline{\Q}$. For $i=1, 2$, let $Z_i$ be the centre of the geometric endomorphism ring $\operatorname{End}(A_{i, \overline{\Q}})$. The action of $\operatorname{Gal}\left(\overline{\Q}/\Q \right)$ on $\operatorname{End}(A_{i, \overline{\Q}})$ stabilises $Z_i$; let $\psi_i : \operatorname{Gal}\left(\overline{\Q}/\Q \right) \to \operatorname{Aut}(\operatorname{End}(Z_i))$ be the induced map. The fixed field $F_i$ of $\ker \psi_i$ is the field of definition of the action of $Z_i$ on $A_i$. We have $F_1=F_2$.
    \end{lemma}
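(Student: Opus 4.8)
The plan is to first set up the basic Galois-semilinear picture and then reduce the statement to a clean compatibility between the isomorphism $A_{1,\overline{\Q}} \cong A_{2,\overline{\Q}}$ and the Galois actions on the two endomorphism rings. Write $R_i := \operatorname{End}(A_{i,\overline{\Q}})$. For $A_i$ defined over $\Q$, the group $G_\Q := \operatorname{Gal}(\overline{\Q}/\Q)$ acts on $R_i$ by $\sigma \cdot \varphi := \sigma\varphi\sigma^{-1}$ (transport of structure along $\sigma : A_{i,\overline{\Q}} \to A_{i,\overline{\Q}}^\sigma = A_{i,\overline{\Q}}$, using that $A_i$ is a $\Q$-model), and this action is by ring automorphisms, hence preserves the centre $Z_i$. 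Thus the map $\psi_i : G_\Q \to \operatorname{Aut}(Z_i)$ is well defined, $\ker\psi_i$ is an open subgroup, and by the Galois correspondence its fixed field $F_i$ is a well-defined number field. The first routine step is to check that $F_i$ is indeed the field of definition of the $Z_i$-action, i.e.\ the smallest field $L$ such that every element of $Z_i$, viewed as an endomorphism of $A_{i,\overline{\Q}}$, descends to an endomorphism of $A_{i,L}$: an element $\varphi \in Z_i$ is fixed by $\sigma$ precisely when it commutes with the descent datum, which (by standard descent for morphisms of abelian varieties) is exactly the condition that $\varphi$ be defined over the fixed field of $\operatorname{Stab}(\varphi)$; taking all of $Z_i$ at once gives $F_i$. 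This part is standard and I would state it briefly, citing descent.

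The heart of the matter is the comparison $F_1 = F_2$. Fix an isomorphism $f : A_{1,\overline{\Q}} \xrightarrow{\ \sim\ } A_{2,\overline{\Q}}$ over $\overline{\Q}$. Conjugation by $f$ gives a ring isomorphism $\Phi := f_* : R_1 \to R_2$, $\varphi \mapsto f \varphi f^{-1}$, which restricts to an isomorphism $Z_1 \xrightarrow{\sim} Z_2$ (centres go to centres under any ring isomorphism). The key computation is to track how $\Phi$ interacts with the two Galois actions. For $\sigma \in G_\Q$ and $\varphi \in R_1$ one computes
\[
\Phi(\sigma \cdot \varphi) = f(\sigma\varphi\sigma^{-1})f^{-1} = (f\sigma f^{-1})(f\varphi f^{-1})(f\sigma f^{-1})^{-1};
\]
writing $c_\sigma := {}^{\sigma}\!f \circ f^{-1} \in \operatorname{Aut}(A_{2,\overline{\Q}})$ for the usual twisting cocycle measuring the failure of $f$ to be defined over $\Q$ (here ${}^{\sigma}\!f$ denotes the Galois conjugate of $f$), a short manipulation gives
\[
\Phi(\sigma \cdot \varphi) = c_\sigma^{-1} \cdot \bigl(\sigma \cdot \Phi(\varphi)\bigr) \cdot c_\sigma = \operatorname{inn}(c_\sigma^{-1}) \bigl(\sigma \cdot \Phi(\varphi)\bigr),
\]
where $\operatorname{inn}$ denotes inner conjugation inside $R_2$ (here I am using that conjugation by an automorphism is an \emph{inner} automorphism of the ring $R_2$). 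The crucial point is that $\operatorname{inn}(c_\sigma^{-1})$ acts trivially on the centre $Z_2$. Therefore, for $\varphi \in Z_1$ we get exactly $\Phi(\sigma \cdot \varphi) = \sigma \cdot \Phi(\varphi)$: the isomorphism $\Phi|_{Z_1} : Z_1 \to Z_2$ is $G_\Q$-equivariant. Consequently $\ker\psi_1 = \ker\psi_2$ (an element $\sigma$ fixes $Z_1$ pointwise iff it fixes $Z_2$ pointwise, via the equivariant isomorphism), hence $F_1 = F_2$.

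The main obstacle, and the place that needs care rather than cleverness, is the cocycle bookkeeping in the displayed computation: one must be scrupulous about whether Galois acts on morphisms by $\sigma \cdot \varphi = {}^{\sigma}\varphi$ (semilinear) or by conjugation by a chosen isomorphism $A^\sigma \cong A$ coming from the $\Q$-structure, and about the direction of the cocycle ${}^{\sigma}\!f \circ f^{-1}$ versus $f^{-1} \circ {}^{\sigma}\!f$; getting a sign or an inverse wrong here is the only real risk. Everything else — that ring isomorphisms preserve centres, that inner automorphisms are trivial on the centre, that descent of morphisms of abelian varieties is governed by the obvious Galois-fixity condition, and the translation between ``field of definition of an endomorphism'' and ``fixed field of its stabiliser'' — is standard and can be invoked with a one-line justification or a citation (e.g.\ to \cite{MR1630512} for the analogous statement). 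I would therefore organise the write-up as: (1) define the actions and $\psi_i$; (2) one-paragraph identification of $F_i$ as a field of definition via descent; (3) the cocycle computation showing $\Phi|_{Z_1}$ is $G_\Q$-equivariant; (4) conclude $\ker\psi_1 = \ker\psi_2$ and hence $F_1 = F_2$.
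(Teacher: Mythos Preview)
Your proposal is correct and follows essentially the same route as the paper: fix a $\overline{\Q}$-isomorphism, conjugate to get a ring isomorphism $\Phi$ on endomorphism rings, compute that the Galois action on $\Phi(\alpha)$ differs from $\Phi({}^\sigma\alpha)$ by conjugation by the cocycle $c_\sigma = {}^\sigma f \circ f^{-1} \in \operatorname{Aut}(A_{2,\overline{\Q}})$, and observe that this inner automorphism is trivial on the centre, so $\Phi|_{Z_1}$ is Galois-equivariant and $\ker\psi_1=\ker\psi_2$. The only cosmetic difference is that the paper applies $\sigma$ to $\Phi(\alpha)$ while you apply $\Phi$ to $\sigma\cdot\varphi$; your first displayed line writing $\sigma\varphi\sigma^{-1}$ is a slight notational abuse you should clean up exactly as you flag in your final paragraph.
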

    \begin{proof}
        Fix a $\overline{\Q}$-isomorphism $\varphi : A_{1, \overline{\Q}} \to A_{2, \overline{\Q}}$. The choice of $\varphi$ induces an isomorphism
        \[
        \begin{array}{cccc}
            \Phi & : Z_1 & \to & Z_2 \\
            & \alpha & \mapsto & \varphi \circ \alpha \circ \varphi^{-1}.
        \end{array}
        \]
        The Galois action on $\Phi(\alpha)$ is given by
        \[
        \begin{aligned}
            {}^\sigma \Phi(\alpha) & = {}^\sigma \varphi \circ {}^\sigma \alpha \circ {}^\sigma \varphi^{-1} \\
            & = {}^\sigma \varphi \circ \varphi^{-1} \circ \varphi \circ {}^\sigma \alpha \circ \varphi^{-1} \circ \varphi \circ {}^\sigma \varphi^{-1} \\
            & = ( {}^\sigma \varphi \circ \varphi^{-1}) \circ \Phi({}^\sigma \alpha) \circ ( {}^\sigma \varphi \circ \varphi^{-1})^{-1};
        \end{aligned}
        \]
        since $\Phi({}^\sigma \alpha)$ is in the centre of the endomorphism ring of $A_{2, \overline{\Q}}$ and ${}^\sigma \varphi \circ \varphi^{-1}$ is an automorphism of $A_{2, \overline{\Q}}$, we obtain
        \[
        {}^\sigma \Phi(\alpha) = ( {}^\sigma \varphi \circ \varphi^{-1}) \circ \Phi({}^\sigma \alpha) \circ ( {}^\sigma \varphi \circ \varphi^{-1})^{-1} = \Phi({}^\sigma \alpha) \circ ( {}^\sigma \varphi \circ \varphi^{-1}) \circ  \circ ( {}^\sigma \varphi \circ \varphi^{-1})^{-1}= \Phi({}^\sigma \alpha),
        \]
        that is, $\Phi$ is Galois-equivariant. This implies that the subgroups of $\operatorname{Gal}(\overline{\Q}/\Q)$ fixing $\alpha$ and $\Phi(\alpha)$ are equal, for all $\alpha \in Z_1$. By taking the intersection of these subgroups over all $\alpha \in Z_1$ we obtain $\ker \psi_1 = \ker \psi_2$, that is, $F_1=F_2$ as claimed.
    \end{proof}
    We apply \Cref{lemma: centre under twists} to $A_1 = \operatorname{Jac}(X_{E_3})$ and $A_2 = \operatorname{Jac}(X(7))$. In the case of $A_2$, one knows that $\operatorname{Jac}(X(7))_{\overline{\Q}} \sim E^3$, where $E$ is an elliptic curve with CM by $\Q(\sqrt{-7})$, hence $\operatorname{End}\left( \operatorname{Jac}(X(7))_{\overline{\Q}}  \right) \otimes \Q \cong M_3(\Q(\sqrt{-7}))$.
    The centre of $\operatorname{Jac}(X(7))_{\overline{\Q}}$ is therefore an order in $\Q(\sqrt{-7})$, and it's not hard to show that its action is defined over $\Q(\sqrt{-7})$. From the lemma we obtain that $\operatorname{Jac}(X_{E_3})_{\Q(\sqrt{-7})}$ contains an order in $\Q(\sqrt{-7})$. On the other hand, by reducing modulo primes of $\Z[\frac{1+\sqrt{-7}}{2}]$ lying above 11 and 23 (both 11 and 23 are split in this ring) we conclude first that $\operatorname{Jac}(X_{E_3})_{\Q(\sqrt{-7})}$ is irreducible, and then that its endomorphism ring is an order $\mathcal{O}$ in $\Q(\sqrt{-7})$ -- see also \cite{MR3904148, MR4280568, MR3882288} for more details on using reductions to give upper bound on endomorphism rings. Since the action of $Z_1$ is defined over $\Q(\sqrt{-7})$ we must have $\mathcal{O} \subset Z_1$. On the other hand, since the action of $Z_2$ (hence of $Z_1$) is \textit{not} defined over $\Q$ we obtain that $\operatorname{End}\left(\operatorname{Jac}(X_{E_3})\right)$ is trivial as claimed.
\end{enumerate}

The rich endomorphism structure of $\operatorname{Jac}(X_{E_3})$ offers some hope that \Cref{conj: XE3} could be resolved using current or near-future techniques, such as the quadratic Chabauty method over number fields as in \cite{balakrishnan25}. 
\begin{remark}
    {The quadratic Chabauty method takes as input a \textit{Rosati-symmetric} endomorphism of $J$. Unfortunately, the restriction of the Rosati involution to the centre of $M_{3}(\Q(\sqrt{-7}))$ is complex conjugation, which means that the non-trivial endomorphism we found above is not Rosati-symmetric.}
\end{remark}
Finally, we note that, for our application to modular curves, it suffices to solve \Cref{eq: C sharp} in the special case where $c = -z^2$ for some integer $z$. Furthermore, the parameter $c$ is closely related to the denominator of the $j$-invariant of the elliptic curve $F_{(a,b,c)}$, and we have explicit formulas for the $j$-invariants of the elliptic curves corresponding to rational points on $X_{E_3}$. These additional constraints may help isolate the subset of $X_{E_3}(\Q)$ that is relevant to the determination of the rational points on $X_{ns}^\#(49)$ and $X_{sp}^\#(49)$.

\appendix\section*{Appendix}

In this appendix, we give a proof of \Cref{prop: cartan7p}. 

\begin{proof}[Proof of \Cref{prop: cartan7p}] 
    Let $p>7$ be a prime and let $E/\Q$ be an elliptic curve such that both $\operatorname{Im} \rho_{E, 7}$ and $\operatorname{Im} \rho_{E, p}$ are contained in the normaliser of a non-split Cartan.
    By \Cref{prop: cartan49 denominator} this implies that the denominator of the rational number $j(E)$ is a $7p$-th power. On the other hand, repeating the proof of \Cref{prop: norm equations} with $7p$ in place of $49$, we find that there exist coprime integers $x,y$ such that $j(E) = \frac{g(x,y)^3}{f(x,y)^7}$, where $f,g$ are homogeneous polynomials with integer coefficients and $f(x,y) = x^3 - 7x^2y + 7xy^2 + 7y^3$. It follows that there exist $k \in \{1,7,8,56\}$ and an integer $z$ such that $f(x,y) = kz^p$.
    By \Cref{cor: specific norm equations}, to every solution of $f(x,y)=kz^n$ we can attach a solution $(a,b,c) = (a,b,z^2)$ of one of the equations $a^2 + 28b^3 = -27c^p$ or $a^2 + 196b^3 = -27c^p$. Since $c=z^2$ is positive, this can be rewritten as
    \begin{equation*}
        a^2 + 27c^p = 28(-b)^3 \qquad \text{or} \qquad a^2 + 27c^p = 196(-b)^3,
    \end{equation*}
    where clearly $-b$ is positive.
    Since $p>4$, \Cref{cor: specific norm equations} also gives that the greatest common divisor $D$ of the numbers $a^2, 27c^p, 196(-b)^3$ divides $27$.
    Dividing through by $D$, we find the $abc$ triple
    \[
    \left( \frac{a^2}{D}, \frac{27 c^p}{D}, \frac{28 |b|^3}{D} \right) \quad \text{ or }\quad \left( \frac{a^2}{D}, \frac{27 c^p}{D}, \frac{196 |b|^3}{D} \right).
    \]    
    Fix $0<\varepsilon<\frac{1}{41}$. Recalling that $D \leq 27$, the $abc$ conjecture implies that there exists a constant $K$ (independent of $p, a, b, c$) such that
    \[
    \frac{28}{27} |b|^3 < K \operatorname{rad}\left( 2 \cdot3 \cdot 7 \cdot  |abc| \right)^{1+\varepsilon},
    \]
    hence
    \[
    |b|^3 < K' (|abc|)^{1+\varepsilon}
    \]
    for some absolute constant $K'$. Since $a^2 < 196|b|^3$ and $c^p < \frac{196}{27}|b|^3$,
    we obtain another absolute constant $K''$ such that
    \[
    |b|^3 < K'' |b|^{(1+\varepsilon)(1+3/2 + 3/p)} \le K'' |b|^{(1+\varepsilon)(1+3/2 + 3/7)},
    \]
    which -- given our choice of $\varepsilon$ -- implies that $|b|$ is bounded. The inequality $c^p < \frac{196}{27}|b|^3$ now implies that for $p \gg 0$ we must have $c \in \{0, 1\}$, for otherwise $c^p$ would be too large. Since the equation $f(x,y)=0$ has no solutions in coprime integers, we deduce that $c=1$, $z=\pm 1$, and therefore $f(x,y)=kz^p=\pm k$ belongs to a finite list of possibilities. For fixed $k$, these are Thue equations, each of which has finitely many solutions. This gives a finite number of possibilities for $(x,y)$ and hence for $j(E) = \frac{g(x,y)^3}{f(x,y)^7}$. Let $\{E_1, \ldots, E_m\}$ be a finite list of curves over $\Q$ realising these $j$-invariants. 
    
    Summarising, for $p$ large enough, the above argument shows that if $E$ has a non-split Cartan structure both at level $7$ and at level $p$, then $j(E) \in \{j(E_1), \ldots, j(E_m)\}$. However, each non-CM curve among $\{E_1, \ldots, E_m\}$ does not have a non-split Cartan structure for $p$ large enough (and this property is invariant under quadratic twist), so for $p$ large enough we obtain that $j(E)$ is one of the CM $j$-invariants in the list $\{j(E_1), \ldots, j(E_m)\}$.
\end{proof}

\bibliographystyle{abbrv}
\bibliography{bib}

\end{document}